\documentclass[a4paper]{amsart}


\usepackage{hyperref}
\usepackage{amsmath}
\usepackage{amssymb}
\usepackage{physics}
\usepackage{cmap}
\usepackage{dsfont}
\usepackage{bbm}
\usepackage{todonotes}
\usepackage[american]{babel}

\usepackage[]{parskip}

\theoremstyle{plain}
\newtheorem{theorem}{Theorem}[section]
\newtheorem{lemma}[theorem]{Lemma}
\newtheorem{proposition}[theorem]{Proposition}
\newtheorem{corollary}[theorem]{Corollary}
\theoremstyle{definition}
\newtheorem{definition}[theorem]{Definition}

\newtheorem{example}[theorem]{Example}
\newtheorem{assumption}[theorem]{Assumption}
\theoremstyle{remark}
\newtheorem{remark}[theorem]{Remark}


\DeclareMathOperator*{\maxgamma}{{\operatorname{max}_\gamma}}
\DeclareMathOperator*{\maxgammaone}{{\operatorname{max}_{\gamma_1}}}
\DeclareMathOperator*{\maxgammatwo}{{\operatorname{max}_{\gamma_2}}}
\DeclareMathOperator*{\sym}{sym}
\DeclareMathOperator*{\hto}{\overset{H}{\to}}

\DeclareMathOperator*{\identmatrix}{I}
\DeclareMathOperator*{\PVI}{P_{VI}}
\DeclareMathOperator*{\PCC}{P_{CC}}
\DeclareMathOperator*{\PGAMMA}{P_{\gamma}}


\title{Control in the Coefficients of an Obstacle Problem}
\author{Nicolai Simon}
\address{Universität Hamburg, MIN Faculty, Department of Mathematics, Bundesstr.~55, 20146 Hamburg, Germany}
\email{nicolai.simon@uni-hamburg.de}
\author{Winnifried Wollner}
\address{Universität Hamburg, MIN Faculty, Department of Mathematics, Bundesstr.~55, 20146 Hamburg, Germany}
\email[Corresponding author]{winnifried.wollner@uni-hamburg.de}

\date{\today}

\thanks{Funded by the Deutsche Forschungsgemeinschaft (DFG) --
  Projektnummer 314067056 within SPP 1962}

\begin{document}

\begin{abstract}
  In this work, we consider optimality conditions of an optimal control problem
  governed by an obstacle problem.
  Here, we focus on introducing a, matrix valued, control variable as the coefficients
  of the obstacle problem. As it is well known, obstacle problems can be formulated as
  a complementarity system and consequently the associated solution operator is not
  Gateaux differentiable. As a consequence, we utilize a regularization approach
  to obtain optimality conditions as the limit of optimality
  conditions of a family of
  regularized problems.

  Due to the coupling of the controlled coefficient with the gradients of the solution to
  the obstacle problem, weak convergence arguments can not be applied and we need to argue by
  $H$-convergence. We show, that, based on initial $H$-convergence, a bootstrapping argument
  can be utilized to prove strong $L^p$-convergence of the control and thus enable the passage
  to the limit in the optimality conditions.
\end{abstract}

\subjclass{49K21, 49J40}

\keywords{control in coefficients,
  obstacle problem,
  regularization,
  necessary optimality conditions,
  regularization limit}

\maketitle

\section{Introduction}
Obstacle problems are an example for variational inequalities and, as such, have been considered
by various researches, see, e.g,~\cite{KinderlehrerStampacchia:2000,Rodrigues:1987}
for an overview of some fundamental results.
From this research we know that the obstacle problem can equivalently be expressed as a
complementarity problem and further, as explained by Mignot in~\cite{Mignot:1976},
the solution operator is only directionally and not G\^ateaux differentiable. The presence of a
complementarity constraint prevents us from fulfilling standard constraint qualifications and
thus we cannot make use of standard optimality conditions. Instead, we consider alternative
stationarity concepts as discussed
in, e.g.,~\cite{ScheelScholtes:2000, SchielaWachsmuth:2011, Wachsmuth:2016}.
For the control of an obstacle by the right hand side it is possible to calculate generalized
derivatives, see,~\cite{RaulsUlbrich:2019,RaulsUlbrich:2021}. Their
results rely on monotonicity of
the active sets with respect to variations in the control.
Hence, in this work, we specifically focus on formulating Clarke-type first order limiting optimality
conditions, see~\cite{Clarke:1976}, by utilizing a regularized version of the problem, then
passing to the limit in the regularization parameter and considering the limits of the regularized
optimality conditions. Such a regularization approach is classical and has been employed for
obstacle problems without coefficient control in, e.g.,~\cite{Barbu:1984, Hintermueller:2001,MeyerRademacherWollner:2015, NeitzelWickWollner:2017,NeitzelWickWollner:2019, SchielaWachsmuth:2011}.

The novelty of this work is the introduction of a control in the coefficients of this problem,
which results in a coupling of control and state in the main part of the variational inequality.
The resulting problem statement is of particular interest in the study of inverse problems, where we
want to discover an optimal choice for the matrix parameters in a given problem without additional
requirements on the regularity of the control matrix, see~\cite{AsmannRoesch:2013, AsmannRoesch:2015, DeckelnickHinze:2011, DeckelnickHinze:2012}
for the use of such techniques in a PDE-constrained context or~\cite{P19Report2023} for an outlook on the use
of coefficient control in the context of variational inequalities.
It is known, that the limit of the product of two weakly converging sequences is not necessarily
the same as the product of their respective limits. Thus, instead of a standard weak limit
argument, we utilize $H$-convergence, see~\cite{MuratTartar:2018a, MuratTartar:2018b, Tartar:2018}, and a
variety of related results, see, e.g.,~\cite{Allaire:2002, HaslingerKocvaraLeugeringStingl:2010}
for an overview, when considering the limit. These concepts have previously been employed
for coefficient control of PDEs, see, e.g.,~\cite{DeckelnickHinze:2011, DeckelnickHinze:2012}, where
a useful projection formula for the feasible control set has been provided as well.
To the best of the authors knowledge, $H$-convergence has not been utilized for obtaining
limiting optimality conditions in the context of coefficient controls in variational inequalities.

The paper, provides the proofs for results announced in~\cite{SimonWollner:2023,P19Report2023} and is organized as follows.
In Section~\ref{sec:problem},
we establish the obstacle problem with control in the coefficients and introduce the notion
of $H$-convergence. In Section~\ref{sec:regularization}, we introduce the
regularized problem, present relevant properties and prove existence of solutions and
related necessary optimality conditions for the regularized problem.
Finally, in Section~\ref{sec:limitingcondition}, we will show the main result of the work
by utilizing a bootstrapping argument based on $H$-convergence to obtain first order limiting
optimality conditions for the obstacle problem with control in the coefficients.

\section{The Obstacle Problem with Control in the Coefficients}\label{sec:problem}
We consider an optimal control problem with a coefficient control governed by an obstacle
problem on a bounded domain $\Omega \subset \mathbb{R}^2$ that is Gröger-regular, see~\cite{Groger:1989}. A similar problem has been considered
in~\cite{P19Report2023}. It is given by
\begin{equation}\label{eq:obstacle_problem}\tag{$\PVI$}
  \begin{aligned}
    \min_{q, u} \; &J \left( q,u \right)
    = \frac{1}{2} \| u - u_d \|^2 + \frac{\alpha}{2} \left\| q \right\|^2\\
    \text{s.t. } &( q \nabla u, \nabla \left( v-u \right) ) \geq
    \left( f , v-u \right) \quad \forall v \in K \\
    &u \in K, \quad \quad \quad q \in Q^{\rm{ad}}
  \end{aligned}
\end{equation}
with $u_d \in L^2 \left( \Omega \right)$ and $f \in L^\infty(\Omega)$. The set of admissible states $K$ and
controls $Q^{\rm{ad}}$ is given by
\begin{equation*}
  \begin{aligned}
    K &= \left\{ v \in H_0^1 ( \Omega ) \,\big|\, v \geq \psi \text{
        q.e. on }\Omega \right\}\\
    \text{and } Q^{\rm{ad}} &= \left\{ q \in L^{2} \left( \Omega; \mathbb{R}_{\sym}^{2 \times 2} \right)
      \,\big|\, 0 \prec q_{\min} \identmatrix \preccurlyeq q(x) \preccurlyeq q_{\max}
      \identmatrix \text{ a.e. on }\Omega \right\}
  \end{aligned}
\end{equation*}
with given obstacle $\psi \in \mathbb{R}$ satisfying $\psi < 0$ and given bounds
$0 < q_{\min} < q_{\max} \in \mathbb{R}$ respectively.

All unspecified norms $\| \cdot \|$ and scalar products
$(\cdot,\cdot)$ are given in the respective $L^2$ spaces,
e.g. $L^2(\Omega)$, $L^2(\Omega;\mathbb{R}^2)$, $L^2(\Omega;\mathbb{R}^{2\times 2})$.
Further, $H^{-1}(\Omega)$ describes the dual space to $H^1_0 (\Omega)$ and
$\mathbb{R}^{2 \times 2}_{\text{sym}}$ is the set of of symmetric $2
\times 2$ matrices. Finally, $\curlyeqprec, \prec$ denote the
ordering of symmetric matrices, where $A \curlyeqprec B$ (or $A \prec
B$) holds iff $B-A$ is a positive semidefinite (or positive definite) matrix.

\begin{remark}
  The semidefinite cone constraints defined in $Q^{\rm{ad}}$ ensure that the
  control $q \in Q^{\rm{ad}}$ is a uniformly positive definite matrix function and the bounds
  further ensure that $q \in L^{\infty} \left( \Omega ; \mathbb{R}^{2 \times 2}_{\sym} \right)$.
  Crucially, as discussed in~\cite{P19Report2023}, one can utilize
  these bounds to show unique solvability of the obstacle problem
  \begin{equation}\label{eq:vi_obstacle_problem}
    \begin{aligned}
      ( q \nabla u, \nabla \left( v-u \right) ) \geq \left( f , v-u \right) \quad \forall v \in K.
    \end{aligned}
  \end{equation}
  for any $q \in Q^{\rm{ad}}$ and obtain the corresponding state $u
  \in K$. In addition, since
  $f \in L^\infty(\Omega)$ and $\psi \in \mathbb{R}$,
  we can apply~\cite[Chapter 5, Proposition 2.2]{Rodrigues:1987}
  to conclude that the solution satisfies $\nabla \cdot (q \nabla u)
  \in L^2(\Omega)$.
\end{remark}
Using the fact that $K - \psi$ is a convex cone, we can rewrite
problem~\eqref{eq:obstacle_problem}
using, e.g.,~\cite[Theorem~I.5.5]{KinderlehrerStampacchia:2000}, in
the equivalent complementarity formulation
\begin{equation}\label{eq:obstacle_problem_mpcc}\tag{$\PCC$}
  \begin{aligned}
    \min_{q \in Q^{\rm{ad}},u \in H_0^1 \left( \Omega \right) } \;&J \left( q,u \right) && \\
    \text{s.t. } &- \nabla \cdot \left( q \nabla u \right) = f + \lambda, &&\text{ in }H^{-1}(\Omega) \\
    &u \geq \psi && \text{ q.e. in } \Omega,\\
    &\lambda \geq 0 && \text{ in } H^{-1} \left( \Omega \right),\\
    &\left( \lambda, \psi - u \right) = 0.
  \end{aligned}
\end{equation}
Due to~\cite[Chapter~5, Proposition~2.2]{Rodrigues:1987} the
multiplier satisfies the additional regularity $\lambda \in L^2(\Omega)$.
Due to the multiplicative coupling of control and state, direct
methods to assert the existence of solutions
which rely on weak convergence of the variables in the
VI~\eqref{eq:vi_obstacle_problem} are no longer applicable.
Instead, we utilize $H$-convergence, as defined in~\cite[Section~2]{Tartar:2018}:
\begin{definition}[H-convergence]\label{def:h_convergence}
  A sequence $q_k \in Q^{\rm{ad}}$ $H$-converges to $q^H \in Q^{\rm{ad}}$
  (in symbols $q_k \hto q^H$) if
  \begin{equation*}
    \begin{aligned}
      q_k \nabla u_k \rightharpoonup q^H \nabla u  \text{ in } L^2 \left( \Omega, \mathbb{R}^2 \right)
    \end{aligned}
  \end{equation*}
  for any sequence $u_k \in H_0^1 \left( \Omega \right)$ satisfying
  \begin{equation*}
    \begin{aligned}
      u_k &\rightharpoonup  u &&\text{ in } H_0^1 \left( \Omega \right)\\
      \text{and } \nabla \cdot \left( q_k \nabla u_k \right) &\to g &&\text{ in } H^{-1} \left( \Omega \right)
    \end{aligned}
  \end{equation*}
  for some $u \in H_0^1 \left( \Omega \right)$ and $g \in H^{-1} \left( \Omega \right)$.
\end{definition}

\begin{remark}
  It should be noted, that we have chosen to define H-convergence
  based on its characteristic property stated
  in~\cite[Section~2]{Tartar:2018}, this is also often stated as a
  theorem of H-convergence for specific problems,
  e.g.,~\cite{DeckelnickHinze:2011, DeckelnickHinze:2012}. In the
  context given here, it is equivalent to the definition made in,
  e.g.,~\cite[Definition~1]{MuratTartar:2018b} or~\cite[Definition~1.2.15]{Allaire:2002}, when considering $g \in H^{-1}(\Omega)$ as the right hand side of a problem such that
  \begin{equation*}
    \begin{aligned}
      -\nabla \cdot \left( q \nabla u \right) = g
    \end{aligned}
  \end{equation*}
  which is the limit of the sequence $-\nabla \cdot \left( q_k \nabla u_k \right)$ for $k \to \infty$. This sequence can be rewritten with the corresponding right hand sides $g_k \in H^{-1}(\Omega)$, so that
  \begin{equation*}
    \begin{aligned}
      -\nabla \cdot \left( q_k \nabla u_k \right) = g_k.
    \end{aligned}
  \end{equation*}
  forms a sequence of problems as specified
  in~\cite[Section~2]{Tartar:2018}. For an explanation of how H-convergence induces this property, see,
  e.g.,~\cite[Theorem~1]{MuratTartar:2018b}. Equivalence then follows, by considering the case $g_k=g$.
\end{remark}

By using $H$-convergence the existence of solutions to problem~\eqref{eq:obstacle_problem_mpcc}
can be shown, see~\cite{P19Report2023}, allowing us to state the following existence result.
\begin{theorem}\label{thm:existence_obstacle_problem}
  There exists at least one solution of Problem~\eqref{eq:obstacle_problem_mpcc}.
\end{theorem}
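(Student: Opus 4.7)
The plan is to invoke the direct method in a topology compatible with the nonlinear coupling of coefficient and state, namely $H$-convergence. Pick a minimizing sequence $(q_k, u_k, \lambda_k)_k$ of feasible triples for \eqref{eq:obstacle_problem_mpcc}. The uniform ellipticity $q_{\min}\identmatrix \preccurlyeq q_k \preccurlyeq q_{\max}\identmatrix$, together with the variational inequality tested against $v \equiv 0 \in K$ (legitimate since $\psi < 0$) and Poincaré's inequality, yields a uniform $H_0^1$-bound on $u_k$. The regularity result \cite[Chapter~5, Proposition~2.2]{Rodrigues:1987} gives $\lambda_k \in L^2(\Omega)$; combined with the complementarity condition and the fact that $\nabla u_k = 0$ a.e.\ on the coincidence set $\{u_k = \psi\}$ (Stampacchia chain rule), it provides a uniform bound $\|\lambda_k\|_{L^\infty} \leq \|f\|_{L^\infty}$.

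Next I would extract convergent subsequences. By the $H$-compactness of the uniformly coercive class \cite[Theorem~1]{MuratTartar:2018b}, a subsequence satisfies $q_k \hto q^H$, and $q^H \in Q^{\rm{ad}}$ because the pointwise ellipticity bounds are preserved under $H$-limits. Along a further subsequence, $u_k \rightharpoonup u$ in $H_0^1(\Omega)$ and hence strongly in $L^2(\Omega)$ by Rellich-Kondrachov, while $\lambda_k \rightharpoonup \lambda$ in $L^2(\Omega)$ and strongly in $H^{-1}(\Omega)$ through the compact embedding. Consequently $-\nabla\cdot(q_k\nabla u_k) = f + \lambda_k \to f + \lambda$ strongly in $H^{-1}(\Omega)$, so Definition~\ref{def:h_convergence} applies and yields $q_k \nabla u_k \rightharpoonup q^H \nabla u$ in $L^2(\Omega;\Rbb^2)$, whence $-\nabla\cdot(q^H \nabla u) = f + \lambda$ in $H^{-1}(\Omega)$. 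The remaining conditions transfer routinely: $u \geq \psi$ q.e.\ by weak closedness of $K$, $\lambda \geq 0$ by weak-$L^2$ closedness of the positive cone, and $(\lambda, \psi - u) = 0$ via weak-times-strong $L^2$-pairing of $\lambda_k$ with $\psi - u_k$ (using that $\psi$ is constant and $u_k \to u$ strongly in $L^2$). Hence $(q^H, u, \lambda)$ is feasible for \eqref{eq:obstacle_problem_mpcc}.

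The hard part is the objective inequality $J(q^H, u) \leq \liminf J(q_k, u_k) = \inf J$. Strong $L^2$-convergence of $u_k$ handles the tracking term as an equality, but the control term demands $\|q^H\|_{L^2}^2 \leq \liminf \|q_k\|_{L^2}^2$, which is not automatic because the weak $L^2$-limit $q^\ast$ of $q_k$ in general differs from the $H$-limit $q^H$, and standard weak lower semicontinuity provides only $\|q^\ast\|_{L^2}^2 \leq \liminf \|q_k\|_{L^2}^2$. I expect this to be the main technical obstacle of the proof. The natural way out is to exploit strong $L^p$-convergence of the coefficients, which is precisely the bootstrapping program carried out in Section~\ref{sec:limitingcondition}: applied to the minimizing sequence, it forces $q^\ast = q^H$ and restores standard weak lower semicontinuity of $\|\cdot\|_{L^2}^2$. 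With this identification, $(q^H, u, \lambda)$ attains the infimum and the existence assertion follows.
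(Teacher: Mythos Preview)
Your overall strategy---direct method with $H$-convergence for the coefficient, weak $H_0^1$ convergence for the state, and weak $L^2$ convergence for the multiplier---matches the paper's route (which is only cited here but is spelled out for the regularized analogue in the proof of Theorem~\ref{thm:existence_reg_obstacle_problem}). Your derivation of the a~priori bounds and the passage to the limit in the feasibility system is correct.

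The gap is in your handling of the control cost. Appealing to the bootstrapping of Section~\ref{sec:limitingcondition} is circular as written: Theorem~\ref{thm:convergence_regularization} rests on Theorem~\ref{thm:h_limit_optimal}, whose statement already presupposes the existence of a global minimizer $(\bar q,\bar u)$ of~\eqref{eq:obstacle_problem_mpcc}---exactly what you are trying to prove. Even if one adapts that computation to a minimizing sequence of~\eqref{eq:obstacle_problem_mpcc} (which can in fact be done), the decisive input hidden inside it is the pointwise matrix comparison $q^H(x)\preccurlyeq q^\ast(x)$ between the $H$-limit and the weak-$\ast$ limit, due to Murat--Tartar (see~\cite[Theorem~5]{Tartar:2018} or~\cite[Theorem~1.2.14]{Allaire:2002}). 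But that inequality already yields the missing lower semicontinuity directly and without any strong convergence: since $0\preccurlyeq q^H\preccurlyeq q^\ast$ a.e., the ordered eigenvalues satisfy $0\le\lambda_i(q^H)\le\lambda_i(q^\ast)$ and hence $\trace\bigl((q^H)^2\bigr)\le\trace\bigl((q^\ast)^2\bigr)$ a.e., so that
\[
\|q^H\|^2\le\|q^\ast\|^2\le\liminf_{k\to\infty}\|q_k\|^2.
\]
This is precisely the $H$-lower-semicontinuity of $q\mapsto\|q\|^2$ that the paper invokes from~\cite{DeckelnickHinze:2011}, and it closes the argument with $J(q^H,u)\le\liminf_k J(q_k,u_k)=\inf J$ immediately. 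The bootstrapping machinery is needed later for the optimality conditions, not for existence.
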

\begin{proof}
  The proof to this theorem can be found in~\cite[Theorem~2.2]{P19Report2023}.
\end{proof}

\section{The Regularized Problem}\label{sec:regularization}
The control to state operator of an obstacle problem is, in general, not G\^ateaux differentiable,
see~\cite{Mignot:1976}. Therefore we will employ a regularization approach to compute
stationarity conditions for problem~\eqref{eq:obstacle_problem_mpcc} by considering the limit
points of stationarity conditions for a series of regularized problems similar to the work
of Meyer et al. in~\cite{MeyerRademacherWollner:2015}. In this section, we introduce the
regularized problems and some crucial bounds on the different variables.

We consider an optimal control problem governed by a regularized obstacle problem, given by
\begin{equation}\label{eq:reg_obstacle_problem}\tag{$\PGAMMA$}
  \begin{aligned}
    \min_{q_\gamma , u_\gamma} \quad
    &J\left( q_\gamma ,u_\gamma \right) = \frac{1}{2} \| u_\gamma - u_d \|^2
    + \frac{\alpha}{2} \left\| q_\gamma \right\|^2\\
    \text{s.t. } &- \nabla \cdot \left( q_\gamma \nabla u_\gamma \right)
    + r\left( \gamma ; u_\gamma \right) = f \quad \text{in }H^{-1}(\Omega) \\
    &u_\gamma \in H_0^1(\Omega), \quad q_\gamma \in Q^{\rm{ad}}
  \end{aligned}
\end{equation}
where $r: \mathbb{R}^{+} \times \mathbb{R} \to \mathbb{R}$ is
based on a $C^2$-approximation of a quadratic penalization of the
energy functional corresponding to~\eqref{eq:vi_obstacle_problem}, i.e.,
\begin{equation*}
  \begin{aligned}
    r\left( \gamma ; u_\gamma \right) := - \gamma \maxgamma \left( \psi - u_\gamma \right),
  \end{aligned}
\end{equation*}
with $\maxgamma(x)$ as a $C^2$-approximation of $\max \left( 0, x \right)$.

\begin{assumption}\label{rem:regularization}
  This approximation is inspired by the $C^2$-approximations
  from~\cite{KunischWachsmuth:2012a}, similar to them, we assume that
  the approximation of the maximum fulfills
  $\maxgamma : (\gamma, x) \mapsto \maxgamma (x)$ with
  $\gamma \in (0, \infty)$, $\maxgamma (x)  \geq 0$ and
  \begin{equation}\label{eq:smoothedMax_properties_leq}
    \begin{aligned}
      \maxgamma (x) \leq \max (0,x)
    \end{aligned}
  \end{equation}
  for all $x \in \mathbb{R}$. Further, we assume
  \begin{equation*}
    \begin{aligned}
      \maxgamma (x) = \max (0,x)
    \end{aligned}
  \end{equation*}
  for all $x$ with $x \geq \frac{1}{2\gamma}$ and all $x \leq
  0$. Finally, we assume that $\maxgamma\colon \mathbb{R}\rightarrow
  \mathbb{R}$ is continuously differentiable with respect to $x$,
  the derivative is bounded with
  \begin{equation}\label{eq:smoothedMax_properties_dxbound}
    \begin{aligned}
      0 \leq {\maxgamma}' (x) \leq 2
    \end{aligned}
  \end{equation}
  and the second derivative is bounded with
    \begin{equation*}
      \begin{aligned}
        0 \leq \left| {\maxgamma}^{''} (x) \right| \leq M \gamma
      \end{aligned}
    \end{equation*}
    for some constant $M > 0$.
    Finally we assume that there are constants $c_1,c_2 \geq 1$ such that
    \begin{equation}\label{eq:smoothedMax_deriv_relation}
      \begin{aligned}
        \frac{1}{c_1} {\maxgamma}' (x) x \leq \maxgamma (x) \leq c_2 {\maxgamma}' (x) x.
      \end{aligned}
    \end{equation}
\end{assumption}
\begin{example}
  For $x \in \mathbb{R}$ and $\gamma >0$, we denote with
  $m_\gamma (x): (0, \infty) \times \mathbb{R} \to \mathbb{R}$ a
  regularization of $x \mapsto \max (0,x)$ that fulfills the
  assumptions given in Remark~\ref{rem:regularization}. It is given by
  \begin{equation*}
    {\maxgamma} (x) :=
    \begin{cases} 48 \gamma^4 x^5 - 64 \gamma^3 x^4 + 24 \gamma^2 x^3
      &\text{ for } x \in \left( 0,\frac{1}{2\gamma} \right)\\
      \max (0,x) &\text{ else }
    \end{cases}
  \end{equation*}
  with derivatives
  \begin{equation*}
    {\maxgamma}^{'} (x) :=
    \begin{cases} 240 \gamma^4 x^4 - 256 \gamma^3 x^3 + 72 \gamma^2x^2
      &\text{ for } x \in \left( 0,\frac{1}{2\gamma} \right)\\
      \mathbbm{1}_{\{x > 0\}}(x) &\text{ else }
    \end{cases}
  \end{equation*}
  and
  \begin{equation*}
    {\maxgamma}^{''} (x) :=
    \begin{cases} 960 \gamma^4 x^3 - 768 \gamma^3 x^2 + 144 \gamma^2 x
      &\text{ for } x \in \left( 0,\frac{1}{2\gamma} \right)\\  0
      &\text{ else }
    \end{cases}.
  \end{equation*}
  Further, this example fulfills~\eqref{eq:smoothedMax_deriv_relation}
  with $c_1 = 7$ and $c_2 = 1$.
\end{example}
\begin{remark}
  Similar to the penalizations in,
  e.g.,~\cite{MeyerRademacherWollner:2015, SchielaWachsmuth:2011}, the resulting penalization
  \begin{equation*}
    \begin{aligned}
      H_0^1 \left( \Omega \right) \hookrightarrow  L^2 \left( \Omega \right)
      \ni u_\gamma \mapsto r \left( \gamma ; u_\gamma \right)
      \in L^2 \left( \Omega \right) \hookrightarrow  H^{-1} \left( \Omega \right)
    \end{aligned}
  \end{equation*}
  is locally Lipschitz continuous and monotone. Further, any feasible control
  is a uniformly positive definite
  and symmetric matrix function, therefore,
  given a control $q_\gamma \in Q^{\rm{ad}}$, the left-hand side
  of the PDE
  \begin{equation}\label{eq:pde_reg_obstacle_problem}
    \begin{aligned}
      - \nabla \cdot \left( q_\gamma \nabla u_\gamma \right) + r\left( \gamma ; u_\gamma \right) = f
    \end{aligned}
  \end{equation}
  is Lipschitz-continuous and monotone and coercive. Hence, the Browder-Minty theorem,
  see, e.g.,~\cite[Theorem~10.49]{RenardyRogers:2004},
  asserts, for each $q_\gamma \in Q^{\rm{ad}}$, the existence
  of a unique solution $u_\gamma \in H_0^1 ( \Omega )$.
\end{remark}

In effect, we can show that, for a given solution, the obstacle
still acts as a bound on the state,
which allows us to utilize the bounds on the control to compute
regularization independent bounds on
the gradient of the regularized state.
\begin{lemma}\label{lem:uniform_bound_l2_state}
  For every $q_\gamma \in Q^{\rm{ad}}$ the corresponding solution $u_\gamma \in H^1_0 (\Omega)$
  of~\eqref{eq:pde_reg_obstacle_problem} is bounded, with
  \begin{equation*}
    \| \nabla u_{\gamma} \| \lesssim \left\| f \right\|_{H^{-1}(\Omega)}
  \end{equation*}
  independent of $\gamma$ and $q_\gamma$. Here and throughout the
  manuscript $a\lesssim b$ is a shorthand notation for the
  inequality $a \le cb$ for a constant $c$
  independent of all relevant quantities.
\end{lemma}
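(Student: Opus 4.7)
The plan is to test the regularized equation~\eqref{eq:pde_reg_obstacle_problem} with $u_\gamma$ itself, exploiting the uniform ellipticity provided by $Q^{\rm{ad}}$ and the sign of the penalty term to obtain an estimate in which the constants do not depend on $\gamma$ or $q_\gamma$.

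Concretely, I would first use $u_\gamma\in H_0^1(\Omega)$ as a test function in~\eqref{eq:pde_reg_obstacle_problem} to obtain
\begin{equation*}
  (q_\gamma \nabla u_\gamma, \nabla u_\gamma) + \bigl(r(\gamma;u_\gamma),u_\gamma\bigr) = \langle f, u_\gamma\rangle_{H^{-1},H_0^1}.
\end{equation*}
The defining cone constraint $q_\gamma(x)\succcurlyeq q_{\min} I$ directly yields the coercivity bound $(q_\gamma \nabla u_\gamma,\nabla u_\gamma)\ge q_{\min}\|\nabla u_\gamma\|^2$, which is independent of the particular $q_\gamma\in Q^{\rm{ad}}$.

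The key step will be to show that the penalty contribution $(r(\gamma;u_\gamma),u_\gamma)$ is nonnegative. This is where the assumption $\psi<0$ enters: since $\maxgamma(x)=\max(0,x)$ for $x\le 0$, we have $\maxgamma(\psi)=0$, so $r(\gamma;0)\equiv 0$. Moreover, $v\mapsto -\gamma\maxgamma(\psi-v)$ is nondecreasing in $v$ because of~\eqref{eq:smoothedMax_properties_dxbound}, so $r(\gamma;\cdot)$ is a maximal monotone function through the origin and $r(\gamma;u_\gamma(x))u_\gamma(x)\ge 0$ pointwise a.e. Integrating gives $(r(\gamma;u_\gamma),u_\gamma)\ge 0$, so the penalty disappears from the lower bound without leaving any $\gamma$-dependent residual.

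Combining these two observations with the duality estimate $\langle f,u_\gamma\rangle\le \|f\|_{H^{-1}(\Omega)}\|u_\gamma\|_{H_0^1(\Omega)}$ and Poincaré's inequality to dominate $\|u_\gamma\|_{H_0^1(\Omega)}$ by $\|\nabla u_\gamma\|$, I can absorb one factor of $\|\nabla u_\gamma\|$ on the right-hand side and divide by the remaining one to arrive at $\|\nabla u_\gamma\|\lesssim \|f\|_{H^{-1}(\Omega)}$ with a constant depending only on $q_{\min}$ and the Poincaré constant of $\Omega$. The only subtlety worth checking carefully is the nonnegativity of the penalty integrand — everything else is a standard energy estimate — and that reduces to recording that $\psi<0$ makes $0$ feasible for the smoothed problem in the sense that $r(\gamma;0)=0$.
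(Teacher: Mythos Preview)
Your proposal is correct and follows essentially the same approach as the paper: test with $u_\gamma$, use the uniform lower bound $q_{\min}$ for coercivity, discard the penalty term via the sign observation that $\psi<0$ forces $(r(\gamma;u_\gamma),u_\gamma)\ge 0$, and conclude by the duality estimate. The only cosmetic difference is that you phrase the nonnegativity of the penalty via monotonicity through the origin, whereas the paper argues it directly from the pointwise sign $u_\gamma<\psi<0$ on the support of $r(\gamma;u_\gamma)$; these are the same observation.
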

\begin{proof}
  Using $u_\gamma$ as a test function we can utilize the lower bound $q_{\min}$ on the
  control $q_\gamma$ to estimate
  \begin{equation}
    \begin{aligned}\label{eq:p1-uniform_bound_l2_state}
      q_{\min} || \nabla u_{\gamma} ||^2  &\leq \left( q_{\gamma} \nabla u_{\gamma} , \nabla u_{\gamma} \right)\\
      &= \left(f , u_{\gamma} \right) - (r(\gamma; u_\gamma), u_\gamma).
    \end{aligned}
  \end{equation}
  Now we want to estimate $\left( r(\gamma; u_\gamma) , u_\gamma \right)$ by considering the
  regularization. Since this term is nonzero only if
  $\maxgamma \left( \gamma \left( \psi - u_\gamma \right) \right) >
  0$; and then
  $u_\gamma < \psi < 0$, we have $(r(\gamma; u_\gamma), u_\gamma) \geq 0$.
  This allows us to estimate
  \begin{equation}
    \begin{aligned}\label{eq:p2-uniform_bound_l2_state}
      \left(f , u_{\gamma} \right) - (r(\gamma; u_\gamma), u_\gamma) &\leq \left(f , u_{\gamma} \right)\\
      &\lesssim \left\| f \right\|_{H^{-1}(\Omega)} \left\| \nabla u_{\gamma} \right\|.
    \end{aligned}
  \end{equation}
  Finally combining~\eqref{eq:p1-uniform_bound_l2_state}
  and~\eqref{eq:p2-uniform_bound_l2_state} allows us to estimate
  \begin{equation*}
    q_{\min} \| \nabla u_\gamma \|^2 \lesssim \| f \|_{H^{-1}(\Omega)} \| \nabla u_\gamma \|.
  \end{equation*}
\end{proof}

Utilizing further properties of the obstacle, similar estimates can
also be achieved for the divergence and the regularization term
in~\eqref{eq:pde_reg_obstacle_problem}.
\begin{lemma}\label{lem:bounds_pde_reg_obstacle_problem}
  For every $q_\gamma \in Q^{\rm{ad}}$ and corresponding solution $u_\gamma \in H^1_0 (\Omega)$
  of~\eqref{eq:pde_reg_obstacle_problem}, we have
  $\nabla \cdot \left( q_\gamma \nabla u_\gamma \right) \in L^2 \left( \Omega, \mathbb{R}^2 \right)$.
  Further the bounds
  \begin{equation*}
    \begin{aligned}
      \| \nabla \cdot \left( q_\gamma \nabla u_\gamma \right) \| &\leq \| f \|\\
      \text{and } \quad \quad \| r ( \gamma , u_\gamma ) \| &\leq \| f \|
    \end{aligned}
  \end{equation*}
  are fulfilled independent of $\gamma$ and $q_\gamma$.
\end{lemma}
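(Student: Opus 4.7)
The plan is to exploit the regularized PDE~\eqref{eq:pde_reg_obstacle_problem} directly by squaring it in $L^2(\Omega)$. Writing the equation as $-\nabla\cdot(q_\gamma\nabla u_\gamma) + r(\gamma;u_\gamma) = f$ and taking the $L^2$-norm of both sides produces
\begin{equation*}
  \|f\|^2 = \|\nabla\cdot(q_\gamma\nabla u_\gamma)\|^2 + 2\bigl(-\nabla\cdot(q_\gamma\nabla u_\gamma),\,r(\gamma;u_\gamma)\bigr) + \|r(\gamma;u_\gamma)\|^2,
\end{equation*}
so both claimed inequalities would follow at once provided the cross term is non-negative.

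Before the identity is meaningful, I would first verify that $\nabla\cdot(q_\gamma\nabla u_\gamma)\in L^2(\Omega)$. By Assumption~\ref{rem:regularization} the approximation $\maxgamma$ is Lipschitz with $|{\maxgamma}'|\leq 2$ and vanishes on $(-\infty,0]$, which yields the pointwise bound $|\maxgamma(\psi-u_\gamma)|\leq 2|u_\gamma|$. Combined with the estimate from Lemma~\ref{lem:uniform_bound_l2_state} and the embedding $H_0^1(\Omega)\hookrightarrow L^2(\Omega)$, this places $r(\gamma;u_\gamma)\in L^2(\Omega)$; the PDE itself then gives $\nabla\cdot(q_\gamma\nabla u_\gamma) = r(\gamma;u_\gamma) - f \in L^2(\Omega)$, so $q_\gamma\nabla u_\gamma \in H(\operatorname{div},\Omega)$.

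For the sign of the cross term, the key observation is that $r(\gamma;u_\gamma)$ itself lies in $H_0^1(\Omega)$. Indeed, the chain rule yields $\nabla r(\gamma;u_\gamma) = \gamma\,{\maxgamma}'(\psi-u_\gamma)\,\nabla u_\gamma \in L^2(\Omega;\mathbb{R}^2)$, and the trace vanishes because $\psi-u_\gamma = \psi < 0$ on $\partial\Omega$ while $\maxgamma\equiv 0$ on $(-\infty,0]$. Integration by parts then gives
\begin{equation*}
  \bigl(-\nabla\cdot(q_\gamma\nabla u_\gamma),\,r(\gamma;u_\gamma)\bigr) = (q_\gamma\nabla u_\gamma,\,\nabla r(\gamma;u_\gamma)) = \gamma\int_\Omega {\maxgamma}'(\psi-u_\gamma)\,(q_\gamma\nabla u_\gamma)\cdot\nabla u_\gamma\,\mathrm{d}x \geq 0,
\end{equation*}
because ${\maxgamma}'\geq 0$ and $q_\gamma$ is pointwise positive definite. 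Dropping this non-negative term in the squared identity yields both $\|\nabla\cdot(q_\gamma\nabla u_\gamma)\|\leq\|f\|$ and $\|r(\gamma;u_\gamma)\|\leq\|f\|$ at the same time. The main subtlety is the $H_0^1$-regularity of the test function $r(\gamma;u_\gamma)$, which hinges on the constant obstacle together with the tail property $\maxgamma\equiv 0$ on $(-\infty,0]$; once that is settled, the bounds follow essentially algebraically from the squared identity.
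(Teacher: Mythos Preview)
Your proof is correct and rests on the same key ingredients as the paper's proof: the observation that $r(\gamma;u_\gamma)\in H_0^1(\Omega)$ (thanks to the constant obstacle and the vanishing of $\maxgamma$ on $(-\infty,0]$), integration by parts, and the non-negativity of the resulting quadratic form $\gamma\int_\Omega {\maxgamma}'(\psi-u_\gamma)\,(q_\gamma\nabla u_\gamma)\cdot\nabla u_\gamma\,\mathrm{d}x$. The only difference is organizational: the paper estimates $\|\nabla\cdot(q_\gamma\nabla u_\gamma)\|$ and $\|r(\gamma;u_\gamma)\|$ separately, each time pairing the quantity with itself, dropping the non-negative cross term, and applying Cauchy--Schwarz; you instead square the full identity $-\nabla\cdot(q_\gamma\nabla u_\gamma)+r(\gamma;u_\gamma)=f$ once and read off both bounds simultaneously, which is a slightly more economical packaging of the same argument.
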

\begin{proof}
   First, we note that
  \begin{equation*}
    \begin{aligned}
      \nabla \cdot \left( q_\gamma \nabla u_\gamma \right) = r ( \gamma , u_\gamma ) - f \in L^2(\Omega)
    \end{aligned}
  \end{equation*}
  since $r(\gamma, u) \in L^2(\Omega)$ and $f \in L^\infty(\Omega)$. To compute an upper bound for our control, that is independent of the regularization parameter, we proceed with
  \begin{equation*}
    \begin{aligned}
      \| \nabla \cdot \left( q_\gamma \nabla u_\gamma \right) \|^2
      &= -\left( f - r \left( \gamma , u_\gamma \right), \nabla \cdot \left( q_\gamma \nabla u_\gamma \right) \right) \\
      &= -\left( f , \nabla \cdot q_\gamma \nabla u_\gamma \right)
      + \left( \gamma \maxgamma \left( \psi - u_\gamma \right), \nabla \cdot \left( q_\gamma \nabla u_\gamma \right) \right)\\
      &= -\left( f , \nabla \cdot q_\gamma \nabla u_\gamma \right)
      + \gamma \left( \nabla \maxgamma \left( \psi - u_\gamma  \right), q_\gamma \nabla u_\gamma \right),
    \end{aligned}
  \end{equation*}
  noting that $\psi \leq u_\gamma = 0$ on $\partial \Omega$ and thus
  $\maxgamma \left( \psi - u_\gamma \right) \in H^1_0(\Omega)$.
  Since $\psi \in \mathbb{R}$, we note that $\nabla \psi = 0$.
  Adding $0 =  \gamma \left( \nabla \maxgamma \left( \psi - u_\gamma \right), q_\gamma \nabla \psi \right)$,
  we get
  \begin{equation*}
    \begin{aligned}
      -\left( f , \nabla \cdot q_\gamma \nabla u_\gamma \right)
      &+ \gamma \left( \nabla \maxgamma  \left( \psi - u_\gamma  \right), q_\gamma \nabla u_\gamma \right)\\
      &= -\left( f , \nabla \cdot q_\gamma \nabla u_\gamma \right)
      + \gamma \left( \nabla \maxgamma  \left( \psi - u_\gamma \right), q_\gamma \nabla \left( u_\gamma - \psi \right) \right)\\
      &= -\left( f , \nabla \cdot q_\gamma \nabla u_\gamma \right)
      - \gamma \left( \nabla \maxgamma  \left( \psi - u_\gamma \right), q_\gamma \nabla \left( \psi - u_\gamma \right) \right)\\
      &= -\left( f , \nabla \cdot q_\gamma \nabla u_\gamma \right)
      - \gamma \left( {\maxgamma}'(\psi-u_\gamma)\nabla \left( \psi - u_\gamma \right), q_\gamma \nabla \left( \psi - u_\gamma \right) \right)\\
      &\leq -\left( f , \nabla \cdot q_\gamma \nabla u_\gamma \right)\\
      &\leq \| f \| \| \nabla \cdot \left( q_\gamma \nabla u_\gamma \right) \|.
    \end{aligned}
  \end{equation*}
  Here, the first inequality holds because ${\maxgamma}'  \left( \psi
    - u_\gamma \right) \ge 0$  and thus
  \begin{equation*}
    \left( {\maxgamma}'(\psi-u_\gamma)  \nabla \left( \psi - u_\gamma
      \right), q_\gamma \nabla \left( \psi-u_\gamma \right) \right)
    \geq 0.
  \end{equation*}
  Now we consider the regularization and proceed analogously. With this we get
  \begin{equation*}
    \begin{aligned}
      \| r ( \gamma , u_\gamma ) \|^2 &= -\left( r(\gamma; u_\gamma), \gamma \maxgamma  \left( \psi - u_\gamma \right)\right)\\
      &= -\left( f , \gamma \maxgamma  \left( \psi - u_\gamma \right) \right)
      - \gamma \left( \nabla \cdot q_\gamma \nabla u_\gamma , \maxgamma  \left( \psi - u_\gamma \right) \right)\\
      &= -\left( f , \gamma \maxgamma \left( \psi - u_\gamma \right) \right)
      + \gamma \left( q_\gamma \nabla u_\gamma , \nabla \maxgamma  \left( \psi - u_\gamma \right) \right)\\
      &= -\left( f , \gamma \maxgamma \left( \psi - u_\gamma \right) \right)
      - \gamma \left( q_\gamma \nabla (\psi -u_\gamma) , \nabla \maxgamma  \left( \psi - u_\gamma \right) \right)\\
      &\leq \| f \| \| r(\gamma ; u_\gamma) \|.
    \end{aligned}
  \end{equation*}
\end{proof}

By the assumed regularity in the sense of Gröger, see~\cite{Groger:1989},
we obtain improved integrability for the state $u_\gamma$.
\begin{lemma}\label{lem:uniform_bound_lp_state}
  There exists some $\widehat{p} > 2$ such that for any $p \in [2,\widehat{p})$
  and every $q_\gamma \in Q^{ad}$ the corresponding solution
  $u_\gamma \in H_0^1 ( \Omega )$ of~\eqref{eq:pde_reg_obstacle_problem}
  is bounded in $W^{1,p} (\Omega)$, with
  \begin{equation*}
    \| \nabla u_\gamma \|_{L^{p}( \Omega )} \lesssim \left\| f \right\|
  \end{equation*}
  independent of $\gamma$ and $q_\gamma$.
\end{lemma}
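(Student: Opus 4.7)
The plan is to combine the $L^2$ bound on the right hand side of the regularized PDE from Lemma~\ref{lem:bounds_pde_reg_obstacle_problem} with Gröger's regularity result for second order elliptic equations with $L^\infty$ coefficients on Gröger-regular domains.

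First, I would rewrite equation~\eqref{eq:pde_reg_obstacle_problem} in the form
\begin{equation*}
  -\nabla \cdot (q_\gamma \nabla u_\gamma) = f - r(\gamma, u_\gamma) \quad \text{in } H^{-1}(\Omega),
\end{equation*}
and observe that, by Lemma~\ref{lem:bounds_pde_reg_obstacle_problem} together with $f \in L^\infty(\Omega) \hookrightarrow L^2(\Omega)$, the right hand side is bounded in $L^2(\Omega)$ by a multiple of $\|f\|$ independent of $\gamma$ and $q_\gamma$.

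Next I would invoke Gröger's theorem~\cite{Groger:1989}: since $\Omega \subset \mathbb{R}^2$ is Gröger-regular and $q_\gamma$ is a symmetric matrix coefficient uniformly bounded below and above by $q_{\min}I$ and $q_{\max}I$, there exists some exponent $\widehat{p} > 2$, depending only on $\Omega$, $q_{\min}$ and $q_{\max}$, such that for every $p \in [2, \widehat{p})$ the operator $-\nabla \cdot (q_\gamma \nabla \,\cdot\,)\colon W^{1,p}_0(\Omega) \to W^{-1,p}(\Omega)$ is a topological isomorphism whose inverse norm is bounded by a constant depending only on $q_{\min}$, $q_{\max}$, $p$, and $\Omega$, but not on the particular matrix field $q_\gamma$. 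This yields
\begin{equation*}
  \|\nabla u_\gamma\|_{L^p(\Omega)} \lesssim \|f - r(\gamma, u_\gamma)\|_{W^{-1,p}(\Omega)}.
\end{equation*}

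The remaining piece is the continuous embedding $L^2(\Omega) \hookrightarrow W^{-1,p}(\Omega)$ for $p$ close enough to $2$. In two space dimensions this follows by duality from the Sobolev embedding $W^{1,p'}_0(\Omega) \hookrightarrow L^2(\Omega)$ valid for every $p' \in (1,2]$, i.e., for every $p \in [2,\infty)$. Combining this with the previous estimate and the $L^2$-bound on $f - r(\gamma, u_\gamma)$ from Lemma~\ref{lem:bounds_pde_reg_obstacle_problem} produces
\begin{equation*}
  \|\nabla u_\gamma\|_{L^p(\Omega)} \lesssim \|f - r(\gamma, u_\gamma)\| \lesssim \|f\|,
\end{equation*}
with all implicit constants independent of $\gamma$ and $q_\gamma$, which is the claim.

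The main subtlety is ensuring that the constant in Gröger's estimate is truly uniform over $q_\gamma \in Q^{\rm{ad}}$; this is precisely the content of Gröger's result, which only requires the ellipticity and boundedness constants $q_{\min}, q_{\max}$ and the regularity of $\Omega$, and is the reason Gröger-regularity of $\Omega$ was imposed in Section~\ref{sec:problem}. Once this is in hand, everything else reduces to embedding arguments and the already established $\gamma$-independent $L^2$-bounds.
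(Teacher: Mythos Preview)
Your proposal is correct and follows essentially the same approach as the paper: freeze the nonlinearity, apply Gr\"oger's regularity result (with constants depending only on $q_{\min}$, $q_{\max}$, and $\Omega$) to obtain $\|\nabla u_\gamma\|_{L^p(\Omega)} \lesssim \|f - r(\gamma;u_\gamma)\|_{W^{-1,p}(\Omega)}$, then use the duality embedding $L^2(\Omega) \hookrightarrow W^{-1,p}(\Omega)$ together with the $L^2$ bound on $r(\gamma;u_\gamma)$ from Lemma~\ref{lem:bounds_pde_reg_obstacle_problem}. The paper proceeds identically, citing \cite[Proposition~1.2]{HerzogMeyerWachsmuth:2011} for the Gr\"oger estimate and spelling out the Sobolev exponent check for the embedding $W^{1,p'}_0(\Omega) \hookrightarrow L^2(\Omega)$.
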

\begin{proof}
  First, we note that $u_\gamma$ solves the, linear, problem
  \begin{equation*}
    -\nabla \cdot (q_\gamma \nabla u_\gamma) = f - r(\gamma;u_\gamma)
  \end{equation*}
  with frozen nonlinearity. Hence, due to Gröger regularity,
  analogous to~\cite[Proposition 1.2]{HerzogMeyerWachsmuth:2011}, we can prove
  that the state solution $u_\gamma$ is in $W^{1,p}(\Omega)$ for all
  $2 \le p < \widehat{p}$ with a $\widehat{p}$ depending on the
  domain $\Omega$ and bounds $q_{\min}, q_{\max}$, only and satisfies the estimate
  \begin{equation*}
    \| \nabla u_\gamma \|_{L^p(\Omega)} \lesssim \|f-r(\gamma;u_\gamma)\|_{-1,p},
  \end{equation*}
  where $\|\cdot\|_{-1,p}$ denotes the norm on $(W_0^{1,p'})^*$, where
  $\frac{1}{p}+\frac{1}{p'} = 1$.
  To formulate an estimate for the $W^{-1,p}(\Omega)$ norm, we consider
  the Sobolev imbedding theorem, see,
  e.g.,~\cite[Theorem~4.12]{Adams:2003},
  which allows us to conclude that $W^{1,p'}_0 (\Omega) \subset
  W^{1,\widehat{p}'}_0(\Omega) \hookrightarrow
  L^2(\Omega)$ where the last imbedding is compact, since
  $1 - \frac{2}{\widehat{p}'} = 1 - (2 - \frac{2}{\widehat{p}}) =
  \frac{2}{\widehat{p}} -1 > -1 = 0 - \frac{2}{2}$.
  By duality, we assert
  \begin{equation*}
    L^2 (\Omega) = \left(L^2(\Omega) \right)^* \subset \left( W_0^{1,p'}
      ( \Omega ) \right)^*,
  \end{equation*}
  which allows us to conclude that $\| f-  r(\gamma;u_\gamma) \|_{-1,p} \lesssim \| f- r(\gamma;u_\gamma) \|$ and we can estimate
  \begin{equation*}
    \begin{aligned}
      \| \nabla u \|_{L^p(\Omega)} &\lesssim \|f-r(\gamma;u_\gamma)\|_{-1,p}\\
      &\lesssim \|f\| + \| r(\gamma;u_\gamma) \|\\
      &\lesssim \| f \|
    \end{aligned}
  \end{equation*}
  utilizing the bound on $\| r(\gamma;u_\gamma) \|$ from
  Lemma~\ref{lem:bounds_pde_reg_obstacle_problem}.
\end{proof}

Now we want to compute a bound on the feasibility violation for the solution to this problem following techniques from~\cite[Lemma 2.3]{SchielaWachsmuth:2011}.
\begin{lemma}\label{lem:growth_cond}
  For every $q_\gamma \in Q^{\textrm{ad}}$ with corresponding solution
  $u_\gamma \in H_0^1(\Omega)$ of~\eqref{eq:pde_reg_obstacle_problem},
  it holds
  \begin{equation*}
    \begin{aligned}
      \| \max ( \psi - u_\gamma, 0)\|_{L^s(\Omega)} &\lesssim \frac{1}{\gamma} (\|f\|_{L^s(\Omega)}+1)\\
      \| \nabla \max ( \psi - u_\gamma, 0)\| &\lesssim \frac{1}{\sqrt{\gamma}} (\|f\|+1)
    \end{aligned}
  \end{equation*}
  for all $s \in [2, \infty ]$.
\end{lemma}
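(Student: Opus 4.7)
The plan is to follow the template of~\cite[Lemma 2.3]{SchielaWachsmuth:2011} by testing the regularized state equation with suitable nonlinear functions of the feasibility violation $\phi := \max(\psi - u_\gamma, 0)$. Since $\psi < 0$ and $u_\gamma = 0$ on $\partial \Omega$, we have $\phi \in H_0^1(\Omega)$ with $\nabla \phi = -\nabla u_\gamma \cdot \mathbbm{1}_{\{u_\gamma < \psi\}}$. The crucial observation is that, by Assumption~\ref{rem:regularization}, $\maxgamma(\psi - u_\gamma) = \psi - u_\gamma$ on the set $\{\psi - u_\gamma \geq \tfrac{1}{2\gamma}\}$. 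I would therefore introduce the shifted auxiliary function $\tilde\phi := \max(\psi - u_\gamma - \tfrac{1}{2\gamma}, 0)$, which is supported precisely on this set and satisfies $\maxgamma(\psi - u_\gamma) = \tilde\phi + \tfrac{1}{2\gamma}$ on $\{\tilde\phi > 0\}$.

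For the $L^s$ estimate, I would test the equation $-\nabla \cdot(q_\gamma \nabla u_\gamma) = f + \gamma \maxgamma(\psi - u_\gamma)$ with $\tilde\phi^{s-1}$. The principal part contributes
\begin{equation*}
(q_\gamma \nabla u_\gamma, \nabla(\tilde\phi^{s-1})) = -(s-1)\int_{\{\tilde\phi > 0\}} \tilde\phi^{s-2}\, q_\gamma \nabla u_\gamma \cdot \nabla u_\gamma \, dx \leq -q_{\min}(s-1)\int \tilde\phi^{s-2}|\nabla \tilde\phi|^2\, dx \leq 0,
\end{equation*}
which can be dropped. The penalty term gives $\gamma \int \maxgamma(\psi - u_\gamma)\tilde\phi^{s-1} \geq \gamma \|\tilde\phi\|_{L^s}^s$, and the right-hand side is estimated via Hölder's inequality as $|(f, \tilde\phi^{s-1})| \leq \|f\|_{L^s} \|\tilde\phi\|_{L^s}^{s-1}$. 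Dividing yields $\|\tilde\phi\|_{L^s} \leq \|f\|_{L^s}/\gamma$, and the bound on $\phi$ follows from the pointwise inequality $\phi \leq \tilde\phi + \tfrac{1}{2\gamma}$ together with $\|\tfrac{1}{2\gamma}\|_{L^s(\Omega)} = |\Omega|^{1/s}/(2\gamma)$. The case $s = \infty$ is obtained by passing to the limit $s \to \infty$ and using $\|f\|_{L^s} \to \|f\|_{L^\infty}$.

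For the gradient estimate, I would test the equation directly with $\phi$ itself. The principal part is now bounded above by $-q_{\min}\|\nabla \phi\|^2$, the penalty contribution $\gamma(\maxgamma(\psi - u_\gamma), \phi) \geq 0$ may be dropped, and the right-hand side is absorbed by $|(f, \phi)| \leq \|f\|\,\|\phi\|$. Inserting the $L^2$ estimate from the previous step gives $q_{\min}\|\nabla \phi\|^2 \lesssim \|f\|(\|f\| + 1)/\gamma$, which is the claimed bound.

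The main technical obstacle is to justify that $\tilde\phi^{s-1}$ is an admissible test function in $H_0^1(\Omega)$ for $s > 2$; this is not automatic but follows from Lemma~\ref{lem:uniform_bound_lp_state}, which yields $u_\gamma \in W^{1,p}(\Omega)$ for some $p > 2$ and hence, by Sobolev embedding in two spatial dimensions, $\tilde\phi \in L^\infty(\Omega)$, so that the chain rule for compositions with locally Lipschitz functions applies. A secondary subtlety is the careful identification of $\nabla \phi$ and $\nabla \tilde\phi$ as cutoffs of $\nabla u_\gamma$, which relies on the standard Stampacchia-type result that truncations at a constant level lie in $H^1$ with gradient vanishing outside the corresponding level set.
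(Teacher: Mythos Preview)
Your proposal is correct and follows essentially the same strategy as the paper: test the regularized equation with a power of the feasibility violation, exploit the sign of the elliptic term via $\nabla\phi=-\nabla u_\gamma$ on the active set, and justify the test function $\phi^{s-1}\in H^1_0(\Omega)$ through the $W^{1,p}$ regularity of Lemma~\ref{lem:uniform_bound_lp_state}. The only cosmetic difference is that you absorb the smoothing gap by shifting to $\tilde\phi=\max(\psi-u_\gamma-\tfrac{1}{2\gamma},0)$, whereas the paper tests with $\max(\psi-u_\gamma,0)^{s-1}$ directly and compensates via $\max(\psi-u_\gamma,0)\le \maxgamma(\psi-u_\gamma)+\tfrac{1}{2\gamma}$, arriving at $\|\tfrac{1}{2}-f\|_{L^s}$ on the right; both routes yield the same bound and the gradient estimate is handled identically.
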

\begin{proof}
  First, we note that by construction of $\maxgamma$ we have
  \begin{equation*}
    \begin{aligned}
      \max \left( \psi - u_\gamma , 0 \right) \leq \maxgamma \left( \psi - u_\gamma \right) + \frac{1}{2 \gamma}.
    \end{aligned}
  \end{equation*}
  Further, $u_\gamma$ is bounded in $W^{1,p}(\Omega)$, as proven in
  Lemma~\ref{lem:uniform_bound_lp_state}. Hence, we infer that $\max \left( \psi -
    u_\gamma , 0 \right) \in W^{1,p}_0(\Omega)$. Since $p>2$, we know
  that $W^{1,p}(\Omega)$ is a commutative Banach-Algebra~\cite[Theorem
  4.39]{Adams:2003} and thus we have $\max \left( \psi -
    u_\gamma , 0 \right)^{s-1} \in W^{1,p}_0$ as well.

  This allows us to estimate, for $s< \infty$,
  \begin{equation*}
    \begin{aligned}
      \gamma \| \max (\psi - u_\gamma , 0) \|^s_{L^s(\Omega)} &\leq \left( \gamma \maxgamma \left( \psi - u_\gamma \right) + \frac{1}{2} , \max \left( \psi - u_\gamma , 0 \right)^{s-1}  \right)\\
      &= \left( - \nabla \cdot \left( q_\gamma \nabla u_\gamma \right) - f + \frac{1}{2} , \max \left( \psi - u_\gamma , 0 \right)^{s-1}  \right)\\
      &\leq \left( \frac{1}{2} - f , \max \left( \psi - u_\gamma , 0 \right)^{s-1}  \right)\\
      &\leq \left\| \frac{1}{2} - f \right\|_{L^s(\Omega)} \left\| \max \left( \psi - u_\gamma , 0 \right)^{s-1}  \right\|_{L^{s'}(\Omega)}\\
     &= \left\| \frac{1}{2} - f \right\|_{L^s(\Omega)} \left\| \max \left( \psi - u_\gamma , 0 \right)  \right\|^{s-1}_{L^s(\Omega)}
    \end{aligned}
  \end{equation*}
  where $\frac{1}{s}+\frac{1}{s'}=1$.
  Note that the second inequality holds since $\nabla \psi = 0$, which allows us to estimate

  \begin{equation*}
    \begin{aligned}
      \big( - \nabla \cdot \left( q_\gamma \nabla u_\gamma \right), &\max \left( \psi - u_\gamma , 0 \right)^{s-1}  \big)\\
      &= \left( q_\gamma \nabla u_\gamma , \nabla \max \left( \psi - u_\gamma , 0 \right)^{s-1}  \right)\\
      &= \left( - q_\gamma \nabla \left( \psi - u_\gamma \right) , \nabla \max \left( \psi - u_\gamma , 0 \right)^{s-1}  \right)\\
      &= \left( - q_\gamma \nabla \max\left( \psi - u_\gamma,0 \right) , \nabla \max\left( \psi - u_\gamma,0 \right)^{s-1} \right)\\
      &\leq 0,
    \end{aligned}
  \end{equation*}
  where we used positive definiteness of $q$ in the last estimate.

  Finally dividing by $\gamma$ and $\left\| \max \left( \psi -
      u_\gamma , 0 \right)  \right\|^{s-1}_{L^s(\Omega)}$ gives the
  first inequality for $s < \infty$.
  Noting that the constant is independent of $s$, we can pass to the
  limit $s \to \infty$.

  For the second estimate, we consider
  \begin{equation*}
    \begin{aligned}
      q_{\min} \| \nabla \max (\psi - u_\gamma , 0) \|^2 &= q_{\min} \left( \nabla \max (\psi - u_\gamma, 0) , \nabla \max (\psi - u_\gamma , 0) \right)\\
      &= q_{\min} \left( \nabla (\psi - u_\gamma) , \nabla \max (\psi - u_\gamma , 0) \right)\\
      &\leq \left( q_\gamma \nabla (\psi - u_\gamma) , \nabla \max (\psi - u_\gamma , 0) \right)\\
      &= - \left( q_\gamma \nabla u_\gamma , \nabla \max (\psi - u_\gamma , 0) \right)\\
      &= - \left( f + \gamma \maxgamma (\psi - u_\gamma) , \max (\psi - u_\gamma , 0) \right)\\
      &\leq - \left( f + \gamma \max (\psi - u_\gamma, 0) - \frac{1}{2}, \max (\psi - u_\gamma , 0) \right)\\
      &= \left( \frac{1}{2} -f, \max (\psi - u_\gamma , 0) \right) - \gamma \left\| \max (\psi - u_\gamma, 0) \right\|^2\\
      &\leq \left\| \frac{1}{2} - f \right\| \left\| \max (\psi - u_\gamma , 0) \right\|.
    \end{aligned}
  \end{equation*}
  Thus the second estimate is shown.
\end{proof}

For this regularized problem we apply the notion of $H$-convergence, as used
in~\cite{P19Report2023}, to prove the existence of an optimal solution
to the optimal control problem.

\begin{theorem}\label{thm:existence_reg_obstacle_problem}
  There exists at least one solution for~\eqref{eq:reg_obstacle_problem}.
\end{theorem}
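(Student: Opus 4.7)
The plan is to follow the direct method of the calculus of variations, closely paralleling the proof of Theorem~\ref{thm:existence_obstacle_problem} given in~\cite{P19Report2023}. I would start by fixing a minimizing sequence $(q_n, u_n)_{n \in \Nbb}$ with $q_n \in Q^{\rm{ad}}$ and $u_n \in H_0^1(\Omega)$ the unique associated solution of~\eqref{eq:pde_reg_obstacle_problem} (existence and uniqueness for each fixed $q_n$ follows from Browder--Minty, as discussed in the remark preceding Lemma~\ref{lem:uniform_bound_l2_state}). The admissibility constraint gives $\|q_n\|_{L^\infty} \leq q_{\max}$, while Lemmas~\ref{lem:uniform_bound_l2_state}, \ref{lem:bounds_pde_reg_obstacle_problem}, and~\ref{lem:uniform_bound_lp_state} yield uniform bounds on $u_n$ in $W^{1,p}(\Omega)$ for some $p>2$, and on $\nabla \cdot (q_n \nabla u_n)$ and $r(\gamma, u_n)$ in $L^2(\Omega)$.

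Next, by the Murat--Tartar compactness of $H$-convergence on the uniformly bounded and uniformly elliptic admissible set, I would extract a subsequence with $q_n \hto q^H \in Q^{\rm{ad}}$. Simultaneously, reflexivity yields $u_n \rightharpoonup u$ in $H_0^1(\Omega)$, and by Rellich $u_n \to u$ strongly in $L^2(\Omega)$. Lipschitz continuity of $r(\gamma, \cdot)\colon L^2(\Omega) \to L^2(\Omega)$ then gives $r(\gamma, u_n) \to r(\gamma, u)$ strongly in $L^2$, and a fortiori in $H^{-1}(\Omega)$. Rewriting the state equation as $-\nabla \cdot (q_n \nabla u_n) = f - r(\gamma, u_n)$, the right-hand side converges strongly in $H^{-1}$ to $f - r(\gamma, u)$, which triggers the defining property of $H$-convergence in Definition~\ref{def:h_convergence} and gives $q_n \nabla u_n \rightharpoonup q^H \nabla u$ in $L^2(\Omega;\mathbb{R}^2)$. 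Taking divergence (continuous from $L^2$ to $H^{-1}$) shows that the limit pair $(q^H, u)$ satisfies~\eqref{eq:pde_reg_obstacle_problem} and is therefore feasible.

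The main obstacle will be the lower semicontinuity of $J$ at the $H$-limit $(q^H, u)$. The tracking term passes by strong $L^2$-convergence of $u_n$. For the control term, extracting a further weakly convergent subsequence $q_n \rightharpoonup q^*$ in $L^2(\Omega)$ would yield $\|q^*\|^2 \leq \liminf \|q_n\|^2$ by weak lower semicontinuity of the norm; however, the weak and $H$-limits do not coincide in general, so this does not directly bound $\|q^H\|^2$. Following~\cite{P19Report2023}, the plan is to reconcile the two limits by exploiting the monotonicity of $r(\gamma, \cdot)$ and the uniform ellipticity of $q_n$: testing~\eqref{eq:pde_reg_obstacle_problem} with $u_n - u$, combined with the weak convergence $q_n \nabla u_n \rightharpoonup q^H \nabla u$, yields the energy identity $(q_n \nabla u_n, \nabla u_n) \to (q^H \nabla u, \nabla u)$, which together with $q_n \succcurlyeq q_{\min}\identmatrix$ forces $\nabla u_n \to \nabla u$ strongly in $L^2$ and provides enough control to identify $q^* = q^H$. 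Once this identification is in place, $J(q^H, u) \leq \liminf J(q_n, u_n) = \inf J$, and hence $(q^H, u)$ is the sought minimizer of~\eqref{eq:reg_obstacle_problem}.
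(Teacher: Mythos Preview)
Your argument is correct and matches the paper through the feasibility of the limit pair $(q^H,u)$. The gap is in the lower-semicontinuity step for the control cost. The energy identity $(q_n\nabla u_n,\nabla u_n)\to(q^H\nabla u,\nabla u)$ is valid, but it does \emph{not} force $\nabla u_n\to\nabla u$ strongly: expanding $(q_n\nabla(u_n-u),\nabla(u_n-u))$ and passing to the limit term by term yields
\[
\limsup_{n\to\infty}\, q_{\min}\|\nabla(u_n-u)\|^2 \;\le\; \bigl((q^*-q^H)\nabla u,\nabla u\bigr),
\]
and the right-hand side is nonnegative (since $q^H\preccurlyeq q^*$ by the Murat--Tartar ordering) but in general strictly positive. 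Even granting strong convergence of $\nabla u_n$, combining it with $q_n\nabla u_n\rightharpoonup q^H\nabla u$ and $q_n\rightharpoonup q^*$ would only give $q^H\nabla u=q^*\nabla u$ a.e., i.e., agreement of the two matrix limits along the single direction $\nabla u(x)$; this does not imply $q^H=q^*$. Standard homogenization examples (periodic laminates) show that $q^H\neq q^*$ is the generic situation, so the identification you aim for is simply not available at this stage.

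The paper sidesteps this by invoking $H$-lower-semicontinuity of $q\mapsto\|q\|^2$ directly, citing~\cite{DeckelnickHinze:2011}. The underlying mechanism (spelled out later in the proof of Theorem~\ref{thm:convergence_regularization}) is precisely the ordering $q^H\preccurlyeq q^*$: it gives $\operatorname{tr}\bigl(q^H(q^*-q^H)\bigr)\ge 0$ pointwise, hence $\|q^H\|^2\le(q^H,q^*)\le\|q^H\|\,\|q^*\|$, and therefore $\|q^H\|\le\|q^*\|\le\liminf_n\|q_n\|$ by weak lower semicontinuity of the $L^2$ norm. This is all that is needed; no identification $q^H=q^*$ is required (and indeed that identification is only obtained \emph{after} optimality of $q^H$ has been established, via the bootstrapping in Theorem~\ref{thm:convergence_regularization}).
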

\begin{proof}
  We proceed similarly to the proof of the unregularized
  problem~\eqref{eq:obstacle_problem_mpcc} in~\cite[Theorem~2.2]{P19Report2023}.

  The objective function is bounded from below, so we can select a minimizing sequence
  $\left( q_k , u_k \right) \in Q^{\rm{ad}} \times H_0^1 \left( \Omega \right)$ satisfying the
  constraints in~\eqref{eq:reg_obstacle_problem}.
  The set $Q^{\rm{ad}}$ is sequentially compact with respect to $H$-convergence,
  see~\cite[Theorem~1.2.16]{Allaire:2002}, and the resulting convergence preserves the symmetry
  of the matrix, see \cite[Lemma~1.3.10]{Allaire:2002}, so we can select a subsequence, again
  denoted $q_k$, that $H$-converges to some limit $q^H$, i.e., $q_k \overset{H}{\to} q^H$.

  Since this sequence of $q_k$ is $H$-convergent, we show that the
  conditions for weak convergence
  of $q_k \nabla u_k$ are fulfilled. First, because the derivative $\nabla u_k$ is uniformly
  bounded, see Lemma~\ref{lem:uniform_bound_l2_state},
  we can conclude that we can find a subsequence again denoted $u_k$ that converges weakly,
  $u_k \rightharpoonup \bar{u}$, in $H^1_0 (\Omega)$. Secondly,
  the divergence, $ \nabla \cdot (q_k \nabla u_k)$, is bounded,
  see Lemma~\ref{lem:bounds_pde_reg_obstacle_problem}, so it converges weakly,
  up to selecting a subsequence, to some function $g$, i.e.,
  \begin{equation*}
    \begin{aligned}
      \nabla \cdot \left( q_k \nabla u_k \right) \rightharpoonup g \text{ in } L^2 \left( \Omega, \mathbb{R}^2  \right).
    \end{aligned}
  \end{equation*}
  The compactness of the imbedding then implies strong convergence in $H^{-1}(\Omega)$.
  Now, using $H$-convergence as in Definition~\ref{def:h_convergence}
  asserts the weak convergence
  \begin{equation*}
    \begin{aligned}
      q_k \nabla u_k \rightharpoonup q^H \nabla \bar{u} \text{ in } L^2 \left( \Omega, \mathbb{R}^2  \right).
    \end{aligned}
  \end{equation*}
  It remains to prove convergence of the regularization $r(\gamma; u_k)$.
  Since we only consider convergence for $k \to \infty$ with a
  given $\gamma$, convergence up to a subsequence follows immediately
  from strong $L^2(\Omega)$ convergence of $u_k$ and the, at most,
  linear growth of $r(\gamma; x)$.

  Finally, we note that the objective is lower-semicontinuous by definition in the first term
  and from~\cite{DeckelnickHinze:2011}, we know, that the second term is
  $H$-lower-semicontinuous. So we can infer the existence of a solution to the regularized
  optimal control problem~\eqref{eq:reg_obstacle_problem}.
\end{proof}
Finally we consider optimality conditions for this problem as have been considered in, e.g.,~\cite{DeckelnickHinze:2011,DeckelnickHinze:2012}.
\begin{proposition}\label{prop:opt_conditions_regularized}
  Let $\left( \bar{q}_{\gamma} , \bar{u}_{\gamma}\right) \in Q^{\rm{ad}} \times H^1_0(\Omega)$ be a
  local minimum of~\eqref{eq:reg_obstacle_problem}. Then there exists
  $\bar{p}_{\gamma} \in H_0^1(\Omega)$
  such that
  \begin{subequations}
    \label{eq:opt_conditions_regularized_All}
    \begin{align}
      - \nabla \cdot \left( \bar{q}_\gamma \nabla \bar{u}_{\gamma} \right)
      &= f - r ( \gamma, \bar{u}_{\gamma})& & \text{in }H^{-1}(\Omega), \label{eq:opt_conditions_regularized:1}\\
      - \nabla \cdot \left( \bar{q}_{\gamma} \nabla \bar{p}_{\gamma} \right)
      &= \bar{u}_{\gamma} - u_d- \partial_u r(\gamma; \bar{u}_{\gamma}) \bar{p}_{\gamma}& & \text{in }H^{-1}(\Omega), \label{eq:opt_conditions_regularized:2}\\
      \left( \alpha \bar{q}_\gamma - \nabla \bar{u}_\gamma \otimes \nabla \bar{p}_\gamma \right) (q - \bar{q}_\gamma)
      &\geq 0 &&\forall q \in Q^{\rm{ad}} \label{eq:opt_conditions_regularized:3}
    \end{align}
  \end{subequations}
  with $\nabla \bar{u}_\gamma \otimes \nabla \bar{p}_\gamma$
  denoting the outer product of $\nabla \bar{u}_\gamma$ and $\nabla \bar{p}_\gamma$.
\end{proposition}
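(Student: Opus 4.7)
The plan is to use the reduced formulation $j(q) := J(q, S(q))$, where $S\colon Q^{\rm{ad}} \to H_0^1(\Omega)$ is the control-to-state map sending $q_\gamma$ to the unique solution $u_\gamma$ of~\eqref{eq:pde_reg_obstacle_problem}, and to derive the optimality conditions via the adjoint calculus. Viewing $Q^{\rm{ad}}$ as a subset of $L^\infty(\Omega;\mathbb{R}^{2\times 2}_{\sym})$, I would first show that $S$ is Fréchet (in fact Gâteaux suffices) differentiable by applying the implicit function theorem to $e(q,u) := -\nabla\cdot(q\nabla u) + r(\gamma;u) - f$. The partial derivative $\partial_u e(q,u)h = -\nabla\cdot(q\nabla h) + \partial_u r(\gamma;u) h$ is an isomorphism $H_0^1(\Omega) \to H^{-1}(\Omega)$ by Lax--Milgram, using the lower bound $q \succcurlyeq q_{\min}\identmatrix$ together with $\partial_u r(\gamma;u) = \gamma\maxgamma'(\psi-u) \geq 0$ coming from Assumption~\ref{rem:regularization}. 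Since $r$ is $C^2$ in $u$ by the same assumption and $e$ depends linearly on $q$, the IFT gives that $\delta u := S'(\bar q_\gamma)\delta q$ solves the linearized state equation
\begin{equation*}
  -\nabla\cdot(\bar q_\gamma \nabla \delta u) + \partial_u r(\gamma;\bar u_\gamma)\,\delta u
  = \nabla\cdot(\delta q\,\nabla \bar u_\gamma) \quad\text{in } H^{-1}(\Omega).
\end{equation*}

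Next I would define the adjoint state $\bar p_\gamma\in H_0^1(\Omega)$ as the unique solution to~\eqref{eq:opt_conditions_regularized:2}, whose existence and uniqueness follow again from Lax--Milgram for the same reasons (the bilinear form is coercive since $\bar q_\gamma$ is uniformly positive definite and symmetric and $\partial_u r(\gamma;\bar u_\gamma)\geq 0$, and $\bar u_\gamma - u_d \in L^2(\Omega) \hookrightarrow H^{-1}(\Omega)$). Testing the adjoint equation against $\delta u$ and the linearized state equation against $\bar p_\gamma$ and subtracting the two identities yields
\begin{equation*}
  (\bar u_\gamma - u_d,\delta u) = -(\delta q\,\nabla \bar u_\gamma, \nabla \bar p_\gamma).
\end{equation*}

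Combining this with $j'(\bar q_\gamma)\delta q = \alpha(\bar q_\gamma,\delta q) + (\bar u_\gamma - u_d,\delta u)$, I obtain
\begin{equation*}
  j'(\bar q_\gamma)\delta q = (\alpha \bar q_\gamma - \nabla \bar u_\gamma\otimes\nabla \bar p_\gamma, \delta q),
\end{equation*}
where the identification $(\delta q\,\nabla\bar u_\gamma,\nabla\bar p_\gamma) = (\nabla\bar u_\gamma\otimes\nabla\bar p_\gamma, \delta q)$ uses the symmetry of $\delta q = q - \bar q_\gamma$ so that only the symmetric part of the outer product contributes. Since $Q^{\rm{ad}}$ is a closed convex subset of $L^2(\Omega;\mathbb{R}^{2\times 2}_{\sym})$ and $\bar q_\gamma$ is a local minimizer of the $C^1$ reduced functional $j$, the standard first-order necessary condition $j'(\bar q_\gamma)(q-\bar q_\gamma)\geq 0$ for all $q\in Q^{\rm{ad}}$ gives~\eqref{eq:opt_conditions_regularized:3}.

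I expect the main obstacle to be the verification of differentiability of $S$ rather than the algebraic manipulations: one needs to argue that $Q^{\rm{ad}}$ admits admissible directions $\delta q$ (i.e.\ that the IFT perturbation stays inside or at least that difference quotients are controlled), and that $\partial_u r(\gamma;\cdot)$ maps $L^\infty$ into something yielding the required continuity of $\partial_u e$. However, since $\maxgamma$ is $C^2$ with uniformly bounded derivatives (up to the factor $\gamma$, which is fixed in this statement), the Nemytskii operator $u\mapsto r(\gamma;u)$ and its derivative are continuously differentiable between the relevant spaces, making this step routine. Everything else reduces to Lax--Milgram and duality, and the convex structure of $Q^{\rm{ad}}$ delivers the variational inequality form directly.
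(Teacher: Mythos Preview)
Your proposal is correct and follows essentially the same route as the paper: reduce to $j(q)=J(q,S(q))$, obtain differentiability of $S$ from the implicit function theorem using that $\partial_u e$ is an isomorphism, define the adjoint $\bar p_\gamma$ by the same linear equation, and combine the linearized state and adjoint equations by cross-testing to represent $j'(\bar q_\gamma)$ as $(\alpha\bar q_\gamma-\nabla\bar u_\gamma\otimes\nabla\bar p_\gamma,\,\cdot\,)$, from which the variational inequality on the convex set $Q^{\rm ad}$ follows. The only notable difference is that the paper sets up $S$ as a map into $W^{1,p}_0(\Omega)$ (using the Gr\"oger regularity of Lemma~\ref{lem:uniform_bound_lp_state}) so that the embedding into $L^\infty(\Omega)$ makes G\^ateaux differentiability of the Nemytskii operator $u\mapsto r(\gamma;u)$ immediate, whereas you work directly in $H^1_0(\Omega)$; both choices are viable in two dimensions.
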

\begin{proof}
  Let $(\bar{q}_\gamma, \bar{u}_\gamma)$ be an optimal solution to
  Problem~\eqref{eq:reg_obstacle_problem} and let
  \begin{equation*}
    S\colon L^{\infty}(\Omega;\mathbb{R}^{2\times 2}_{\rm{\sym}}) \supset
    Q^{\rm{ad}}\rightarrow W^{1,p}_0(\Omega)
  \end{equation*}
  be the control
  to state operator for the problem, particularly $\bar{u}_\gamma = S(\bar{q}_\gamma)$.
  To see differentiability of $S$, we first note that the Nemytskii
  operator is generated by a differentiable operator $\maxgamma
  \colon \mathbb{R} \rightarrow \mathbb{R}$. Hence, $\maxgamma \colon
  L^{\infty}(\Omega) \rightarrow L^\infty(\Omega)$ is G\^ateaux
  differentiable and differentiability as a mapping $W^{1,p}_0(\Omega)
  \rightarrow W^{-1,p}(\Omega)$ follows by compact embeddings. Now, the
  linearized PDE to~\eqref{eq:opt_conditions_regularized:1},
  w.r.t. $\bar{u}_\gamma$, defines an isomorphism $W^{1,p}_0(\Omega)
  \rightarrow W^{-1,p}(\Omega)$ following analogous arguments as in
  Lemma~\ref{lem:uniform_bound_lp_state} and noting that $0 \le \partial_u
  r(\gamma;\bar{u}_\gamma) \in L^\infty(\Omega)$.
  G\^ateaux differentiability of the solution
  operator then follows from the implicit function theorem.

  To derive the optimality conditions, we can express the objective in reduced form as
  \begin{equation*}
    \begin{aligned}
      j(\bar{q}_\gamma) = J(\bar{q}_\gamma, S(\bar{q}_\gamma)) = \frac{1}{2} \| S(\bar{q}_\gamma) - u_d \|^2 + \frac{\alpha}{2} \left\| \bar{q}_\gamma \right\|^2.
    \end{aligned}
  \end{equation*}
  For a local optimum, it is necessarily
  \begin{equation*}
    \begin{aligned}
      j'(\bar{q}_\gamma) (q - \bar{q}_\gamma) \geq 0 \quad\forall q \in Q^{ad}.
    \end{aligned}
  \end{equation*}
  Note that, our regularization approach ensures that the objective is G\^ateaux differentiable on $Q^{ad}$ with
  \begin{equation*}
    \begin{aligned}
      j'(\bar{q}_\gamma) (q - \bar{q}_\gamma) = \left(
        \bar{u}_\gamma - u_d, S'(\bar{q_\gamma})(q -
        \bar{q}_\gamma)\right) + \alpha \left( \bar{q}_\gamma, q -
        \bar{q}_\gamma \right) \quad \forall q \in Q^{ad}.
    \end{aligned}
  \end{equation*}
  By standard calculus, see,
  e.g.,~\cite{DeckelnickHinze:2011,DeckelnickHinze:2012,Troltzsch:2009},
  we obtain the adjoint equation
  \begin{equation}\label{eq:adjoint_equation_proof}
    \begin{aligned}
      \left( \bar{q}_\gamma \nabla \bar{p}_\gamma, \nabla v \right) + (\partial_u r(\gamma;  \bar{u}_\gamma) \bar{p}_\gamma , v) = (\bar{u}_\gamma- u_d , v) \quad\forall v \in H_0^1(\Omega)
    \end{aligned}
  \end{equation}
  and note that the linearization of the state equation in
  $\bar{u}_\gamma = S(\bar{q}_\gamma)$ in the direction
  $q-\bar{q}_\gamma$ satisfies, for any
  $v\in H^1_0(\Omega)$,
  \begin{equation}\label{eq:der_state_equation_proof}
    \begin{aligned}
      (\bar{q}_\gamma \nabla S'(\bar{q}_\gamma) (q - \bar{q}_\gamma), \nabla v)  + (\partial_u r(\gamma; \bar{u}_\gamma) &S'(\bar{q}_\gamma)(q - \bar{q}_\gamma),v)\\
      &= -((q- \bar{q}_\gamma) \nabla \bar{u}_\gamma, \nabla v)
    \end{aligned}
  \end{equation}
  Now, testing~\eqref{eq:adjoint_equation_proof} with
  $S'(\bar{q_\gamma})(q - \bar{q}_\gamma)$
  and~\eqref{eq:der_state_equation_proof} with $\bar{p}_\gamma$
  respectively, we get
  \begin{equation*}
    \begin{aligned}
      j'(\bar{q}_\gamma) (q - \bar{q}_\gamma) &= -(\nabla \bar{u}_\gamma, (q- \bar{q}_\gamma) \nabla \bar{p}_\gamma)+ \alpha (\bar{q}_\gamma , q - \bar{q}_\gamma)\\
      &= \left( - \nabla \bar{u}_\gamma \otimes \nabla \bar{p}_\gamma
        + \alpha \bar{q}_\gamma , q - \bar{q}_\gamma\right)  \quad \forall q \in Q^{ad}.
    \end{aligned}
  \end{equation*}
\end{proof}
To allow for a more direct comparison to an optimality system of
Problem~\eqref{eq:obstacle_problem_mpcc} when passing to the limit, we
introduce some additional variables
\begin{corollary}\label{crl:opt_conditions_regularized_extended}
  Let $\left( \bar{q}_{\gamma} , \bar{u}_{\gamma}\right) \in Q^{\rm{ad}} \times H^1_0(\Omega)$ be a
  local minimum of~\eqref{eq:reg_obstacle_problem}. Then there exist
  $\lambda_{\gamma}, \mu_{\gamma} \in L^2 (\Omega)$ and $p_{\gamma} \in H_0^1(\Omega)$
  such that
  \begin{subequations}
    \label{eq:extended_opt_conditions_regularized:All}
    \begin{align}
      - \nabla \cdot \left( \bar{q}_\gamma \nabla \bar{u}_{\gamma} \right)
      &= f - \bar{\lambda}_{\gamma}, \label{eq:extended_opt_conditions_regularized:1}\\
      \bar{\lambda}_{\gamma} - r ( \gamma, \bar{u}_{\gamma})
      &= 0,\label{eq:extended_opt_conditions_regularized:2}\\
      - \nabla \cdot \left( \bar{q}_{\gamma} \nabla \bar{p}_{\gamma} \right)
      &= \bar{u}_{\gamma} - u_d- \bar{\mu}_{\gamma},\label{eq:extended_opt_conditions_regularized:3}\\
      \bar{\mu}_{ \gamma } - \partial_u r( \gamma; \bar{u}_{\gamma}) \bar{p}_{ \gamma }
      &= 0,\label{eq:extended_opt_conditions_regularized:4} \\
      \left( \alpha \bar{q}_\gamma - \nabla \bar{u}_\gamma \otimes \nabla \bar{p}_\gamma \right) (q - \bar{q}_\gamma)
      &\geq 0 &&\forall q \in Q^{\rm{ad}}\label{eq:extended_opt_conditions_regularized:5}
    \end{align}
  \end{subequations}
  with $\nabla \bar{u}_\gamma \otimes \nabla \bar{p}_\gamma$ describing the outer product of $\nabla \bar{u}_\gamma$ and $\nabla \bar{p}_\gamma$.
\end{corollary}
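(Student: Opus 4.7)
The statement is essentially a reformulation of Proposition~\ref{prop:opt_conditions_regularized} with two auxiliary variables named explicitly, so my plan is to apply that proposition directly and then check that the new variables belong to $L^2(\Omega)$ and that the algebraic manipulations are valid.

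First, I would invoke Proposition~\ref{prop:opt_conditions_regularized} for the local minimum $(\bar{q}_\gamma,\bar{u}_\gamma)$, obtaining the adjoint state $\bar{p}_\gamma \in H^1_0(\Omega)$ together with~\eqref{eq:opt_conditions_regularized:1}--\eqref{eq:opt_conditions_regularized:3}. Then I would introduce the two slack variables
\begin{equation*}
  \bar{\lambda}_\gamma := r(\gamma,\bar{u}_\gamma), \qquad \bar{\mu}_\gamma := \partial_u r(\gamma;\bar{u}_\gamma)\,\bar{p}_\gamma,
\end{equation*}
so that~\eqref{eq:extended_opt_conditions_regularized:2} and~\eqref{eq:extended_opt_conditions_regularized:4} hold by definition and~\eqref{eq:extended_opt_conditions_regularized:5} coincides with~\eqref{eq:opt_conditions_regularized:3}.

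Next I would verify the regularity claim $\bar{\lambda}_\gamma,\bar{\mu}_\gamma \in L^2(\Omega)$. For $\bar{\lambda}_\gamma$ this is exactly the bound $\|r(\gamma,\bar{u}_\gamma)\|\le\|f\|$ from Lemma~\ref{lem:bounds_pde_reg_obstacle_problem}. For $\bar{\mu}_\gamma$ I would use the pointwise bound $\partial_u r(\gamma;\bar{u}_\gamma) = \gamma\,\maxgamma'(\psi-\bar{u}_\gamma) \ge 0$ together with Assumption~\ref{rem:regularization}, equation~\eqref{eq:smoothedMax_properties_dxbound}, to conclude $0\le \partial_u r(\gamma;\bar{u}_\gamma)\le 2\gamma \in L^\infty(\Omega)$; combined with $\bar{p}_\gamma\in H^1_0(\Omega)\hookrightarrow L^2(\Omega)$, this yields $\bar{\mu}_\gamma\in L^2(\Omega)$ (for fixed $\gamma$).

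With these definitions in hand, substituting $\bar{\lambda}_\gamma$ into~\eqref{eq:opt_conditions_regularized:1} produces~\eqref{eq:extended_opt_conditions_regularized:1}, and substituting $\bar{\mu}_\gamma$ into~\eqref{eq:opt_conditions_regularized:2} produces~\eqref{eq:extended_opt_conditions_regularized:3}. There is no real obstacle here; the only thing to be careful about is that the equalities~\eqref{eq:extended_opt_conditions_regularized:1} and~\eqref{eq:extended_opt_conditions_regularized:3} can be read pointwise a.e.\ (not merely in $H^{-1}(\Omega)$), but this is immediate since both sides already lie in $L^2(\Omega)$ by Lemma~\ref{lem:bounds_pde_reg_obstacle_problem} and the argument just given for $\bar{\mu}_\gamma$.
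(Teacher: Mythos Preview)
Your proposal is correct and matches the paper's treatment: the paper gives no explicit proof for this corollary, as it is intended merely as a relabeling of Proposition~\ref{prop:opt_conditions_regularized} with the auxiliary variables $\bar{\lambda}_\gamma := r(\gamma,\bar{u}_\gamma)$ and $\bar{\mu}_\gamma := \partial_u r(\gamma;\bar{u}_\gamma)\bar{p}_\gamma$ named explicitly. Your verification of the $L^2(\Omega)$ regularity of these multipliers via Lemma~\ref{lem:bounds_pde_reg_obstacle_problem} and the bound~\eqref{eq:smoothedMax_properties_dxbound} is exactly right and in fact makes explicit what the paper leaves implicit.
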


\begin{remark}
  Note that the variational inequality
  condition~\eqref{eq:extended_opt_conditions_regularized:5} can be
  written as a projection onto the set $Q^{\rm{ad}}$ by utilizing techniques
  from~\cite[Section 2]{DeckelnickHinze:2011}.
  This allows us to conclude, that the stationarity points $\bar{q}_\gamma$ can
  be characterized equivalently as
  \begin{equation*}
    \bar{q}_\gamma (x) = P_{\rm{ad}} \left(  \frac{1}{\alpha} \nabla \bar{u}_\gamma (x) \otimes \nabla \bar{p}_\gamma (x) \right)
  \end{equation*}
  with $P_{\rm{ad}}: \mathbb{R}^{2 \times 2}_{\sym} \to \mathbb{R}^{2 \times 2}_{\sym}$ the orthogonal projection onto
  \begin{equation*}
    \mathcal{Q}_{ad} = \{ q \in \mathbb{R}^{2 \times 2}_{\sym} \,|\, q_{\min} \identmatrix \curlyeqprec q \curlyeqprec q_{\max} \identmatrix \}.
  \end{equation*}
  Also see \cite{P19Report2023} for the derivation of this formula given the problem considered in this paper.
\end{remark}

Similar to the state equation, we can also formulate bounds for the
adjoint equation.
\begin{lemma}\label{lem:uniform_bound_adjoint}
  Let $\bar{p}_\gamma \in H^1_0 (\Omega)$ be defined as specified in
  Proposition~\ref{prop:opt_conditions_regularized}. Then for every
  feasible point $(\bar{q}, \bar{u})$ the corresponding adjoint
  $\bar{p}_\gamma$ is bounded, with
  \begin{equation*}
    \begin{aligned}
      \| \nabla \bar{p}_\gamma \| &\lesssim \|f\|_{H^{-1} ( \Omega )}
      + \|u_d\|_{H^{-1}(\Omega)},
    \end{aligned}
  \end{equation*}
  independent of $\bar{q}_\gamma$ and $\gamma$.
\end{lemma}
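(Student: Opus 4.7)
The plan is to test the adjoint equation with $\bar p_\gamma$ itself and exploit the favourable sign of the linearised penalty term, so that the bound falls out of coercivity together with the already established state bound.

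First I would compute $\partial_u r(\gamma; u) = \gamma \,{\maxgamma}'(\psi-u) \ge 0$, which follows from the chain rule and the monotonicity property~\eqref{eq:smoothedMax_properties_dxbound} of the regularised maximum. Thus, writing out~\eqref{eq:extended_opt_conditions_regularized:3} in weak form with test function $\bar p_\gamma \in H^1_0(\Omega)$ and substituting the definition of $\bar\mu_\gamma$ from~\eqref{eq:extended_opt_conditions_regularized:4}, one obtains
\begin{equation*}
(\bar q_\gamma \nabla \bar p_\gamma, \nabla \bar p_\gamma) + \gamma\bigl({\maxgamma}'(\psi-\bar u_\gamma)\,\bar p_\gamma,\bar p_\gamma\bigr) = (\bar u_\gamma - u_d,\bar p_\gamma).
\end{equation*}

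Next I would observe that the second term on the left-hand side is nonnegative because $\gamma>0$ and ${\maxgamma}'(\cdot)\ge 0$, while the first term is bounded below by $q_{\min}\|\nabla \bar p_\gamma\|^2$ thanks to the semidefinite cone constraint encoded in $Q^{\rm{ad}}$. The right-hand side is estimated by duality,
\begin{equation*}
(\bar u_\gamma-u_d,\bar p_\gamma) \le \bigl(\|\bar u_\gamma\|_{H^{-1}(\Omega)}+\|u_d\|_{H^{-1}(\Omega)}\bigr)\,\|\nabla \bar p_\gamma\|,
\end{equation*}
where I use the continuous embedding $L^2(\Omega)\hookrightarrow H^{-1}(\Omega)$ for both $\bar u_\gamma$ and $u_d$. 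Dividing by $\|\nabla \bar p_\gamma\|$ yields
\begin{equation*}
q_{\min}\|\nabla \bar p_\gamma\| \lesssim \|\bar u_\gamma\|_{H^{-1}(\Omega)}+\|u_d\|_{H^{-1}(\Omega)}.
\end{equation*}

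Finally, combining $\|\bar u_\gamma\|_{H^{-1}(\Omega)} \lesssim \|\bar u_\gamma\| \lesssim \|\nabla \bar u_\gamma\|$ (by Poincaré and the embedding $L^2 \hookrightarrow H^{-1}$) with the uniform gradient bound $\|\nabla \bar u_\gamma\| \lesssim \|f\|_{H^{-1}(\Omega)}$ from Lemma~\ref{lem:uniform_bound_l2_state} gives the claimed estimate, with the implicit constant depending only on $q_{\min}$ and the Poincaré constant of $\Omega$, hence independent of $\gamma$ and $\bar q_\gamma$. There is essentially no obstacle here; the only point to be careful about is to verify the sign of $\partial_u r$ in order to justify dropping the penalty term, which is crucial because otherwise the bound would inherit a $\gamma$-dependence through this term.
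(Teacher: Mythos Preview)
Your proof is correct and follows essentially the same approach as the paper: test the adjoint equation with $\bar p_\gamma$, drop the nonnegative penalty term $(\partial_u r(\gamma;\bar u_\gamma)\bar p_\gamma,\bar p_\gamma)$, apply coercivity via $q_{\min}$, and conclude using the $H^{-1}$ duality estimate together with the state bound from Lemma~\ref{lem:uniform_bound_l2_state}. The only cosmetic difference is that you spell out the computation $\partial_u r(\gamma;u)=\gamma\,{\maxgamma}'(\psi-u)\ge 0$ explicitly, whereas the paper simply cites~\eqref{eq:smoothedMax_properties_dxbound}.
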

\begin{proof}
  Testing the adjoint equation~\eqref{eq:opt_conditions_regularized:2} with
  $\bar{p}_{\gamma}$ yields
  \begin{equation*}
    \begin{aligned}
      &\left( \bar{q}_{\gamma} \nabla \bar{p}_{\gamma} , \nabla \bar{p}_{\gamma} \right)
      + \left( \partial_u r \left( \gamma ; \bar{u}_{\gamma} \right) \bar{p}_{\gamma}, \bar{p}_\gamma \right)
      = \left( \bar{u}_\gamma - u_d , \bar{p}_{\gamma} \right).
    \end{aligned}
  \end{equation*}
  Since $\partial_u r(\gamma; u_\gamma ) \in [0,2\gamma]$
  by~\eqref{eq:smoothedMax_properties_dxbound}, this allows us to find an
  upper bound for the adjoint using coercivity
  \begin{equation*}
    \begin{aligned}
      q_{\min} \left\| \nabla \bar{p}_\gamma \right\|^2 & \leq q_{\min} \left\| \nabla \bar{p}_\gamma \right\|^2 + \left( \partial_u r(\gamma; \bar{u}_\gamma) \bar{p}_\gamma , \bar{p}_\gamma \right)\\
      & \leq \left( \bar{q}_{\gamma} \nabla \bar{p}_\gamma, \nabla \bar{p}_\gamma \right) + \left( \partial_u r(\gamma; \bar{u}_\gamma) \bar{p}_\gamma , \bar{p}_\gamma \right)\\
      &= \left( \bar{u}_\gamma - u_d, \bar{p}_\gamma \right)\\
      &\leq \| \bar{u}_\gamma - u_d \|_{H^{-1}(\Omega)} \| \nabla \bar{p}_\gamma \|.
    \end{aligned}
  \end{equation*}
  Now, we can utilize the bound on $u_\gamma$ given by
  Lemma~\ref{lem:uniform_bound_l2_state} and
  $\|u_\gamma\|_{H^{-1}(\Omega)} \lesssim \|u_\gamma\|_{H^1(\Omega)}$ to conclude that the adjoint
  is uniformly bounded in $H^1(\Omega)$ independent of $\gamma > 0$.
\end{proof}

Finally, we can also formulate an $L^1(\Omega)$ estimate on the multiplier $\mu_\gamma$ using techniques from~\cite[Theorem 5.1]{ItoKunisch:2000}.
\begin{lemma}\label{lem:mu_bound}
  For any feasible point $(q_\gamma, u_\gamma)$, we can define an
  adjoint $p_\gamma \in H^1_0 (\Omega)$
  by~\eqref{eq:opt_conditions_regularized:2} in
  Proposition~\ref{prop:opt_conditions_regularized}.
  Then a multiplier $\mu_\gamma$ can be defined
  by~\eqref{eq:extended_opt_conditions_regularized:4} and satisfies
  \begin{equation*}
    \begin{aligned}
      \| \mu_\gamma \|_{L^1(\Omega)} \lesssim \left\| u_\gamma - u_d \right\|_{L^1 (\Omega)}.
    \end{aligned}
  \end{equation*}
\end{lemma}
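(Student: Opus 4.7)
My plan is to bound $\|\mu_\gamma\|_{L^1(\Omega)}$ by testing the adjoint equation with a smooth approximation of $\operatorname{sign}(\bar p_\gamma)$, following the strategy used by Ito–Kunisch. The key observation that makes the argument work cleanly is that $\partial_u r(\gamma;\bar u_\gamma) \ge 0$: indeed, since $r(\gamma;u)=-\gamma\maxgamma(\psi-u)$, we have $\partial_u r(\gamma;\bar u_\gamma)=\gamma\maxgamma'(\psi-\bar u_\gamma)$, and Assumption~\ref{rem:regularization} gives $\maxgamma'\ge 0$. Consequently $\mu_\gamma$ and $\bar p_\gamma$ have the same sign pointwise and
\begin{equation*}
|\mu_\gamma| = \partial_u r(\gamma;\bar u_\gamma)\,|\bar p_\gamma| = \partial_u r(\gamma;\bar u_\gamma)\,\bar p_\gamma\,\operatorname{sign}(\bar p_\gamma).
\end{equation*}

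Next, I would introduce a Lipschitz regularization of the sign, e.g.\ $\phi_\varepsilon\colon\Rbb\to[-1,1]$ defined by $\phi_\varepsilon(t)=\min(\max(t/\varepsilon,-1),1)$, so that $\phi_\varepsilon'\ge 0$, $|\phi_\varepsilon|\le 1$, and $\phi_\varepsilon(t)\to\operatorname{sign}(t)$ as $\varepsilon\to 0$. Since $\bar p_\gamma\in H^1_0(\Omega)$ and $\phi_\varepsilon$ is Lipschitz with $\phi_\varepsilon(0)=0$, the chain rule for Sobolev functions yields $\phi_\varepsilon(\bar p_\gamma)\in H^1_0(\Omega)$ with $\nabla\phi_\varepsilon(\bar p_\gamma)=\phi_\varepsilon'(\bar p_\gamma)\nabla\bar p_\gamma$, making it an admissible test function for~\eqref{eq:opt_conditions_regularized:2}.

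Testing the adjoint equation with $\phi_\varepsilon(\bar p_\gamma)$ produces
\begin{equation*}
\bigl(q_\gamma\phi_\varepsilon'(\bar p_\gamma)\nabla\bar p_\gamma,\nabla\bar p_\gamma\bigr) + \bigl(\partial_u r(\gamma;\bar u_\gamma)\bar p_\gamma,\phi_\varepsilon(\bar p_\gamma)\bigr) = \bigl(\bar u_\gamma-u_d,\phi_\varepsilon(\bar p_\gamma)\bigr).
\end{equation*}
The elliptic term on the left is nonnegative because $q_\gamma$ is positive definite and $\phi_\varepsilon'\ge 0$, and the second term on the left is also nonnegative because $\bar p_\gamma$ and $\phi_\varepsilon(\bar p_\gamma)$ share their sign. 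Using $|\phi_\varepsilon|\le 1$ on the right gives
\begin{equation*}
\bigl(\partial_u r(\gamma;\bar u_\gamma)\bar p_\gamma,\phi_\varepsilon(\bar p_\gamma)\bigr) \le \|\bar u_\gamma - u_d\|_{L^1(\Omega)}.
\end{equation*}

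Finally, I would pass to the limit $\varepsilon\to 0$. Pointwise, $\phi_\varepsilon(\bar p_\gamma)\to\operatorname{sign}(\bar p_\gamma)$ a.e., and the integrand $\partial_u r(\gamma;\bar u_\gamma)\bar p_\gamma\,\phi_\varepsilon(\bar p_\gamma)$ is nonnegative and dominated by $|\mu_\gamma|\in L^1(\Omega)$ (the latter because $\bar p_\gamma\in H^1_0\hookrightarrow L^2(\Omega)$ and $\partial_u r(\gamma;\bar u_\gamma)\in L^\infty(\Omega)$ for fixed $\gamma$, so integrability is not an issue, only $\gamma$-independence of the resulting bound is). By monotone (or dominated) convergence, the limit equals $\int_\Omega \partial_u r(\gamma;\bar u_\gamma)|\bar p_\gamma|\,dx=\|\mu_\gamma\|_{L^1(\Omega)}$, yielding the desired estimate. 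There is no substantial obstacle: the only subtle point is ensuring $\phi_\varepsilon(\bar p_\gamma)\in H^1_0(\Omega)$ so that it is a legitimate test function, which follows from the chain rule and $\phi_\varepsilon(0)=0$, together with observing that the sign condition on $\partial_u r$ makes both terms on the left of the tested equation cooperate in the correct direction.
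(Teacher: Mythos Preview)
Your proof is correct and follows essentially the same approach as the paper: you test the adjoint equation with a Lipschitz approximation of $\operatorname{sign}(\bar p_\gamma)$ (your $\phi_\varepsilon$ is literally the paper's smoothed sign $S_\varepsilon$), discard the nonnegative elliptic term, bound the right-hand side by $\|\bar u_\gamma-u_d\|_{L^1(\Omega)}$, and pass to the limit $\varepsilon\to 0$ via dominated convergence. The paper likewise cites Ito--Kunisch for this technique, so there is no substantive difference.
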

\begin{proof}
  To see this estimate, we introduce the smoothed sign function
  \begin{equation*}
    \begin{aligned}
      S_{\varepsilon}(x) = \begin{cases} 1 &\text{ for } x \geq \varepsilon,\\ \frac{x}{\varepsilon} &\text{ for } | x | \leq \varepsilon,\\ -1 &\text{ for } x \leq - \varepsilon.
      \end{cases}
    \end{aligned}
  \end{equation*}
  With this we can use $S_{\varepsilon}(p_\gamma)$ as test function
  for~\eqref{eq:opt_conditions_regularized:2}, i.e., we get
  \begin{equation}\label{eq:l1_test}
    \begin{aligned}
      \left( q_\gamma \nabla p_\gamma, \nabla S_{\varepsilon}(p_\gamma) \right) + \left( \partial_u r( \gamma , u_\gamma) p_\gamma , S_{\varepsilon}(p_\gamma) \right) = \left( u_\gamma - u_d , S_{\varepsilon}(p_\gamma) \right).
    \end{aligned}
  \end{equation}
  Since  $S'_{\varepsilon}(p_\gamma) \geq 0$, we get
  \begin{equation*}
    \begin{aligned}
      \left( q_\gamma \nabla p_\gamma, \nabla S_{\varepsilon}(p_\gamma) \right) = \left( q_\gamma \nabla p_\gamma, S'_{\varepsilon}(p_\gamma) \nabla p_\gamma \right) \geq 0
    \end{aligned}
  \end{equation*}
  by positive definiteness of $q_\gamma$. So that~\eqref{eq:l1_test}
  and~\eqref{eq:extended_opt_conditions_regularized:4} give us
  \begin{equation*}
    \begin{aligned}
      \left( \mu_\gamma, S_{\varepsilon}(p_\gamma) \right) =
      \left( \partial_u r( \gamma , u_\gamma) p_\gamma , S_{\varepsilon}(p_\gamma) \right) \leq \left( u_\gamma - u_d , S_{\varepsilon}(p_\gamma) \right) \leq \| u_\gamma - u_d \|_{L^1 (\Omega)}
    \end{aligned}
  \end{equation*}
  where we utilize $\| S_{\varepsilon} (p_\gamma) \|_{L^\infty ( \Omega )} \leq 1$ in the last estimate.
  Now since $p_\gamma S_{\varepsilon}(p_\gamma) \to |p_\gamma| $
  almost everywhere on $\Omega$ as $\varepsilon \to 0$ and $0 \le \partial_ur(\gamma,u_\gamma) \leq 2\gamma$ we get
  \begin{equation*}
    \begin{aligned}
      \|\mu_\gamma\|_{L^1(\Omega)} = \left\| \partial_ur(\gamma,u_\gamma) p_\gamma \right\|_{L^1(\Omega)} \lesssim \left\| u_\gamma - u_d \right\|_{L^1 (\Omega)}
    \end{aligned}
  \end{equation*}
  by Lebesgue's dominated convergence theorem.
\end{proof}

\section{Limiting Optimality Conditions}\label{sec:limitingcondition}
We now compute first order limiting optimality conditions for the original problem.
First of all, we need to see that $H$-convergence is indeed compatible
with the regularization of the variational inequality.

\begin{lemma}\label{lem:feasibility_h_limit}
  Let $( \bar{q}_\gamma, \bar{u}_\gamma )$ be a sequence of feasible
  points of Problem~\eqref{eq:reg_obstacle_problem}. Assume that
  \[
    \bar{q}_\gamma \hto q^H
  \]
  as $\gamma \to \infty$. Then
  \begin{align*}
    \bar{u}_\gamma &\rightharpoonup
    \bar{u}_\infty& \text{in }&H^1_0(\Omega)\\
    -r(\gamma,\bar{u}_\gamma)& \rightharpoonup \bar{\lambda}_\infty& \text{in }&L^2(\Omega)
  \end{align*}
  and the pair
  $(q^H,\bar{u}_\infty,\bar{\lambda}_\infty)$ is feasible for
  Problem~\eqref{eq:obstacle_problem_mpcc}.
\end{lemma}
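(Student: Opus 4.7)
The plan is to extract weakly convergent subsequences from the uniform bounds of Section~\ref{sec:regularization}, invoke $H$-convergence to pass to the limit in the state PDE, and then verify the four constraints of~\eqref{eq:obstacle_problem_mpcc} one at a time.

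First, I would set $\bar{\lambda}_\gamma := -r(\gamma,\bar{u}_\gamma) = \gamma \maxgamma(\psi - \bar{u}_\gamma)$ and combine Lemma~\ref{lem:uniform_bound_l2_state} and Lemma~\ref{lem:bounds_pde_reg_obstacle_problem} to extract (non-relabelled) subsequences with $\bar{u}_\gamma \rightharpoonup \bar{u}_\infty$ in $H^1_0(\Omega)$ (and strongly in $L^2(\Omega)$ by Rellich) and $\bar{\lambda}_\gamma \rightharpoonup \bar{\lambda}_\infty$ in $L^2(\Omega)$. The regularized PDE then reads $-\nabla\cdot(\bar{q}_\gamma \nabla \bar{u}_\gamma) = f + \bar{\lambda}_\gamma$, and the weak $L^2$-convergence of the right-hand side upgrades to strong convergence in $H^{-1}(\Omega)$ via the compact embedding $L^2(\Omega)\hookrightarrow H^{-1}(\Omega)$. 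This matches the hypotheses of Definition~\ref{def:h_convergence}, so $\bar{q}_\gamma \hto q^H$ delivers $\bar{q}_\gamma \nabla \bar{u}_\gamma \rightharpoonup q^H \nabla \bar{u}_\infty$ in $L^2(\Omega;\mathbb{R}^2)$; taking distributional divergences then yields the limiting state equation $-\nabla\cdot(q^H\nabla \bar{u}_\infty) = f + \bar{\lambda}_\infty$ in $H^{-1}(\Omega)$.

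Next, I would verify the obstacle constraint and the sign of the multiplier. Lemma~\ref{lem:growth_cond} at $s=2$ gives $\|\max(\psi - \bar{u}_\gamma, 0)\|_{L^2(\Omega)} \lesssim 1/\gamma \to 0$, and combined with the strong $L^2$-convergence $\bar{u}_\gamma \to \bar{u}_\infty$ this forces $\max(\psi - \bar{u}_\infty, 0) = 0$ a.e., hence $\bar{u}_\infty \ge \psi$ a.e.\ on $\Omega$; the q.e.\ statement then follows by passing to the quasi-continuous representative of $\bar{u}_\infty \in H^1_0(\Omega)$. For nonnegativity, each $\bar{\lambda}_\gamma \geq 0$ pointwise by Assumption~\ref{rem:regularization}, and weak $L^2$-limits preserve the cone $\{v\ge 0\}$ (Mazur), so $\bar{\lambda}_\infty \ge 0$ a.e., in particular as an element of $H^{-1}(\Omega)$.

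Finally, for the complementarity condition, I would combine the $L^2$-bound on $\bar{\lambda}_\gamma$ with the decay from Lemma~\ref{lem:growth_cond} to estimate
\[
  0 \le (\bar{\lambda}_\gamma, \psi - \bar{u}_\gamma)
  = \int_{\{\bar{u}_\gamma < \psi\}} \bar{\lambda}_\gamma (\psi - \bar{u}_\gamma)\, dx
  \le \|\bar{\lambda}_\gamma\|_{L^2(\Omega)} \|\max(\psi - \bar{u}_\gamma,0)\|_{L^2(\Omega)} \lesssim 1/\gamma,
\]
while a weak-strong pairing gives $(\bar{\lambda}_\gamma, \psi - \bar{u}_\gamma) \to (\bar{\lambda}_\infty, \psi - \bar{u}_\infty)$; the two together force $(\bar{\lambda}_\infty, \psi - \bar{u}_\infty)=0$. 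I expect the main obstacle to be the careful checking of the hypotheses of Definition~\ref{def:h_convergence}: it is the stronger $L^2$-bound of Lemma~\ref{lem:bounds_pde_reg_obstacle_problem}, rather than a mere $H^{-1}$-bound on the divergences, that provides strong $H^{-1}$-convergence and thereby unlocks the $H$-convergence conclusion; the assumption $\psi \in \mathbb{R}$ enters crucially both here and in the application of Lemma~\ref{lem:growth_cond}.
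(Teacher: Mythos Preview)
Your argument is correct and follows the same skeleton as the paper's proof: extract weak limits from the uniform bounds, invoke $H$-convergence to pass to the limit in the PDE, and verify the four constraints of~\eqref{eq:obstacle_problem_mpcc}. Two small points are worth noting. First, the paper upgrades to $L^\infty$-convergence of $\bar{u}_\gamma$ via the $W^{1,p}$-bound of Lemma~\ref{lem:uniform_bound_lp_state}, and handles complementarity through the pointwise inequality $\maxgamma(x)\le\max(0,x)$; your route via strong $L^2$-convergence and the Cauchy--Schwarz estimate $(\bar{\lambda}_\gamma,\psi-\bar{u}_\gamma)\le\|\bar{\lambda}_\gamma\|\,\|\max(\psi-\bar{u}_\gamma,0)\|\lesssim 1/\gamma$ is slightly more elementary and avoids the higher integrability entirely, which is a nice simplification for this lemma. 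Second, the lemma asserts convergence of the \emph{full} sequence, not merely of a subsequence; you should close by observing that, for the fixed limit $q^H$, the obstacle problem determines $(\bar{u}_\infty,\bar{\lambda}_\infty)$ uniquely, so the usual subsequence argument upgrades to convergence of the whole sequence.
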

\begin{proof}
  Because $Q^{\rm{ad}}$ is $H$-sequentially compact by~\cite[Theorem~1.1.26]{Allaire:2002},
  the $H$-limit satisfies $q^H \in Q^{\rm{ad}}$.

  Lemma~\ref{lem:uniform_bound_l2_state} and
  Lemma~\ref{lem:bounds_pde_reg_obstacle_problem}
  assert the existence of a subsequence, denoted the same, and
  functions $\widetilde{u} \in H^1_0(\Omega)$, $\widetilde{\lambda} \in
  L^2(\Omega)$ such that
  \[
    \bar{u}_\gamma \rightharpoonup \widetilde{u}, \qquad
    -r(\gamma,\bar{u}_\gamma) \rightharpoonup \widetilde{\lambda}
  \]
  in $H^1_0(\Omega)$ and $L^2(\Omega)$, respectively.
  In addition, by Lemma~\ref{lem:uniform_bound_lp_state},
  $\bar{u}_\gamma \rightharpoonup \widetilde{u}$ holds in $W^{1,p}_0(\Omega)$
  and thus $\bar{u}_\gamma \rightarrow \widetilde{u}$ in $L^\infty(\Omega)$.
  Clearly, $\widetilde{\lambda} \ge 0$ as this is true for $-r(\gamma,\bar{u}_\gamma)$.

  By Lemma~\ref{lem:growth_cond}, we immediately see that
  \[
  \|\max(\psi-u_\gamma,0)\|_{L^\infty(\Omega)} \rightarrow 0
  \]
  and thus $\widetilde{u}\in K$.
  To see complementarity, we compute, utilizing strong convergence of $\bar{u}_\gamma$ in $L^\infty(\Omega)$,
  \begin{align*}
    0 &\ge (\widetilde{\lambda},\psi-\widetilde{u}) \\
    &\leftarrow -(r(\gamma;\bar{u}_\gamma),\psi-\bar{u}_\gamma) \\
    &= \gamma (\maxgamma(\psi-\bar{u}_\gamma),\psi-\bar{u}_\gamma)\\
    &\ge \gamma (\maxgamma(\psi-\bar{u}_\gamma),\maxgamma(\psi-\bar{u}_\gamma))\\
    &\ge 0
  \end{align*}
  utilizing~\eqref{eq:smoothedMax_properties_leq}.

  The Definition~\ref{def:h_convergence} of $H$-convergence asserts for any $\phi \in H^1_0(\Omega)$
  \[
    (q^H\nabla \widetilde{u},\nabla \phi) \leftarrow (\bar{q}_\gamma\nabla \bar{u}_\gamma,\nabla \phi) = (f,\phi) - (r(\gamma,\bar{u}_\gamma),\phi) \rightarrow (f,\phi) + (\widetilde{\lambda},\phi).
  \]
  Hence, we conclude that the triplet $(q^H,\widetilde{u},\widetilde{\lambda})$ is feasible
  for~\eqref{eq:obstacle_problem_mpcc}. Since, for fixed $q^H$ the
  pair $(\widetilde{u},\widetilde{\lambda})$ is uniquely determined the choice of a subsequence
  is not necessary proving the claim.
\end{proof}

We continue, showing that we need to assert that for any fixed control $q
\in Q^{\rm{ad}}$ the corresponding state solutions $u_\gamma$
of~\eqref{eq:pde_reg_obstacle_problem} converge to the respective
state solution $u$ of~\eqref{eq:vi_obstacle_problem} in $H^1_0(\Omega)$.
Since the control is fixed in this setting we can immediately
copy the arguments of~\cite[Lemma~3.3]{MeyerRademacherWollner:2015}
to see the assertion as the coefficient $q$ in the operator has no
influence on the arguments and the changes due to the different
regularization are minor.

\begin{lemma}\label{lem:convergence_state_setcontrol}
  Let $q \in Q^{\rm{ad}}$ be given and denote by
  $u_\gamma \in H_0^1(\Omega)$ the corresponding solution
  for~\eqref{eq:pde_reg_obstacle_problem} and by $u \in H_0^1(\Omega)$
  the corresponding solution of~\eqref{eq:vi_obstacle_problem}.
  Then
  \begin{equation*}
    u_\gamma \to u \text{ strongly in } H_0^1 ( \Omega
    ) \qquad \text{as } \gamma \to \infty.
  \end{equation*}
\end{lemma}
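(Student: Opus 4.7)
The plan is to follow the classical Moreau--Yosida style argument for penalty approximations of a variational inequality, as indicated by the paper's reference to \cite[Lemma~3.3]{MeyerRademacherWollner:2015}; the coefficient $q\in Q^{\rm{ad}}$ is fixed throughout, so it plays a passive role of a uniformly bounded, uniformly coercive symmetric tensor and does not interact with $\gamma$.

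First I would extract weak convergence. By Lemma~\ref{lem:uniform_bound_l2_state} the sequence $u_\gamma$ is bounded in $H^1_0(\Omega)$, so up to a subsequence $u_\gamma \rightharpoonup \widetilde u$ in $H^1_0(\Omega)$. By Lemma~\ref{lem:bounds_pde_reg_obstacle_problem}, $r(\gamma,u_\gamma)$ is bounded in $L^2(\Omega)$, hence converges weakly along a further subsequence to some $\widetilde\lambda\ge 0$. Lemma~\ref{lem:growth_cond} gives $\|\max(\psi-u_\gamma,0)\|_{L^\infty}\to 0$, so the limit satisfies $\widetilde u\ge\psi$, i.e., $\widetilde u\in K$. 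Passing to the limit in~\eqref{eq:pde_reg_obstacle_problem} tested against an arbitrary $v\in H^1_0(\Omega)$ (where $q$ is now fixed and linear in $\nabla u_\gamma$, so weak convergence suffices) identifies $\widetilde u$ as a solution of the associated complementarity form, and hence, via Theorem~I.5.5 of \cite{KinderlehrerStampacchia:2000}, as a solution of~\eqref{eq:vi_obstacle_problem}. By uniqueness of the VI solution $\widetilde u = u$, so the whole sequence converges weakly, and a subsequence argument upgrades to convergence of the full net.

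The main obstacle is upgrading weak to strong $H^1_0$-convergence. The key trick is to use $\max(u_\gamma,\psi) = u_\gamma + (\psi-u_\gamma)^+ \in K$ (admissible since $\psi<0$ and $u_\gamma\in H^1_0(\Omega)$) as a test function in~\eqref{eq:vi_obstacle_problem}, and to test~\eqref{eq:pde_reg_obstacle_problem} with $u_\gamma - u$. Coercivity of $q$ gives
\begin{equation*}
q_{\min}\|\nabla(u_\gamma-u)\|^2 \le (q\nabla(u_\gamma-u),\nabla(u_\gamma-u)),
\end{equation*}
and subtracting the two tested identities/inequalities above yields an upper bound of the form
\begin{equation*}
q_{\min}\|\nabla(u_\gamma-u)\|^2 \le -(r(\gamma,u_\gamma),u_\gamma-u) + (q\nabla u,\nabla(\psi-u_\gamma)^+) - (f,(\psi-u_\gamma)^+).
\end{equation*}
The second and third terms vanish in the limit by Lemma~\ref{lem:growth_cond}, which controls $(\psi-u_\gamma)^+$ in $H^1_0(\Omega)$ as well as in $L^\infty(\Omega)$. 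The first term is nonpositive: splitting $u_\gamma - u = -(\psi-u_\gamma)+(\psi-u)$, using that $\maxgamma(\psi-u_\gamma)\ge 0$ and vanishes on $\{\psi-u_\gamma\le 0\}$ by Assumption~\ref{rem:regularization}, and that $\psi - u \le 0$ because $u\in K$, both resulting contributions are manifestly $\le 0$.

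The subtle point to double-check is admissibility of $\max(u_\gamma,\psi)$ as a test function, which follows since $\psi<0$ and the trace of $u_\gamma$ vanishes, and the fact that Assumption~\ref{rem:regularization} really gives $\maxgamma(x)=0$ for $x\le 0$ so the sign argument on the penalty term is clean. Once these pieces are assembled, dividing by $q_{\min}$ and sending $\gamma\to\infty$ yields $\|\nabla(u_\gamma-u)\|\to 0$, which is the claim.
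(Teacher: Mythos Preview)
Your proposal is correct, and the strong-convergence step takes a genuinely different route from the paper's. The paper does not introduce the test function $\max(u_\gamma,\psi)$ and does not invoke Lemma~\ref{lem:growth_cond} at all. Instead, it first obtains weak convergence by citing Lemma~\ref{lem:feasibility_h_limit} with the constant sequence $\bar q_\gamma=q$, and then proves norm convergence $(q\nabla u_\gamma,\nabla u_\gamma)\to(q\nabla u,\nabla u)$ via a $\liminf$/$\limsup$ squeeze: weak lower semicontinuity of $v\mapsto(q\nabla v,\nabla v)$ gives one direction, and for the other the paper adds $0=(q\nabla u,\nabla u_\gamma)-(q\nabla u_\gamma,\nabla u)$ (symmetry of $q$) and uses the pointwise sign $(r(\gamma;u_\gamma),u-u_\gamma)\le 0$ to bound $(q\nabla u_\gamma,\nabla(u-u_\gamma))\ge(f,u-u_\gamma)$; the residual $(q\nabla u,\nabla(u_\gamma-u))-(f,u-u_\gamma)$ then vanishes by weak $H^1_0$ and strong $L^2$ convergence. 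Your argument is the more classical penalty/Moreau--Yosida estimate: testing the VI with the projected iterate and using the quantitative feasibility-violation bounds of Lemma~\ref{lem:growth_cond} to kill the remainders. This buys you an explicit convergence rate (of order $\gamma^{-1/2}$ in $H^1_0$ if you track constants), which the paper's squeeze argument does not provide; the paper's version, in turn, is shorter and self-contained once Lemma~\ref{lem:feasibility_h_limit} is available. One small omission in your weak-limit identification: to invoke the complementarity equivalence you also need $(\widetilde\lambda,\widetilde u-\psi)=0$, which is precisely the computation carried out in Lemma~\ref{lem:feasibility_h_limit}; either cite that lemma directly (as the paper does) or add the one-line verification.
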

\begin{proof}
  We note, that the constant sequence $\bar{q}_\gamma = q $ is $H$-convergent to $q^H=q$. Hence
  the previous Lemma~\ref{lem:feasibility_h_limit} shows the weak convergence
  $u_\gamma \rightharpoonup u$.

  The mapping $u_\gamma \rightarrow \left( q \nabla u_\gamma , \nabla
    u_\gamma \right)$ is convex and continuous and thus weakly
    lower semicontinuous for a given fixed control $q$.
    We utilize this to show strong convergence
  \begin{equation*}
    \begin{aligned}
      0 &\leq \liminf_{\gamma \to \infty} \left( \left( q \nabla u_\gamma, \nabla u_\gamma \right) - \left( q \nabla u, \nabla u \right) \right)\\
      &\leq \limsup_{\gamma \to \infty} \left( \left( q \nabla u_\gamma, \nabla u_\gamma \right) - \left( q \nabla u, \nabla u \right) \right)\\
      &= \limsup_{\gamma \to \infty} \left( \left( q \nabla u_\gamma, \nabla u_\gamma \right) - \left( q \nabla u, \nabla u \right) - \left( q \nabla u_\gamma, \nabla u \right) + \left( q \nabla u, \nabla u_\gamma \right) \right)\\
      &= \limsup_{\gamma \to \infty} \left( q \nabla u, \nabla \left( u_\gamma - u \right) \right) - \left( q \nabla u_\gamma, \nabla \left( u - u_\gamma \right) \right)\\
      &\leq \lim_{\gamma \to \infty} \left( \left( q \nabla u, \nabla
          \left( u_\gamma - u \right) \right) - \left( f, u - u_\gamma
        \right) \right)\\
      &= 0.
    \end{aligned}
  \end{equation*}
  Note that we used the symmetry of $q$ to add $0 = \left(
    q \nabla u, \nabla u_\gamma \right) -
  \left( q \nabla u_\gamma , \nabla u \right)$ in the first equality
  and
  \[
  ( q \nabla u_\gamma, \nabla ( u - u_\gamma ) ) = (f,u - u_\gamma) - (r(\gamma;u_\gamma),u - u_\gamma) \ge (f,u - u_\gamma)
  \]
  in the last inequality which holds since $u \ge \psi \ge u_\gamma$ whenever $r(\gamma;u_\gamma)\ne 0$. This implies
  \begin{equation*}
    \lim_{\gamma \to \infty} \left( \left( q \nabla
        u_\gamma, \nabla u_\gamma \right) -
      \left( q \nabla u, \nabla u \right)
    \right) = 0
  \end{equation*}
  and since the scalar product $(q\nabla \cdot,\nabla \cdot) $ is
  equivalent to the $H^1_0(\Omega)$ scalar product this shows
  $\left\| u_\gamma \right\|_{H^1(\Omega)} \to \left\| u \right\|_{H^1(\Omega)}$.

  Together with the weak convergence of $u$ this proves strong convergence in $H_0^1(\Omega)$.
\end{proof}

As a basis for further arguments, we first show that a series of optimal solutions of the
regularized problems has an $H$-accumulation point that is a global minimizer of the original problem.

\begin{theorem}\label{thm:h_limit_optimal}
  Let $( \bar{q}_\gamma, \bar{u}_\gamma )$ be a global minimizer of
  Problem~\eqref{eq:reg_obstacle_problem} and $(\bar{q}, \bar{u})$ a
  global minimizer of Problem~\eqref{eq:obstacle_problem_mpcc}. If
  $\gamma \to \infty$ then every sequence $\bar{q}_{\gamma}$ admits a
  $H$-accumulation point, and any such accumulation point is a global
  minimizer of~\eqref{eq:obstacle_problem_mpcc}. Further, every sequence of regularized
  solutions has a subsequence $(\bar{q}_\gamma, \bar{u}_\gamma)$ such that
  \begin{equation*}
    J(\bar{q}_\gamma, \bar{u}_\gamma) \to J(\bar{q}, \bar{u}).
  \end{equation*}
\end{theorem}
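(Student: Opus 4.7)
The plan is to combine the $H$-sequential compactness of $Q^{\rm{ad}}$ with the feasibility-passage result in Lemma~\ref{lem:feasibility_h_limit} to extract an $H$-accumulation point that is feasible for~\eqref{eq:obstacle_problem_mpcc}, then sandwich the objective values using the global optimum $(\bar q, \bar u)$ as a recovery sequence via Lemma~\ref{lem:convergence_state_setcontrol}. Concretely, I would first invoke~\cite[Theorem~1.2.16]{Allaire:2002} to produce, from any sequence of regularized minimizers, a subsequence with $\bar q_\gamma \hto q^H$ for some $q^H \in Q^{\rm{ad}}$. Applying Lemma~\ref{lem:feasibility_h_limit}, and passing to a further subsequence, yields $\bar u_\gamma \rightharpoonup \bar u_\infty$ in $H^1_0(\Omega)$ with the triple $(q^H,\bar u_\infty,\bar\lambda_\infty)$ feasible for~\eqref{eq:obstacle_problem_mpcc}. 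This handles the existence of an accumulation point.

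To show optimality of $(q^H,\bar u_\infty)$, I would construct a recovery sequence anchored at the global minimizer $(\bar q,\bar u)$ of~\eqref{eq:obstacle_problem_mpcc}. For each $\gamma$, let $\widetilde u_\gamma \in H^1_0(\Omega)$ denote the (unique) solution of~\eqref{eq:pde_reg_obstacle_problem} with control $\bar q$, so that $(\bar q,\widetilde u_\gamma)$ is feasible for~\eqref{eq:reg_obstacle_problem}. Lemma~\ref{lem:convergence_state_setcontrol} gives $\widetilde u_\gamma \to \bar u$ strongly in $H^1_0(\Omega)$, hence by continuity of the tracking term
\[
J(\bar q,\widetilde u_\gamma) \longrightarrow J(\bar q,\bar u).
\]
By global optimality of $(\bar q_\gamma,\bar u_\gamma)$ for the regularized problem,
\[
J(\bar q_\gamma,\bar u_\gamma) \le J(\bar q,\widetilde u_\gamma),
\]
so $\limsup_{\gamma\to\infty} J(\bar q_\gamma,\bar u_\gamma) \le J(\bar q,\bar u)$.

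For the matching liminf, I would use lower semicontinuity of the two parts of $J$: the tracking term $\tfrac12\|\cdot - u_d\|^2$ is weakly lower semicontinuous in $u$, and the control term $\tfrac\alpha2\|\cdot\|^2$ is $H$-lower semicontinuous (this is exactly the ingredient already invoked in the proof of Theorem~\ref{thm:existence_reg_obstacle_problem}, citing~\cite{DeckelnickHinze:2011}). Combining these with $\bar u_\gamma \rightharpoonup \bar u_\infty$ and $\bar q_\gamma \hto q^H$,
\[
J(q^H,\bar u_\infty) \le \liminf_{\gamma\to\infty} J(\bar q_\gamma,\bar u_\gamma).
\]
Feasibility of $(q^H,\bar u_\infty)$ for~\eqref{eq:obstacle_problem_mpcc} together with global optimality of $(\bar q,\bar u)$ gives $J(\bar q,\bar u)\le J(q^H,\bar u_\infty)$. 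Chaining all inequalities forces equality throughout, which proves both that $(q^H,\bar u_\infty)$ is a global minimizer of~\eqref{eq:obstacle_problem_mpcc} and that $J(\bar q_\gamma,\bar u_\gamma)\to J(\bar q,\bar u)$ along the subsequence.

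The only genuinely delicate step is the liminf direction, because the objective couples $q$ and $u$ only additively but the feasibility passage relies on $H$-convergence (not weak $L^2$-convergence) for $\bar q_\gamma$; the tracking term poses no difficulty (weak $H^1_0$-convergence plus compact embedding into $L^2$ yields strong $L^2$-convergence of $\bar u_\gamma$), while the control term needs the $H$-lower semicontinuity of $q \mapsto \|q\|^2$ already exploited in the existence proof. Once this l.s.c.\ ingredient is in hand, the recovery-sequence-plus-sandwich argument closes the proof.
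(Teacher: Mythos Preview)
Your proposal is correct and follows essentially the same route as the paper: $H$-compactness of $Q^{\rm{ad}}$ plus Lemma~\ref{lem:feasibility_h_limit} for feasibility of the accumulation point, the recovery sequence $(\bar q,\widetilde u_\gamma)$ with Lemma~\ref{lem:convergence_state_setcontrol} for the $\limsup$ bound, and weak/$H$-lower semicontinuity of the two parts of $J$ (citing~\cite{DeckelnickHinze:2011}) for the $\liminf$ bound, then the sandwich. The only cosmetic difference is the order in which you present the $\limsup$ and $\liminf$ halves.
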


\begin{proof}
  Because $Q^{\rm{ad}}$ is $H$-sequentially compact by~\cite[Theorem~1.2.16]{Allaire:2002},
  there is a subsequence of $\bar{q}_{\gamma}$ again denoted $\bar{q}_{\gamma}$ that $H$-converges to
  some $q^H$ in $Q^{\rm{ad}}$. By
  Lemma~\ref{lem:uniform_bound_l2_state} the
  state $\bar{u}_{\gamma}$ is bounded in $H^1_0 ( \Omega)$ and thus has a weakly convergent
  subsequence again denoted $\bar{u}_{\gamma}$ with weak limit
  $\bar{u}_\infty$ in $H^1_0(\Omega)$. The tracking-type functionals considered here are lower
  semicontinuous and, see~\cite{DeckelnickHinze:2011}, the second term is also
  $H$-lower semicontinuous, so this gives us
  \begin{equation*}
    J \left( q^H , \bar{u}_\infty \right) \leq \liminf_{\gamma \to \infty} J \left( \bar{q}_{\gamma}, \bar{u}_{\gamma} \right).
  \end{equation*}

  Now let $\bar{q}$ be a, globally, optimal control of Problem~\eqref{eq:obstacle_problem_mpcc} and
  let $\bar{u}$ be the corresponding state variable, then, noting that
  $(q^H,\bar{u}_\infty)$ is feasible
  for~\eqref{eq:obstacle_problem_mpcc} by
  Lemma~\ref{lem:feasibility_h_limit}, we have
  \begin{equation*}
    \begin{aligned}
      J \left( \bar{q} , \bar{u} \right) &\leq J \left( q^H , \bar{u}_\infty \right)\\
      &\leq \liminf_{\gamma \to \infty} J \left( \bar{q}_{\gamma}, \bar{u}_{\gamma} \right)\\
      &\leq \limsup_{\gamma \to \infty} J \left( \bar{q}_{\gamma}, \bar{u}_{\gamma} \right).
    \end{aligned}
  \end{equation*}
  Now let $\widehat{u}_{\gamma}$ be the corresponding state to control
  $\bar{q}$ for problem~\eqref{eq:reg_obstacle_problem}. Then, because
  $\left( \bar{q}_\gamma , \bar{u}_\gamma \right)$ is the optimal solution
  to~\eqref{eq:reg_obstacle_problem} and $\left( \bar{q} , \widehat{u}_\gamma \right)$ is a
  feasible point for this problem, we get
  \begin{equation*}
    \begin{aligned}
      \limsup_{\gamma \to \infty} J \left( \bar{q}_{\gamma}, \bar{u}_{\gamma} \right)
      \leq \lim_{\gamma \to \infty} J \left( \bar{q}, \widehat{u}_{\gamma} \right)
    \end{aligned}
  \end{equation*}
  For a given control, the corresponding state converges strongly in
  $H^1_0 (\Omega)$, see Lemma~\ref{lem:convergence_state_setcontrol}.
  With this we can estimate
  \begin{equation*}
    \begin{aligned}
      \lim_{\gamma \to \infty} J \left( \bar{q}, \widehat{u}_{\gamma} \right) = J \left( \bar{q}, \bar{u} \right).
    \end{aligned}
  \end{equation*}
  Thus, we conclude that
  \begin{equation*}
    J \left( q^H, \bar{u}_\infty \right) = \lim_{\gamma \to \infty} J(\bar{q}_{\gamma} , \bar{u}_{\gamma}) = J \left( \bar{q}, \bar{u} \right)
  \end{equation*}
  and hence $(q^H, \bar{u}_\infty)$ is indeed a global minimizer of
  problem~\eqref{eq:obstacle_problem_mpcc}.
\end{proof}
Note that without further arguments $H$-convergence only provides the weak convergence
\begin{equation*}
  \begin{aligned}
    \bar{q}_{\gamma} \nabla \bar{u}_{\gamma} \rightharpoonup q^H \nabla \bar{u}  \text{ in } L^2 \left( \Omega , \mathbb{R}^2 \right)
  \end{aligned}
\end{equation*}
where the control $\bar{q}_\gamma$ $H$-converges towards the $H$-limit $q^H$.
When considering optimality conditions, it is important to note that
such an $H$-limit of the control is not always equivalent to the
corresponding weak limit,
however, using the properties of the problem considered in this work,
we can bootstrap $H$-convergence, utilizing techniques from \cite[Theorem 3.2]{DeckelnickHinze:2011}, to prove
stronger regularity of the control and the state.

\begin{theorem}\label{thm:convergence_regularization}
  If $\gamma \to \infty$, then there is a subsequence of solutions
  $(\bar{q}_\gamma , \bar{u}_\gamma) \in Q^{\rm{ad}} \times H^1_0 (\Omega)$ to
  problem~\eqref{eq:reg_obstacle_problem}, with corresponding adjoint
  $\bar{p}_{\gamma} \in H_0^1(\Omega)$ as defined in
  Proposition~\ref{prop:opt_conditions_regularized}, such that
  \begin{subequations}
    \label{eq:convergence_regularization:All}
    \begin{align}
      \bar{q}_{\gamma} &\to \bar{q}  &&\text{ in } L^p \left( \Omega, \mathbb{R}^{2 \times 2}_{\text{sym}} \right) \text{ for all } 2 \leq p < \infty, \label{eq:convergence_regularization:1}\\
      \bar{u}_\gamma &\to \bar{u} &&\text{ in } W^{1,p} \left( \Omega
      \right) \text{ for all } 2 \le p < \widehat{p}, \label{eq:convergence_regularization:2}\\
      \bar{p}_{\gamma} &\rightharpoonup \bar{p} &&\text{ in }  W^{1,2} \left( \Omega \right), \label{eq:convergence_regularization:3}
    \end{align}
  \end{subequations}
  for some $(\bar{q}, \bar{u}, \bar{p}) \in Q^{\rm{ad}} \times H^1_0 (\Omega) \times H^1_0 (\Omega)$.
\end{theorem}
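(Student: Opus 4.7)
The plan is to chain the qualitative results of Theorem~\ref{thm:h_limit_optimal} with a Radon--Riesz argument driven by the projection characterization of the optimal control, and then to bootstrap the resulting strong $L^p$ convergence of $\bar{q}_\gamma$ into strong convergence of the state. First I would extract subsequences: by $H$-sequential compactness of $Q^{\rm{ad}}$ pick $\bar{q}_\gamma \hto q^H$; by Lemmas~\ref{lem:uniform_bound_l2_state}, \ref{lem:uniform_bound_lp_state} and~\ref{lem:uniform_bound_adjoint} extract further subsequences with weak limits $\bar{u} \in W^{1,p}_0(\Omega)$ and $\bar{p} \in H^1_0(\Omega)$. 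In two dimensions the compact embeddings $W^{1,p}(\Omega) \hookrightarrow C(\overline{\Omega})$ (for $p>2$) and $H^1_0(\Omega) \hookrightarrow L^s(\Omega)$ ($s<\infty$) turn these into strong convergences $\bar{u}_\gamma \to \bar{u}$ in $L^\infty(\Omega)$ and $\bar{p}_\gamma \to \bar{p}$ in every $L^s(\Omega)$. Theorem~\ref{thm:h_limit_optimal} in turn identifies $(q^H,\bar{u})$ as a global minimizer of~\eqref{eq:obstacle_problem_mpcc} and yields the value convergence $J(\bar{q}_\gamma,\bar{u}_\gamma)\to J(q^H,\bar{u})$.

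Since $\bar{u}_\gamma \to \bar{u}$ strongly in $L^2(\Omega)$, the tracking term of $J$ passes to the limit, forcing $\|\bar{q}_\gamma\|^2 \to \|q^H\|^2$. The $L^\infty$ bound inherent in $Q^{\rm{ad}}$ also provides, up to a further subsequence, a weak $L^2$ limit $\bar{q}_\gamma \rightharpoonup q^*$. Once the identification $q^* = q^H$ is available, weak convergence together with norm convergence in the Hilbert space $L^2(\Omega;\mathbb{R}^{2\times 2}_{\sym})$ forces strong $L^2$ convergence by the Radon--Riesz theorem, and interpolation with the uniform $L^\infty$ bound then upgrades this to strong convergence in every $L^p(\Omega;\mathbb{R}^{2\times 2}_{\sym})$ with $p<\infty$, which is exactly~\eqref{eq:convergence_regularization:1}.

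The identification $q^* = q^H$ is the \emph{main obstacle} and the step where I expect the most work. I would mimic the strategy of~\cite[Theorem~3.2]{DeckelnickHinze:2011}, exploiting the pointwise projection formula $\bar{q}_\gamma = P_{\rm{ad}}(\tfrac{1}{\alpha}\nabla\bar{u}_\gamma\otimes\nabla\bar{p}_\gamma)$ from the remark following Corollary~\ref{crl:opt_conditions_regularized_extended}. The difficulty is that the outer product $\nabla\bar{u}_\gamma\otimes\nabla\bar{p}_\gamma$ is a weak-times-weak product of gradients, so it does not pass to the limit componentwise. The natural tool is the div--curl lemma of Murat--Tartar applied to pairs such as $(\nabla\bar{u}_\gamma,\bar{q}_\gamma\nabla\bar{p}_\gamma)$ and $(\bar{q}_\gamma\nabla\bar{u}_\gamma,\nabla\bar{p}_\gamma)$: the curls vanish and the divergences are controlled in $H^{-1}(\Omega)$ by the state and adjoint equations together with Lemmas~\ref{lem:bounds_pde_reg_obstacle_problem} and~\ref{lem:mu_bound}. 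Combined with the $H$-convergence of $\bar{q}_\gamma$, this lets one pass to the limit in bilinear expressions contracted against smooth test tensors; matching the resulting projection characterizations for $q^*$ and $q^H$ and using the Lipschitz continuity of $P_{\rm{ad}}$ yields the desired equality.

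Once $\bar{q}_\gamma \to \bar{q} := q^H$ strongly in every $L^p$, $p<\infty$, I would close the argument by testing the difference equation
\begin{equation*}
  -\nabla\cdot(\bar{q}_\gamma\nabla(\bar{u}_\gamma-\bar{u})) = (\bar{\lambda}-r(\gamma;\bar{u}_\gamma)) + \nabla\cdot((\bar{q}_\gamma-\bar{q})\nabla\bar{u})
\end{equation*}
against $\bar{u}_\gamma-\bar{u}$. The right-hand-side pairing vanishes because $-r(\gamma;\bar{u}_\gamma)\rightharpoonup \bar{\lambda}$ weakly in $L^2$ paired with the strong $L^2$ convergence of $\bar{u}_\gamma$, and because $(\bar{q}_\gamma-\bar{q})\nabla\bar{u} \to 0$ strongly in $L^2$ paired with the bounded weakly convergent $\nabla(\bar{u}_\gamma-\bar{u})$. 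Coercivity with $q_{\min}$ then gives strong $H^1_0(\Omega)$ convergence, and re-running the Gröger estimate of Lemma~\ref{lem:uniform_bound_lp_state} for this linear difference equation upgrades it to strong $W^{1,p}(\Omega)$ convergence for every $2\le p<\widehat{p}$, establishing~\eqref{eq:convergence_regularization:2}. The weak $W^{1,2}$ convergence~\eqref{eq:convergence_regularization:3} of $\bar{p}_\gamma$ was already obtained in the first step.
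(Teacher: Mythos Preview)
Your overall architecture---extract an $H$-convergent subsequence, use Theorem~\ref{thm:h_limit_optimal} to get value convergence, deduce $\|\bar{q}_\gamma\|\to\|q^H\|$, and then bootstrap---is the right one, and your treatment of~\eqref{eq:convergence_regularization:2} and~\eqref{eq:convergence_regularization:3} is sound (the paper uses a Cauchy-sequence argument for the state rather than testing with $\bar{u}_\gamma-\bar{u}$, but your route works too). The problem is your ``main obstacle'', the identification $q^*=q^H$.

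The div--curl/projection strategy you outline has two concrete gaps. First, the divergence of $\bar{q}_\gamma\nabla\bar{p}_\gamma$ equals $-(\bar{u}_\gamma-u_d-\bar{\mu}_\gamma)$, and by Lemma~\ref{lem:mu_bound} the multiplier $\bar{\mu}_\gamma$ is only bounded in $L^1(\Omega)$, not in $L^2(\Omega)$; you do not have the $H^{-1}$ compactness that the div--curl lemma requires on this factor. Second, even if div--curl applied, it delivers distributional convergence of the \emph{scalar} products $\nabla\bar{u}_\gamma\cdot(\bar{q}_\gamma\nabla\bar{p}_\gamma)$, not of the matrix-valued outer product $\nabla\bar{u}_\gamma\otimes\nabla\bar{p}_\gamma$ componentwise; and since $P_{\rm{ad}}$ is nonlinear, weak convergence of the argument would not yield convergence of the projection anyway. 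So this route, as sketched, does not close.

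The paper avoids the identification entirely. The key observation you are missing is the standard ordering between $H$-limit and weak-$*$ limit: $q^H(x)\preccurlyeq q^*(x)$ a.e.\ (see \cite[Theorem~5]{Tartar:2018} or \cite[Theorem~1.2.14]{Allaire:2002}). Since both matrices are positive semidefinite, this gives $\trace\bigl(q^H(q^*-q^H)\bigr)\ge 0$ pointwise and hence $\|q^H\|^2\le(q^H,q^*)$. One then expands
\[
\tfrac{1}{2}\|\bar{u}-\bar{u}_\gamma\|^2+\tfrac{\alpha}{2}\|q^H-\bar{q}_\gamma\|^2
= J(q^H,\bar{u})+J(\bar{q}_\gamma,\bar{u}_\gamma)-\bigl(\bar{u}-u_d,\bar{u}_\gamma-u_d\bigr)-\alpha(q^H,\bar{q}_\gamma),
\]
passes to the limit using $J(\bar{q}_\gamma,\bar{u}_\gamma)\to J(q^H,\bar{u})$, $\bar{u}_\gamma\to\bar{u}$ in $L^2$, and $(q^H,\bar{q}_\gamma)\to(q^H,q^*)\ge\|q^H\|^2$, and obtains $\|q^H-\bar{q}_\gamma\|\to 0$ directly. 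The equality $q^*=q^H$ then falls out \emph{a posteriori}. Replace your identification step by this inequality and the rest of your plan goes through.
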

\begin{proof}
  First we prove strong $L^2(\Omega, \mathbb{R}^{2 \times
    2}_{\text{sym}})$ convergence for a subsequence of the
  control. Let $q^H$ be the H-limit and let $\bar{u}$ be the
  $L^2(\Omega)$ limit of subsequences of control $\bar{q}_\gamma$ and
  state $\bar{u}_\gamma$ respectively.
  To consider norm convergence, we further note that the control is
  weak$*$ convergent to some $q^*$ in $L^\infty
  (\Omega, \mathbb{R}^{2\times 2}_{\sym})$ which gives us $(q_H \xi, \xi) \leq (q^*
  \xi, \xi)$ for all $\xi \in \mathbb{R}^2$,
  by~\cite[Theorem~5]{Tartar:2018} or~\cite[Theorem~1.2.14]{Allaire:2002}.
  In particular, this allows us to estimate that $\|q^H\|^2 \leq (q^H,q^*)$ since
  \begin{equation*}
    \begin{aligned}
      0 \leq \trace(q^H(q^*-q^H)) = \trace (q^H q^*) - \trace(q^H q^H) = (q^H, q^*) - \|q^H\|^2,
    \end{aligned}
  \end{equation*}
  where the first inequality holds by~\cite[Theorem
  7.5.4]{HornJohnson:1990}.

  Based on this estimate we can prove norm convergence by the
  following calculations
  \begin{equation*}
    \begin{aligned}
      \frac{1}{2} \left\| \bar{u} - \bar{u}_{\gamma} \right\|^2 &+ \frac{\alpha}{2} \left\| q^H - \bar{q}_{\gamma} \right\|^2\\
      &= \frac{1}{2} \left\| (\bar{u} - u_d) - (\bar{u}_{\gamma} - u_d) \right\|^2 + \frac{\alpha}{2} \left\| q^H - \bar{q}_{\gamma} \right\|^2\\
      &= \frac{1}{2} \left\| \bar{u} - u_d \right\|^2 - \left( (\bar{u} - u_d), (\bar{u}_{\gamma} - u_d) \right) + \frac{1}{2} \left\| \bar{u}_{\gamma} - u_d \right\|^2\\
      &\quad + \frac{\alpha}{2} \left\| q^H \right\|^2 - \alpha \left(q^H, \bar{q}_\gamma \right) + \frac{\alpha}{2} \left\|\bar{q}_{\gamma} \right\|^2\\
      &= J\left( q^H, \bar{u} \right) + J\left( \bar{q}_\gamma , \bar{u}_\gamma  \right)- \left( (\bar{u} - u_d), (\bar{u}_{\gamma} - u_d) \right) - \alpha \left(q^H, \bar{q}_\gamma \right)\\
      &\to 2 J\left( q^H, \bar{u} \right) - \left\| \bar{u} - u_d \right\|^2 - \alpha \left(q^H, q^* \right)\\
      &\leq 2 J\left( q^H, \bar{u} \right) - \left\| \bar{u} - u_d \right\|^2 - \alpha \left\|q^H \right\|^2\\
      &= 2 J\left( q^H, \bar{u} \right) - 2 J\left( q^H, \bar{u} \right)\\
      &= 0.
    \end{aligned}
  \end{equation*}
  For the limit in this argument, we consider that by
  Theorem~\ref{thm:h_limit_optimal} the $H$-limit is a global optimum
  of the original problem, and
  \begin{equation*}
    \begin{aligned}
      J(\bar{q}_\gamma, \bar{u}_\gamma) \to J(q^H, \bar{u}).
    \end{aligned}
  \end{equation*}
  Further, since the state $\bar{u}_\gamma \rightharpoonup \bar{u}$
  weakly in $H^1_0(\Omega)$ by Lemma~\ref{lem:uniform_bound_l2_state},
  the compact imbedding $H^1_0(\Omega) \hookrightarrow L^2(\Omega)$ gives
  \begin{equation*}
    \left\| \bar{u}_\gamma - u_d \right\| \to \left\| \bar{u} - u_d \right\| \text{ in }L^2(\Omega).
  \end{equation*}

  With the norm convergence we can now conclude that the subsequence
  $\bar{q}_\gamma$ converges strongly in $L^2(\Omega, \mathbb{R}^{2
    \times 2}_{\text{sym}})$, resulting in
  \begin{equation*}
    \bar{q}_\gamma \to q^H = q^* \text{ in } L^2 (\Omega, \mathbb{R}^{2 \times 2}_{\text{sym}}),
  \end{equation*}
  since strong convergence ensures that both limits coincide,
  see~\cite[Lemma~1.2.22]{Allaire:2002}.

  Given the strong $L^2(\Omega, \mathbb{R}^{2 \times 2}_{\text{sym}})$
  convergence of the control variable, we can now utilize the bounds on the set
  of admissible controls, i.e., $Q^{\rm{ad}}$, which ensure that
  $\bar{q}_\gamma \in L^\infty (\Omega, \mathbb{R}^{2 \times 2}_{\text{sym}})$
  independent of $\gamma$. Thus, utilizing the Riesz-Thorin theorem, we get
  \begin{equation*}
    \bar{q}_\gamma \to \bar{q} \text{ in } L^p (\Omega, \mathbb{R}^{2 \times 2}_{\text{sym}}) \text{ for all } 2 \leq p < \infty.
  \end{equation*}

  Now considering the state $\bar{u}_\gamma$, we utilize the
  $W^{1,t}(\Omega)$ bound, for any $t < \widehat{p}$, from Lemma~\ref{lem:uniform_bound_lp_state}
  to show that $\bar{u}_\gamma$ is a Cauchy sequence in
  $W^{1,p}(\Omega)$ for all $p < t$. First we compute:
  \begin{equation*}
    \begin{aligned}
      (\bar{q}_{\gamma_1} &\nabla \left( \bar{u}_{\gamma_1} - \bar{u}_{\gamma_2} \right), \nabla \varphi )\\
      &= \left( \bar{q}_{\gamma_1} \nabla \bar{u}_{\gamma_1} - \bar{q}_{\gamma_2} \nabla \bar{u}_{\gamma_2}
        + \bar{q}_{\gamma_2} \nabla \bar{u}_{\gamma_2} - \bar{q}_{\gamma_1} \nabla \bar{u}_{\gamma_2}, \nabla \varphi  \right)\\
      &= \left( \bar{q}_{\gamma_1} \nabla \bar{u}_{\gamma_1} , \nabla \varphi \right) - \left( \bar{q}_{\gamma_2} \nabla \bar{u}_{\gamma_2} , \nabla \varphi \right)
      + \left( \bar{q}_{\gamma_2} \nabla \bar{u}_{\gamma_2} , \nabla \varphi \right) - \left( \bar{q}_{\gamma_1} \nabla \bar{u}_{\gamma_2} , \nabla \varphi \right)\\
      &= \left( f , \varphi \right) + \left( \gamma_1 \maxgammaone \left( \psi - \bar{u}_{\gamma_1} \right) , \varphi \right)
      - \left( f, \varphi \right) - \left( \gamma_2 \maxgammatwo \left( \psi - \bar{u}_{\gamma_2} \right) , \varphi \right)\\
      &\quad \quad + \left( \left( \bar{q}_{\gamma_2} - \bar{q}_{\gamma_1} \right) \nabla \bar{u}_{\gamma_2} , \nabla \varphi \right)\\
      &= \left( \gamma_1 \maxgammaone \left( \psi - \bar{u}_{\gamma_1} \right)
        - \gamma_2 \maxgammatwo \left( \psi - \bar{u}_{\gamma_2} \right) , \varphi \right)
      + \left( \left( \bar{q}_{\gamma_2} - \bar{q}_{\gamma_1} \right) \nabla \bar{u}_{\gamma_2} , \nabla \varphi \right)
    \end{aligned}
  \end{equation*}
  Because $\bar{q}_{\gamma_1}$ is bounded, we proceed analogously to the proof of Lemma~\ref{lem:uniform_bound_lp_state} and utilize Gröger-regularity to formulate the following upper bound
  \begin{equation*}
    \begin{aligned}
      \| \nabla \left( \bar{u}_{\gamma_1} - \bar{u}_{\gamma_2} \right) &\|_{L^p ( \Omega )}\\
      &\lesssim \| r(\gamma_1; \bar{u}_{\gamma_1}) - r(\gamma_2; \bar{u}_{\gamma_2}) \|_{W^{-1,p}(\Omega)}
      + \| \bar{q}_{\gamma_2} - \bar{q}_{\gamma_1} \|_{L^r(\Omega)} \| \nabla \bar{u}_{\gamma_2} \|_{L^t(\Omega)}
    \end{aligned}
  \end{equation*}
  with $\frac{1}{r} + \frac{1}{t} = \frac{1}{p}$.

  By Lemma~\ref{lem:uniform_bound_lp_state} and
  $\bar{q}_\gamma$ converges in $L^r (\Omega)$ for all $2 \leq r <
  \infty$, hence the above $t$ can be chosen to satisfy $t \in (p,\widehat{p})$ and thus
  $\bar{u}_\gamma$ is bounded in $W^{1,t} (\Omega)$.
  Hence, by convergence of $q_\gamma$ the last summand converges to zero.

  Considering the regularization term, we utilize the Sobolev
  imbedding theorem to conclude that the imbedding $W^{1, p}
  (\Omega)\subset L^2 (\Omega)$ is compact. By
  Lemma~\ref{lem:bounds_pde_reg_obstacle_problem} we get an $L^2
  (\Omega)$ bound for the regularization and thus weak convergence in
  $L^2(\Omega)$ of an appropriate subsequence. By compact imbedding we
  then get strong convergence of the regularization terms and can
  conclude that
  \begin{equation*} \| \gamma_1 \maxgammaone \left( \psi - \bar{u}_{\gamma_1} \right)
    - \gamma_2 \maxgammatwo \left( \psi - \bar{u}_{\gamma_2} \right)
    \|_{W^{-1,p}} \to 0.
  \end{equation*}
  Therefore we can conclude that
  \begin{equation*}
    \| \nabla \left( \bar{u}_{\gamma_1} - \bar{u}_{\gamma_2} \right)
    \|_{L^p ( \Omega )} \to 0,
  \end{equation*}
  so $\bar{u}_\gamma$ is
  a Cauchy-sequence and converges in $W^{1,p}(\Omega)$.

  Weak convergence of the adjoint follows directly from its uniform boundedness as specified in Lemma~\ref{lem:uniform_bound_adjoint}.

\end{proof}

These improved regularity results then allow us to prove strong
convergence results for the coupled terms of control and state, as well as the Lagrange multiplier
$\lambda_\gamma$ as defined in
Corollary~\ref{crl:opt_conditions_regularized_extended}.

\begin{corollary}\label{crl:extended_convergence_regularization}
  If $\gamma \to \infty$, then there is a subsequence of solutions
  $(\bar{q}_\gamma , \bar{u}_\gamma) \in Q^{\rm{ad}} \times H^1_0 (\Omega)$ to
  problem~\eqref{eq:reg_obstacle_problem} as defined in
  Proposition~\ref{prop:opt_conditions_regularized} and multipliers
  $\lambda_\gamma \in L^2 (\Omega)$ as defined in
  Corollary~\ref{crl:opt_conditions_regularized_extended}, such that
  \begin{subequations}
    \label{eq:extended_convergence_regularization:All}
    \begin{align}
      \bar{q}_{\gamma} \nabla \bar{u}_\gamma &\to \bar{q} \nabla \bar{u} &&\text{ in } L^2 \left( \Omega, \mathbb{R}^2 \right), \label{eq:extended_convergence_regularization:4}\\
      \bar{\lambda}_{\gamma} &\to \bar{\lambda} &&\text{ in } L^{2} \left( \Omega \right), \label{eq:extended_convergence_regularization:7}
    \end{align}
  \end{subequations}
  for some $(\bar{q}, \bar{u}, \bar{\lambda}) \in
  Q^{\rm{ad}} \times H^1_0 (\Omega) \times
  H^{-1}(\Omega)$.
\end{corollary}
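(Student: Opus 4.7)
The plan is to prove the two convergences separately, leveraging Theorem~\ref{thm:convergence_regularization} and Lemma~\ref{lem:feasibility_h_limit}.

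For~\eqref{eq:extended_convergence_regularization:4}, I split
\[
  \bar{q}_\gamma\nabla\bar{u}_\gamma - \bar{q}\nabla\bar{u} = (\bar{q}_\gamma - \bar{q})\nabla\bar{u}_\gamma + \bar{q}(\nabla\bar{u}_\gamma - \nabla\bar{u}).
\]
The second summand vanishes in $L^2(\Omega;\mathbb{R}^2)$ because $\bar{q}\in Q^{\rm{ad}}\subset L^\infty$ and $\nabla\bar{u}_\gamma\to\nabla\bar{u}$ strongly in $L^2$ by~\eqref{eq:convergence_regularization:2}. For the first, I choose exponents $r,t\in(2,\infty)$ with $\tfrac{1}{r}+\tfrac{1}{t}=\tfrac{1}{2}$ and $t<\widehat{p}$; Hölder then gives
\[
  \|(\bar{q}_\gamma-\bar{q})\nabla\bar{u}_\gamma\|_{L^2} \le \|\bar{q}_\gamma-\bar{q}\|_{L^r}\,\|\nabla\bar{u}_\gamma\|_{L^t},
\]
where $\|\nabla\bar{u}_\gamma\|_{L^t}$ is uniformly bounded by Lemma~\ref{lem:uniform_bound_lp_state} and $\|\bar{q}_\gamma-\bar{q}\|_{L^r}\to 0$ by~\eqref{eq:convergence_regularization:1}.

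For~\eqref{eq:extended_convergence_regularization:7}, Lemma~\ref{lem:bounds_pde_reg_obstacle_problem} yields boundedness of $\bar{\lambda}_\gamma$ in $L^2(\Omega)$, and Lemma~\ref{lem:feasibility_h_limit} supplies a weak limit $\bar{\lambda}_\gamma\rightharpoonup\bar{\lambda}$ in $L^2(\Omega)$. Moreover, Lemma~\ref{lem:growth_cond} with $s=\infty$ together with $\maxgamma(x)\le\max(0,x)$ from Assumption~\ref{rem:regularization} produces the uniform bound $\|\bar{\lambda}_\gamma\|_{L^\infty}\lesssim\|f\|_{L^\infty}+1$. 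To upgrade weak convergence to strong convergence, I plan to prove norm convergence $\|\bar{\lambda}_\gamma\|\to\|\bar{\lambda}\|$. Testing~\eqref{eq:extended_opt_conditions_regularized:1} with $-\gamma\maxgamma(\psi-\bar{u}_\gamma)=\bar{\lambda}_\gamma\in H^1_0(\Omega)$ and integrating by parts using $\nabla\psi=0$, in exact analogy with the proof of Lemma~\ref{lem:bounds_pde_reg_obstacle_problem}, gives the identity
\[
  \|\bar{\lambda}_\gamma\|^2 + \gamma\bigl({\maxgamma}'(\psi-\bar{u}_\gamma)\,\bar{q}_\gamma\nabla\bar{u}_\gamma,\,\nabla\bar{u}_\gamma\bigr) = (f,\bar{\lambda}_\gamma).
\]
Since the second summand on the left is nonnegative and $(f,\bar{\lambda}_\gamma)\to(f,\bar{\lambda})$ by weak $L^2$-convergence, this yields $\limsup_\gamma\|\bar{\lambda}_\gamma\|^2 \le (f,\bar{\lambda})$.

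The hard part is to identify $(f,\bar{\lambda})$ with $\|\bar{\lambda}\|^2$. Rewriting the limit equation as $\bar{\lambda}=f+\nabla\cdot(\bar{q}\nabla\bar{u})$ (valid as an $L^2$-identity since $\nabla\cdot(\bar{q}\nabla\bar{u})\in L^2$), the target becomes $(\bar{\lambda},\nabla\cdot(\bar{q}\nabla\bar{u}))=0$. By the complementarity in Lemma~\ref{lem:feasibility_h_limit}, $\bar{\lambda}$ is supported on the coincidence set $A=\{\bar{u}=\psi\}$; Stampacchia's lemma then forces $\nabla\bar{u}=0$ almost everywhere on $A$, so $\bar{q}\nabla\bar{u}=0$ a.e.\ on $A$, and the standard argument (testing against $\phi\in C_c^\infty(\operatorname{int} A)$ and integrating by parts) gives $\nabla\cdot(\bar{q}\nabla\bar{u})=0$ a.e.\ on $\operatorname{int} A$. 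The delicate point I anticipate is showing that the contribution from $A\setminus\operatorname{int} A$ is Lebesgue-negligible; this is a standard consequence of free-boundary regularity for obstacle problems and should carry over under the $W^{1,p}$-regularity established in Theorem~\ref{thm:convergence_regularization}. Once this identification is in hand, the weak lower semicontinuity $\|\bar{\lambda}\|^2\le\liminf_\gamma\|\bar{\lambda}_\gamma\|^2$ forces $\|\bar{\lambda}_\gamma\|\to\|\bar{\lambda}\|$, and the weak convergence then upgrades to strong $L^2(\Omega)$-convergence.
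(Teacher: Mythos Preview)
Your argument for~\eqref{eq:extended_convergence_regularization:4} is correct and essentially identical to the paper's: both combine the strong $L^s$-convergence of $\bar q_\gamma$ for arbitrarily large $s$ with the strong $W^{1,p}$-convergence of $\bar u_\gamma$ for some $p>2$ and use H\"older with $\tfrac1s+\tfrac1p=\tfrac12$.

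For~\eqref{eq:extended_convergence_regularization:7} the situation is different. The paper does \emph{not} prove strong $L^2(\Omega)$-convergence of $\bar\lambda_\gamma$; it proves only strong $H^{-1}(\Omega)$-convergence, obtained in one line by applying the divergence to the already established strong $L^2$-convergence~\eqref{eq:extended_convergence_regularization:4}:
\[
  \bar\lambda_\gamma=\nabla\cdot(\bar q_\gamma\nabla\bar u_\gamma)+f\;\to\;\nabla\cdot(\bar q\nabla\bar u)+f=\bar\lambda\quad\text{in }H^{-1}(\Omega).
\]
The label ``$L^2(\Omega)$'' in the displayed statement is inconsistent with the target space $H^{-1}(\Omega)$ listed for $\bar\lambda$ immediately below, and with every subsequent use of the corollary in the paper (see, e.g., the proof of~\eqref{eq:optimality_conditions_obstacle_problem_adjoint:2}, which invokes ``strong convergence of $\bar\lambda_\gamma$ in $H^{-1}(\Omega)$''). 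So you have been trying to prove a claim stronger than the one the paper actually establishes and uses.

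Your route towards genuine strong $L^2$-convergence also has a real gap. The identity $\|\bar\lambda_\gamma\|^2+\gamma({\maxgamma}'(\psi-\bar u_\gamma)\bar q_\gamma\nabla\bar u_\gamma,\nabla\bar u_\gamma)=(f,\bar\lambda_\gamma)$ and the resulting $\limsup$-estimate are fine, but the identification $(f,\bar\lambda)=\|\bar\lambda\|^2$ hinges on $|A\setminus\operatorname{int}A|=0$ for the coincidence set $A=\{\bar u=\psi\}$. This is \emph{not} a standard consequence of free-boundary regularity under the hypotheses here: the coefficient matrix $\bar q$ is merely $L^\infty$ (not even continuous), and the right-hand side is only $L^\infty$. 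The classical free-boundary results (Caffarelli and successors) require at least H\"older-continuous coefficients, and without such structure the topological boundary of $A$ can have positive Lebesgue measure. The $W^{1,p}$-regularity of $\bar u$ from Theorem~\ref{thm:convergence_regularization} gives continuity of $\bar u$ (hence closedness of $A$) but says nothing about the measure of $\partial A$. In short: for the purposes of this paper, replace your second part by the paper's one-line $H^{-1}$ argument; if you want actual strong $L^2$-convergence of $\bar\lambda_\gamma$, you would need substantially stronger assumptions on $\bar q$ and $f$ than the paper imposes.
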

\begin{proof}
  The results of this corollary are a based on the results from
  Theorem~\ref{thm:convergence_regularization}.

  From~\eqref{eq:convergence_regularization:2} we know that the state
  $\bar{u}_\gamma$ converges strongly in $W^{1,p}$ for some $p>2$. Further
  from~\eqref{eq:convergence_regularization:1} we know that $\bar{q}_\gamma$
  converges strongly in $L^s(\Omega, \mathbb{R}^{2 \times
    2}_{\text{sym}})$ for all $s \in [2, \infty)$. In particular, the
  control converges strongly for an $s$ such that $\frac{1}{2}=
  \frac{1}{p} + \frac{1}{s}$, which allows us to conclude that
  $\bar{q}_{\gamma} \nabla \bar{u}_\gamma \to \bar{q} \nabla \bar{u}$
  in $L^2 \left( \Omega, \mathbb{R}^2 \right)$,
  proving~\eqref{eq:extended_convergence_regularization:4}.

  Utilizing the strong $L^2(\Omega)$ convergence shown
  in~\eqref{eq:extended_convergence_regularization:4} we get
  \begin{equation*}
    \begin{aligned}
      \bar{\lambda}_{\gamma} =  \nabla \cdot \left( \bar{q}_\gamma \nabla \bar{u}_\gamma \right) + f  \to \nabla \cdot \left( \bar{q} \nabla \bar{u} \right) + f = \bar{\lambda} \text{ in } H^{-1} \left( \Omega \right),
    \end{aligned}
  \end{equation*}
  proving~\eqref{eq:extended_convergence_regularization:7}.

\end{proof}

Based on these limits it is easy to see that the limit of regularized
solution is admissible in the original problem.
\begin{theorem}
  Any limit point $(\bar{q}, \bar{u}, \bar{\lambda}) \in Q^{\rm{ad}} \times H^1_0 (\Omega) \times L^2 (\Omega)$ as obtained in
  Corollary~\ref{crl:extended_convergence_regularization}, fulfills
  \begin{subequations}
    \label{eq:optimality_conditions_obstacle_problem:All}
    \begin{align}
      - \nabla \cdot \left( \bar{q} \nabla \bar{u} \right) &= f +
      \bar{\lambda} & & \text{in } H^{-1}(\Omega), \label{eq:optimality_conditions_obstacle_problem:1}\\
      \bar{u} &\geq \psi &&\text{q.e. in } \Omega, \label{eq:optimality_conditions_obstacle_problem:2}\\
      \bar{\lambda} &\geq 0 &&\text{in } L^2(\Omega), \label{eq:optimality_conditions_obstacle_problem:3}\\
      \left( \bar{\lambda}, \bar{u} - \psi \right) &= 0, \label{eq:optimality_conditions_obstacle_problem:4}
    \end{align}
    with $\psi \in \mathbb{R}$ and $f \in L^\infty(\Omega)$.
  \end{subequations}
\end{theorem}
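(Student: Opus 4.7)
My plan is to pass to the limit $\gamma\to\infty$ in the extended regularized optimality system from Corollary~\ref{crl:opt_conditions_regularized_extended}, using the strong convergences assembled in Corollary~\ref{crl:extended_convergence_regularization} together with Theorem~\ref{thm:convergence_regularization}. All four conditions are exactly feasibility statements for the complementarity formulation, and an analogous passage has already been carried out under weaker hypotheses in Lemma~\ref{lem:feasibility_h_limit}; the present argument is mostly a reorganization of those ingredients, with strong rather than weak convergences available where needed.

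For the PDE~\eqref{eq:optimality_conditions_obstacle_problem:1}, I would start from $-\nabla\cdot(\bar{q}_\gamma\nabla\bar{u}_\gamma) = f - \bar{\lambda}_\gamma$. The strong convergence $\bar{q}_\gamma\nabla\bar{u}_\gamma\to\bar{q}\nabla\bar{u}$ in $L^2(\Omega,\mathbb{R}^2)$ from~\eqref{eq:extended_convergence_regularization:4} gives convergence of the divergence in $H^{-1}(\Omega)$, while~\eqref{eq:extended_convergence_regularization:7} identifies the limit of $\bar{\lambda}_\gamma$ (up to the overall sign convention used in the theorem statement, i.e. renaming $-\bar{\lambda}$ of the corollary as $\bar{\lambda}$ here, so that $\bar{\lambda}\in L^2(\Omega)$). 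For~\eqref{eq:optimality_conditions_obstacle_problem:2}, Lemma~\ref{lem:growth_cond} yields
\[
\|\max(\psi-\bar{u}_\gamma,0)\|_{L^\infty(\Omega)}\lesssim \frac{1}{\gamma}\bigl(\|f\|_{L^\infty(\Omega)}+1\bigr)\to 0,
\]
and the strong $W^{1,p}(\Omega)$-convergence from~\eqref{eq:convergence_regularization:2} (with $p>2$, so $W^{1,p}(\Omega)\hookrightarrow C(\overline\Omega)$) transfers the pointwise violation to the limit, giving $\bar{u}\ge\psi$ quasi-everywhere. For the sign condition~\eqref{eq:optimality_conditions_obstacle_problem:3}, since $-\bar{\lambda}_\gamma=\gamma\maxgamma(\psi-\bar{u}_\gamma)\ge 0$ pointwise a.e., the strong $L^2$-limit preserves nonnegativity.

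The main obstacle is the complementarity~\eqref{eq:optimality_conditions_obstacle_problem:4}, and here I would replicate the sandwich estimate from Lemma~\ref{lem:feasibility_h_limit}. By~\eqref{eq:smoothedMax_properties_leq} and $\maxgamma\ge 0$,
\[
0\le \gamma\bigl(\maxgamma(\psi-\bar{u}_\gamma),\maxgamma(\psi-\bar{u}_\gamma)\bigr) \le \gamma\bigl(\maxgamma(\psi-\bar{u}_\gamma),\psi-\bar{u}_\gamma\bigr) = \bigl(-\bar{\lambda}_\gamma,\psi-\bar{u}_\gamma\bigr).
\]
Using $\bar{\lambda}_\gamma\to\bar{\lambda}$ in $L^2(\Omega)$ together with strong $L^2$-convergence of $\bar{u}_\gamma$ (via the compact embedding $H^1_0(\Omega)\hookrightarrow L^2(\Omega)$, or directly from~\eqref{eq:convergence_regularization:2}), this passes to $(\bar{\lambda},\bar{u}-\psi)\le 0$ in the limit (modulo the sign renaming noted above). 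Combined with $\bar{\lambda}\ge 0$ and $\bar{u}\ge\psi$, which force $(\bar{\lambda},\bar{u}-\psi)\ge 0$, we conclude equality, completing the argument. I anticipate the only care-point is keeping the sign conventions between $\bar{\lambda}_\gamma=r(\gamma;\bar{u}_\gamma)$ and the multiplier convention $-\nabla\cdot(\bar{q}\nabla\bar{u}) = f+\bar{\lambda}$ of the theorem consistent throughout.
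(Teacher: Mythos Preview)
Your proposal is correct and follows essentially the same route as the paper's proof: the paper derives~\eqref{eq:optimality_conditions_obstacle_problem:1} directly from Corollary~\ref{crl:extended_convergence_regularization} and obtains~\eqref{eq:optimality_conditions_obstacle_problem:2}--\eqref{eq:optimality_conditions_obstacle_problem:4} by invoking the convergences of Theorem~\ref{thm:convergence_regularization} and Corollary~\ref{crl:extended_convergence_regularization} and then arguing ``analogously'' to the feasibility argument already carried out (the sandwich estimate you reproduce from Lemma~\ref{lem:feasibility_h_limit}). Your write-up simply makes these references explicit, and your flag about the sign convention between $\bar{\lambda}_\gamma = r(\gamma;\bar{u}_\gamma)\le 0$ in Corollary~\ref{crl:opt_conditions_regularized_extended} and $\bar{\lambda}\ge 0$ in the theorem is a genuine point worth tracking.
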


\begin{proof}
  The condition~\eqref{eq:optimality_conditions_obstacle_problem:1} is a result
  of Corollary~\ref{crl:extended_convergence_regularization}.

  The system of
  conditions~\eqref{eq:optimality_conditions_obstacle_problem:2}--\eqref{eq:optimality_conditions_obstacle_problem:4} is fulfilled
  because the relevant limits from
  Theorem~\ref{thm:convergence_regularization} and
  Corollary~\ref{crl:extended_convergence_regularization} allow us to
  proceed analogously to Lemma~\ref{lem:convergence_state_setcontrol}
  without the need for a given control.
\end{proof}

Further we can formulate the limit of the sequence of control equations of the regularized problems.
\begin{corollary}\label{cor:control_equation}
  If $\gamma \to \infty$, then there is a subsequence of solutions
  $(\bar{q}_\gamma , \bar{u}_\gamma) \in Q^{\rm{ad}} \times H^1_0 (\Omega)$ to
  problem~\eqref{eq:reg_obstacle_problem}, with corresponding adjoint
  $\bar{p}_{\gamma} \in H_0^1(\Omega)$ as defined in
  Proposition~\ref{prop:opt_conditions_regularized}, such that
  \begin{equation}\label{eq:extended_convergence_regularization_control}
    \begin{aligned}
      \nabla \bar{u}_{\gamma} \otimes \nabla \bar{p}_{\gamma} &\rightharpoonup \nabla \bar{u} \otimes \nabla \bar{p} &&\text{ in } L^s \left( \Omega, \mathbb{R}^{2 \times 2}_{\text{sym}} \right) \text{ for an } 1 < s < \infty,
    \end{aligned}
  \end{equation}
  for some $(\bar{u}, \bar{p}) \in
  H^1_0 (\Omega) \times H^1_0 (\Omega)$ and any such limit point fulfills
  \begin{equation}\label{eq:optimality_conditions_obstacle_problem_control}
    \begin{aligned}
      \left( \alpha \bar{q} - \nabla \bar{u} \otimes \nabla \bar{p} \right) (q - \bar{q}) &\geq 0 &&\forall q \in Q^{\rm{ad}}.
    \end{aligned}
  \end{equation}
  where $\bar{q} \in Q^{\rm{ad}}$ is the limit of the sequences of controls $\bar{q}_\gamma$ as specified in Theorem~\ref{thm:convergence_regularization}.
\end{corollary}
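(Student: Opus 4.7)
The plan is to leverage the convergences already assembled in Theorem~\ref{thm:convergence_regularization} so that one factor of the tensor product converges strongly and the other weakly in Hölder-conjugate exponents, and then to pass to the limit in the variational inequality \eqref{eq:opt_conditions_regularized:3} by a standard strong--weak pairing.

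To prove \eqref{eq:extended_convergence_regularization_control}, I would fix any $p \in (2,\widehat{p})$ and define $s \in (1,2)$ by $\frac{1}{s} = \frac{1}{p} + \frac{1}{2}$. By \eqref{eq:convergence_regularization:2}, $\nabla \bar{u}_\gamma \to \nabla \bar{u}$ strongly in $L^p(\Omega;\mathbb{R}^2)$, while \eqref{eq:convergence_regularization:3} yields $\nabla \bar{p}_\gamma \rightharpoonup \nabla \bar{p}$ weakly in $L^2(\Omega;\mathbb{R}^2)$. For any test tensor $\Phi \in L^{s'}(\Omega;\mathbb{R}^{2\times 2})$ and any indices $i,j$, Hölder's inequality with the relation $\frac{1}{2} = \frac{1}{s'} + \frac{1}{p}$ gives strong $L^2(\Omega)$ convergence of $\Phi_{ij} \partial_i \bar{u}_\gamma$; pairing this with the weakly $L^2$-convergent $\partial_j \bar{p}_\gamma$ and summing over $i,j$ identifies the limit as $\nabla \bar{u} \otimes \nabla \bar{p}$ in the duality with $L^{s'}$, which is exactly the asserted weak $L^s$ convergence.

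For \eqref{eq:optimality_conditions_obstacle_problem_control}, I would pass to the limit in \eqref{eq:opt_conditions_regularized:3}. The purely quadratic part $\alpha(\bar{q}_\gamma, q - \bar{q}_\gamma)$ converges to $\alpha(\bar{q}, q - \bar{q})$ as an immediate consequence of the strong $L^2$ convergence of $\bar{q}_\gamma$ ensured by \eqref{eq:convergence_regularization:1}. For the coupling term, $q - \bar{q}_\gamma$ is uniformly bounded in $L^\infty(\Omega;\mathbb{R}^{2\times 2})$ by the definition of $Q^{\rm{ad}}$ and converges to $q - \bar{q}$ strongly in $L^{s'}$, since \eqref{eq:convergence_regularization:1} provides strong convergence in every finite $L^r$, in particular for $r = s'$. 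Combined with the weak $L^s$ convergence of the tensor product just established, the coupling term passes to the limit, and the inequality $\geq 0$ is preserved.

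The only subtle point I would flag is the exponent bookkeeping: the Gröger-regularity gain from Lemma~\ref{lem:uniform_bound_lp_state}, giving $W^{1,p}$ integrability of the state for some $p>2$, is precisely what makes the strong--weak pairing land in an $L^s$ space with $s>1$; without this improvement the Hölder exponents collapse to $s=1$, and identifying the limit of the tensor product would require equi-integrability-type arguments rather than the clean strong--weak duality used here.
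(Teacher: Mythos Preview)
Your proposal is correct and follows essentially the same approach as the paper: both fix $p\in(2,\widehat{p})$, set $\frac{1}{s}=\frac{1}{p}+\frac{1}{2}$, and combine the strong $W^{1,p}$ convergence of $\bar u_\gamma$ with the weak $H^1_0$ convergence of $\bar p_\gamma$ to obtain weak $L^s$ convergence of the tensor product, then pass to the limit in~\eqref{eq:opt_conditions_regularized:3} using the strong $L^r$ convergence of $\bar q_\gamma$. Your write-up is in fact a bit more explicit than the paper's, which simply asserts the limit of the pairing and then states that~\eqref{eq:optimality_conditions_obstacle_problem_control} follows from~\eqref{eq:extended_convergence_regularization_control} and~\eqref{eq:convergence_regularization:1}.
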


\begin{proof}
  For~\eqref{eq:extended_convergence_regularization_control} we note that, by the $W^{1,p} (\Omega)$
  convergence of $\bar{u}_\gamma$  from~\eqref{eq:convergence_regularization:2}  for some $p>2$ and the weak $W^{1,2} (\Omega)$
  convergence of $\bar{p}_\gamma$
  from~\eqref{eq:convergence_regularization:3}, for $s > 1$ satisfying
  $\frac{1}{s} = \frac{1}{2} + \frac{1}{p}$ it is
  \begin{equation*}
    \begin{aligned}
      (\nabla \bar{u}_\gamma \otimes \nabla \bar{p}_\gamma, w) \to (\nabla \bar{u} \otimes \nabla \bar{p}, w)
    \end{aligned}
  \end{equation*}
  for all $w \in L^{s'}(\Omega , \mathbb{R}_{\text{sym}}^{2 \times 2} )$.
  So we can conclude that
  $\nabla \bar{u}_{\gamma} \otimes \nabla \bar{p}_{\gamma} \rightharpoonup \nabla \bar{u} \otimes \nabla \bar{p}$
  in $L^s \left( \Omega, \mathbb{R}^{2 \times 2}_{\text{sym}} \right)$ for an $s \in (1, \infty)$ with limits
  $\bar{u}, \bar{p} \in H^1_0(\Omega)$ as specified in
  Corollary~\ref{crl:extended_convergence_regularization}.

  The
  equation~\eqref{eq:optimality_conditions_obstacle_problem_control}
  is then a result of the
  convergences~\eqref{eq:extended_convergence_regularization_control}
  and~\eqref{eq:convergence_regularization:1}.
\end{proof}

Finally we consider the limits with regards to the adjoint equation.
\begin{corollary}
  If $\gamma \to \infty$, then there is a subsequence of solutions
  $(\bar{q}_\gamma , \bar{u}_\gamma) \in Q^{\rm{ad}} \times H^1_0 (\Omega)$ to
  problem~\eqref{eq:reg_obstacle_problem}, with corresponding adjoint
  $\bar{p}_{\gamma} \in H_0^1(\Omega)$ as defined in
  Proposition~\ref{prop:opt_conditions_regularized} and multipliers
  $\bar{\lambda}_\gamma, \bar{\mu}_\gamma \in L^2 (\Omega)$ as defined in
  Corollary~\ref{crl:opt_conditions_regularized_extended}, such that
  \begin{subequations}
    \label{eq:extended_convergence_regularization_adjoint:All}
    \begin{align}
      \bar{q}_{\gamma} \nabla \bar{p}_{\gamma} &\rightharpoonup \bar{q} \nabla \bar{p} &&\text{ in } L^s \left( \Omega \right) \text{ for all } 1 \leq s < 2, \label{eq:extended_convergence_regularization_adjoint:1}\\
      \bar{\mu}_{\gamma} &\rightharpoonup \bar{\mu}  &&\text{ in } L^s \left( \Omega \right) \text{ for all } 1 \leq s < 2 \label{eq:extended_convergence_regularization_adjoint:2}
    \end{align}
  \end{subequations}
  for some $(\bar{q}, \bar{p}, \bar{\mu}) \in
  Q^{\rm{ad}} \times H^1_0 (\Omega) \times H^1_0 (\Omega) \times L^2 (\Omega)$ and any such limit point fulfills
  \begin{subequations}
    \label{eq:optimality_conditions_obstacle_problem_adjoint:All}
    \begin{align}
      - \nabla \cdot \left( \bar{q} \nabla \bar{p} \right) &= \bar{u} - u_d - \bar{\mu} & & \text{in } H^{-1}(\Omega), \label{eq:optimality_conditions_obstacle_problem_adjoint:1}\\
      \left( \bar{p} , \bar{\lambda} \right) &= 0, \label{eq:optimality_conditions_obstacle_problem_adjoint:2}\\
      \left( \bar{\mu}, \bar{u}-\psi \right) &= 0, \label{eq:optimality_conditions_obstacle_problem_adjoint:3}
    \end{align}
  \end{subequations}
  with limits $\bar{u} \in H^1_0 (\Omega)$ as specified in
  Theorem~\ref{thm:convergence_regularization} and $\bar{\lambda} \in L^2 (\Omega)$ as specified
  in Corollary~\ref{crl:extended_convergence_regularization} respectively.
\end{corollary}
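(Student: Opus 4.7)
The plan is to pass to the limit in each of~\eqref{eq:extended_convergence_regularization_adjoint:1}--\eqref{eq:optimality_conditions_obstacle_problem_adjoint:3}, building on the convergences provided by Theorem~\ref{thm:convergence_regularization} and Corollary~\ref{crl:extended_convergence_regularization} and on the $L^1(\Omega)$-bound for $\bar{\mu}_\gamma$ from Lemma~\ref{lem:mu_bound}.

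For~\eqref{eq:extended_convergence_regularization_adjoint:1} I would combine the strong $L^r(\Omega;\mathbb{R}^{2\times 2}_{\sym})$-convergence of $\bar{q}_\gamma$ for every finite $r$, see~\eqref{eq:convergence_regularization:1}, with the weak $L^2$-convergence of $\nabla \bar{p}_\gamma$ from~\eqref{eq:convergence_regularization:3}. Since $\bar{q}_\gamma \in Q^{\rm{ad}}$ is uniformly bounded in $L^\infty$ and converges a.e.\ along a subsequence, dominated convergence yields $\bar{q}_\gamma \phi \to \bar{q}\phi$ strongly in $L^2(\Omega;\mathbb{R}^2)$ for every $\phi \in L^2(\Omega;\mathbb{R}^2)$. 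Exploiting the symmetry of $\bar{q}_\gamma$ to rewrite $(\bar{q}_\gamma \nabla \bar{p}_\gamma,\phi) = (\nabla \bar{p}_\gamma, \bar{q}_\gamma\phi)$, one obtains weak $L^2$-convergence of the product, which on the bounded domain $\Omega$ entails weak $L^s$-convergence for every $1 < s < 2$ by restriction of test functions.

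For~\eqref{eq:extended_convergence_regularization_adjoint:2} and~\eqref{eq:optimality_conditions_obstacle_problem_adjoint:1} I would rearrange the regularized adjoint~\eqref{eq:extended_opt_conditions_regularized:3} as $\bar{\mu}_\gamma = (\bar{u}_\gamma - u_d) + \nabla \cdot (\bar{q}_\gamma \nabla \bar{p}_\gamma)$. The first summand converges strongly in $L^2$ by~\eqref{eq:convergence_regularization:2} and the compact embedding $H^1_0 \hookrightarrow L^2$, and the divergence converges in $W^{-1,s}$ for every $1<s<2$ by~\eqref{eq:extended_convergence_regularization_adjoint:1}. Together with the uniform $L^1$-bound of Lemma~\ref{lem:mu_bound} this identifies a distributional limit $\bar{\mu}$; once a uniform $L^s$-bound for some $s > 1$ is secured, reflexivity provides a subsequence with $\bar{\mu}_\gamma \rightharpoonup \bar{\mu}$ in $L^s$ for $1 \le s < 2$. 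Passing to the limit in~\eqref{eq:extended_opt_conditions_regularized:3} then yields~\eqref{eq:optimality_conditions_obstacle_problem_adjoint:1}, and $\bar{\mu} \in L^2$ is read off a posteriori using that $\bar{u} - u_d \in L^2$ and that the divergence on the left-hand side admits an $L^2$-bound analogous to Lemma~\ref{lem:bounds_pde_reg_obstacle_problem} applied to the limit adjoint PDE.

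For the complementarity identities~\eqref{eq:optimality_conditions_obstacle_problem_adjoint:2} and~\eqref{eq:optimality_conditions_obstacle_problem_adjoint:3} I would pass to the limit in the pairings $(\bar{p}_\gamma, \bar{\lambda}_\gamma)$ and $(\bar{\mu}_\gamma, \bar{u}_\gamma - \psi)$. For the first, I combine strong $L^2$-convergence of $\bar{p}_\gamma$ via the compact embedding $H^1_0 \hookrightarrow L^2$ with the convergence $\bar{\lambda}_\gamma \to \bar{\lambda}$ from~\eqref{eq:extended_convergence_regularization:7}, noting that $\bar{\lambda}_\gamma$ is supported on the infeasibility set $\{\bar{u}_\gamma < \psi\}$ which vanishes by Lemma~\ref{lem:growth_cond}; a sign-test argument in the spirit of Lemma~\ref{lem:mu_bound} then forces $\bar{p}$ to vanish wherever $\bar{\lambda}>0$. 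For the second, the support of $\bar{\mu}_\gamma$ lies in the same set, and the pointwise bound $(\psi - \bar{u}_\gamma) \partial_u r(\gamma;\bar{u}_\gamma) \le c_1 \gamma \maxgamma(\psi - \bar{u}_\gamma)$ supplied by~\eqref{eq:smoothedMax_deriv_relation}, together with Lemma~\ref{lem:growth_cond} and strong convergence of $\bar{u}_\gamma$, drives $(\bar{\mu}_\gamma, \bar{u}_\gamma - \psi)$ to zero. The main obstacle I anticipate is upgrading the $L^1$-bound from Lemma~\ref{lem:mu_bound} to a uniform $L^s$-bound for some $s > 1$, which is indispensable for reflexive weak extraction in~\eqref{eq:extended_convergence_regularization_adjoint:2}; I expect this to require a careful Gröger-type bootstrap on the adjoint equation combined with the uniform $L^s$-bound on $\bar{\lambda}_\gamma$ from Lemma~\ref{lem:growth_cond}.
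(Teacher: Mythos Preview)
Your treatment of~\eqref{eq:extended_convergence_regularization_adjoint:1} and~\eqref{eq:optimality_conditions_obstacle_problem_adjoint:1} is essentially the paper's: strong $L^r$-convergence of $\bar q_\gamma$ against weak $H^1_0$-convergence of $\bar p_\gamma$, then read off $\bar\mu$ from the adjoint equation. Where you diverge is in two places.

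First, the ``main obstacle'' you identify --- upgrading the $L^1$-bound on $\bar\mu_\gamma$ to an $L^s$-bound for $s>1$ via a Gr\"oger bootstrap --- is not needed. The paper simply \emph{defines} $\bar\mu := \bar u - u_d + \nabla\cdot(\bar q\nabla\bar p)$ as the limit of the right-hand side of the rearranged adjoint equation; the asserted regularity of $\bar\mu$ is then read off a posteriori from $\bar p\in H^1_0(\Omega)$. No separate weak $L^s$-extraction for $\bar\mu_\gamma$ is performed.

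Second, and this is the genuine gap, your plan for~\eqref{eq:optimality_conditions_obstacle_problem_adjoint:2} does not work as stated. Knowing that $\bar\lambda_\gamma$ is supported on the infeasibility set and that this set shrinks does not by itself force $(\bar\lambda_\gamma,\bar p_\gamma)\to 0$, because $\bar\lambda_\gamma$ may be large there; and a ``sign-test argument in the spirit of Lemma~\ref{lem:mu_bound}'' does not produce the pointwise conclusion $\bar p=0$ on $\{\bar\lambda>0\}$. The paper's argument is a direct quantitative estimate that you are missing: using $\maxgamma(x)\lesssim {\maxgamma}'(x)\,x$ from~\eqref{eq:smoothedMax_deriv_relation} one obtains
\[
|(\bar\lambda_\gamma,\bar p_\gamma)| \;\lesssim\; \bigl(\max(0,\psi-\bar u_\gamma),\,\gamma\,{\maxgamma}'(\psi-\bar u_\gamma)\,|\bar p_\gamma|\bigr)
\;\le\; \|\max(0,\psi-\bar u_\gamma)\|_{L^\infty(\Omega)}\,\|\bar\mu_\gamma\|_{L^1(\Omega)},
\]
and the right-hand side is $O(1/\gamma)\cdot O(1)\to 0$ by Lemma~\ref{lem:growth_cond} and Lemma~\ref{lem:mu_bound}. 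This pairing of the $L^\infty$ feasibility-violation bound with the $L^1$ multiplier bound is the key mechanism. The same estimate (with $|\bar p_\gamma|$ in place of $\bar p_\gamma$) then also yields~\eqref{eq:optimality_conditions_obstacle_problem_adjoint:3}, since your pointwise bound via~\eqref{eq:smoothedMax_deriv_relation} gives $|(\bar\mu_\gamma,\psi-\bar u_\gamma)|\lesssim(\bar\lambda_\gamma,|\bar p_\gamma|)$; the tools you list there (Lemma~\ref{lem:growth_cond} and strong convergence of $\bar u_\gamma$) are not sufficient on their own to close this.
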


\begin{proof}
  We note that
  \begin{equation*}
    \begin{aligned}
      - \nabla \cdot \left( \bar{q}_\gamma \nabla \bar{p}_\gamma \right) = -\partial_u r ( \gamma , \bar{u}_\gamma ) \bar{p}_\gamma + \bar{u} - u_d \in L^s(\Omega) \text{ for all } 1 \leq s < 2
    \end{aligned}
  \end{equation*}
  since $\partial_u r(\gamma, \bar{u}), u_d \in L^2(\Omega)$ and $\bar{u}_\gamma , \bar{p}_\gamma \in H^1_0 (\Omega)$.

  We begin considering~\eqref{eq:extended_convergence_regularization_adjoint:1}
  by utilizing~\eqref{eq:convergence_regularization:1}
  and~\eqref{eq:convergence_regularization:3}. Because $\bar{q}_\gamma$
  is convergent in $L^p ( \Omega , \mathbb{R}^2 )$ for all
  $p \in [2, \infty)$ and $\bar{p}_\gamma$ is weakly convergent
  in $W^{1,2} (\Omega)$, we can conclude that the product of these
  weak and strong convergences gives us the weak convergence
  $ \bar{q}_{\gamma} \nabla \bar{p}_{\gamma} \rightharpoonup \bar{q} \nabla \bar{p} $
  for all $1 \leq s < 2$ by, e.g.,~\cite[Lemma 1.2.4]{Allaire:2002}
  since we can always find a $p>2$ large enough so that
  $\bar{q}_\gamma$ converges in $L^p(\Omega)$ and
  $$\frac{1}{s} = \frac{1}{2} + \frac{1}{p}.$$
  Now we consider the convergence of $\bar{\mu}_\gamma$.

  To do so, we use the weak
  convergence of $\bar{q}_\gamma \nabla \bar{p}_\gamma$
  in $L^s \left( \Omega \right)$ for an $1 \leq s < 2$ and the strong
  convergence of $\bar{u}_\gamma$ in $L^2 ( \Omega )$ to get the
  convergence of the sequence~\eqref{eq:extended_convergence_regularization_adjoint:2}, i.e.,
  \begin{equation*}
    \begin{aligned}
      \bar{\mu}_{\gamma} =  \bar{u}_{\gamma} - u^d + \nabla \cdot \left(  \bar{q}_{\gamma} \nabla \bar{p}_{\gamma} \right) \rightharpoonup \bar{u} - u^d + \nabla \cdot \left( \bar{q} \nabla \bar{p} \right) =: \bar{\mu} \text{ in } L^s \left( \Omega \right)
    \end{aligned}
  \end{equation*}
  for all $1 \leq s < 2$ by the convergences given above.
  This also
  shows~\eqref{eq:optimality_conditions_obstacle_problem_adjoint:1},
  noting the additional regularity $\bar{\mu} \in H^{-1}(\Omega)$
  coming from~\eqref{eq:convergence_regularization:3} and the
  definition of $\bar{\mu}$.

  Finally we consider the set of conditions~\eqref{eq:optimality_conditions_obstacle_problem_adjoint:2} and \eqref{eq:optimality_conditions_obstacle_problem_adjoint:3}.

  We start by showing that $\left( \bar{\lambda} , \bar{p} \right) = 0$.
  The strong convergence of $\bar{\lambda}_{\gamma}$ in $H^{-1}\left(
    \Omega \right)$ from Corollary~\ref{crl:extended_convergence_regularization}
  and weak convergence of $\bar{p}_\gamma$ in $H^1_0 \left( \Omega \right)$ yield the convergence
  of the scalar product, see, e.g.,~\cite[Lemma
  1.2.4]{Allaire:2002}. Hence $(p_\gamma,\lambda_\gamma) \rightarrow (\lambda,p)$.
  To compute the value of the limit, we first note that we can utilize the asymptotic behavior of the state $u_\gamma$ with
  \begin{equation}\label{stateEstimate}
    \begin{aligned}
      \| \max (0, \psi - \bar{u}_\gamma) \|_{L^\infty(\Omega)} \lesssim \frac{1}{\gamma} (\| f \|_{L^\infty(\Omega)}+1)
    \end{aligned}
  \end{equation}
  by using Lemma~\ref{lem:growth_cond}.
  This allows us to estimate
  \begin{equation*}
    \begin{aligned}
      |(\lambda_\gamma,p_\gamma)| &= \Big| \bigl( \gamma \maxgamma (\psi - \bar{u}_\gamma), \bar{p}_\gamma \bigr) \Big|\\
      &\lesssim  \bigl( \gamma (\psi - \bar{u}_\gamma) {\maxgamma}' (\psi - \bar{u}_\gamma), |\bar{p}_\gamma| \bigr) \\
      &= \bigl( \max(0, \psi - \bar{u}_\gamma), \gamma {\maxgamma}' (\psi - \bar{u}_\gamma) |\bar{p}_\gamma| \bigr) \\
      &\leq \Big\| \max(0, \psi - \bar{u}_\gamma) \Big\|_{L^{\infty} ( \Omega )} \Big\| \gamma {\maxgamma}' (\psi - \bar{u}_\gamma) \bar{p}_\gamma \Big\|_{L^1(\Omega)}\\
      &\lesssim \frac{1}{\gamma} (\| f \|_{L^{\infty}(\Omega)}+1) \left\| u_\gamma - u_d \right\|_{L^1(\Omega)} \to 0.
    \end{aligned}
  \end{equation*}
  In the first inequality, we utilized $0\le \maxgamma (x) \leq c {\maxgamma}' (x) x$ for
  some $c \ge 1$. Then, we utilized
  $(\psi - \bar{u}_\gamma) = \max(0, \psi - \bar{u}_\gamma)$
  when ${\maxgamma}' (\psi - \bar{u}_\gamma) \not = 0$ and
  for the last inequality we used the bound given in~\eqref{stateEstimate} and the bound on $\mu_\gamma= \gamma {\maxgamma}' (\psi - \bar{u}_\gamma) \bar{p}_\gamma$ from Lemma~\ref{lem:mu_bound} in $L^1 (\Omega)$. Finally, since $u_\gamma- u_d$ is bounded, we get convergence for $\gamma \to \infty$.

  For~\eqref{eq:optimality_conditions_obstacle_problem_adjoint:3}, we use the definition of $\bar{\mu}_{\gamma}$
  in~\eqref{eq:extended_opt_conditions_regularized:4} tested with $( \psi - \bar{u}_{\gamma})$,
  which implies
  \begin{equation*}
    \begin{aligned}
      \left| \left( \bar{\mu}_{\gamma} , \psi - \bar{u}_{\gamma} \right) \right|
      &= \left| \left( \gamma {\maxgamma}' (\psi - \bar{u}_\gamma) \bar{p}_{\gamma},  \psi - \bar{u}_{\gamma} \right) \right| \\
      &\le \left( \gamma {\maxgamma}' (\psi - \bar{u}_\gamma)(\psi-\bar{u}_\gamma), |\bar{p}_{\gamma}| \right) \\
      &\lesssim  \left( \gamma \maxgamma (\psi - \bar{u}_\gamma),
        |\bar{p}_{\gamma}|\right) \\
      &=  ( \bar{\lambda}_\gamma, |\bar{p}_\gamma|) \rightarrow 0,
    \end{aligned}
  \end{equation*}
  noting that the proof
  of~\eqref{eq:optimality_conditions_obstacle_problem_adjoint:2}
  also implies $( \bar{\lambda}_\gamma, |\bar{p}_\gamma|) \rightarrow 0$.
\end{proof}

\bibliographystyle{plain}
\bibliography{bibfile}{}

\begin{thebibliography}{10}

\bibitem{Adams:2003}
R.~A. Adams and J.~J.~F. Fournier.
\newblock {\em Sobolev Spaces}, volume 140 of {\em Pure and Applied Mathematics
  (Amsterdam)}.
\newblock Elsevier/Academic Press, Amsterdam, second edition, 2003.

\bibitem{Allaire:2002}
G.~Allaire.
\newblock {\em Shape Optimization by the Homogenization Method}, volume 146 of
  {\em Applied Mathematical Sciences}.
\newblock Springer New York, New York, NY, 2002.

\bibitem{AsmannRoesch:2013}
U.~Aßmann and A.~Rösch.
\newblock Identification of an unknown parameter function in the main part of
  an elliptic partial differential equation.
\newblock {\em Zeitschrift für Analysis und ihre Anwendung}, 32(2), 2013.

\bibitem{AsmannRoesch:2015}
U.~Aßmann and A.~Rösch.
\newblock Regularization in sobolev spaces with fractional order.
\newblock {\em Numerical Functional Analysis and Optimization}, 36(3), 2015.

\bibitem{Barbu:1984}
V.~Barbu.
\newblock {\em Optimal Control of Variational Inequalities}, volume 100 of {\em
  Research Notes in Mathematics}.
\newblock Pitman, Boston, 1984.

\bibitem{Clarke:1976}
F.~H. Clarke.
\newblock A new approach to lagrange multipliers.
\newblock {\em Mathematics of Operations Research}, 1(2):165--174, 1976.

\bibitem{DeckelnickHinze:2011}
K.~Deckelnick and M.~Hinze.
\newblock Identification of matrix parameters in elliptic {PDEs}.
\newblock {\em Control Cybernet.}, 40(4):957--969, 2011.

\bibitem{DeckelnickHinze:2012}
K.~Deckelnick and M.~Hinze.
\newblock Convergence and error analysis of a numerical method for the
  identification of matrix parameters in elliptic {PDEs}.
\newblock {\em Inverse Problems}, 28(11), 2012.

\bibitem{Groger:1989}
K.~Gröger.
\newblock A {Wl},p-{Estimate} for {Solutions} to {Mixed} {Boundary} {Value}
  {Problems} for {Second} {Order} {Elliptic} {Differential} {Equations}.
\newblock {\em Mathematische Annalen}, 283(4), 1989.

\bibitem{HaslingerKocvaraLeugeringStingl:2010}
J.~Haslinger, M.~Kočvara, G.~Leugering, and M.~Stingl.
\newblock Multidisciplinary free material optimization.
\newblock {\em SIAM Journal on Applied Mathematics}, 70(7):2709--2728, 2010.

\bibitem{P19Report2023}
A.~Hehl, D.~Khimin, I.~Neitzel, N.~Simon, T.~Wick, and W.~Wollner.
\newblock Coefficient control of variational inequalities.
\newblock Preprint 2307.00869, arXiv, 2023.

\bibitem{HerzogMeyerWachsmuth:2011}
R.~Herzog, C.~Meyer, and G.~Wachsmuth.
\newblock Integrability of displacement and stresses in linear and nonlinear
  elasticity with mixed boundary conditions.
\newblock {\em Journal of Mathematical Analysis and Applications}, 382(2),
  2011.

\bibitem{Hintermueller:2001}
M.~Hinterm{\"u}ller.
\newblock Inverse coefficient problems for variational inequalities: Optimality
  conditions and numerical realization.
\newblock {\em ESAIM Mathematical Modelling and Numerical Analysis},
  35(1):129--152, 2001.

\bibitem{HornJohnson:1990}
R.A. Horn and C.R. Johnson.
\newblock {\em Matrix Analysis}.
\newblock Cambridge University Press, 1990.

\bibitem{ItoKunisch:2000}
K.~Ito and K.~Kunisch.
\newblock Optimal control of elliptic variational inequalities.
\newblock {\em Applied Mathematics and Optimization}, 41:343--364, 2000.

\bibitem{KinderlehrerStampacchia:2000}
D.~Kinderlehrer and G.~Stampacchia.
\newblock {\em An introduction to variational inequalities and their
  applications}, volume~31 of {\em Classics in Applied Mathematics}.
\newblock Society for Industrial and Applied Mathematics, Philadelphia, 2000.

\bibitem{KunischWachsmuth:2012a}
K.~Kunisch and D.~Wachsmuth.
\newblock Path-following for optimal control of stationary variational
  inequalities.
\newblock {\em Computational Optimization and Applications}, 51(3):1345--1373,
  2012.

\bibitem{MeyerRademacherWollner:2015}
C.~Meyer, A.~Rademacher, and W.~Wollner.
\newblock Adaptive optimal control of the obstacle problem.
\newblock {\em SIAM Journal on Scientific Computing}, 37(2):A918--A945, 2015.

\bibitem{Mignot:1976}
F.~Mignot.
\newblock Contrôle dans les inequations variationelles elliptiques.
\newblock {\em Journal of Functional Analysis}, 22(2):130--185, 1976.

\bibitem{MuratTartar:2018b}
F.~Murat and L.~Tartar.
\newblock H-convergence.
\newblock In A.~V. Cherkaev and R.~Kohn, editors, {\em Topics in the
  mathematical modelling of composite materials}, Modern Birkhäuser Classics,
  chapter~3. Birkhäuser Cham, 2018.

\bibitem{MuratTartar:2018a}
F.~Murat and L.~Tartar.
\newblock On the control of coefficients in partial differential equations.
\newblock In A.~V. Cherkaev and R.~Kohn, editors, {\em Topics in the
  mathematical modelling of composite materials}, Modern Birkhäuser Classics,
  chapter~1. Birkhäuser Cham, 2018.

\bibitem{NeitzelWickWollner:2017}
I.~Neitzel, T.~Wick, and W.~Wollner.
\newblock An optimal control problem governed by a regularized phase-field
  fracture propagation model.
\newblock {\em SIAM J. Control Optim.}, 55(4):2271--2288, 2017.

\bibitem{NeitzelWickWollner:2019}
I.~Neitzel, T.~Wick, and W.~Wollner.
\newblock An optimal control problem governed by a regularized phase-field
  fracture propagation model. {P}art {II}: The regularization limit.
\newblock {\em SIAM Journal on Control and Optimization}, 57(3):1672--1690,
  2019.

\bibitem{RaulsUlbrich:2019}
A.-T. Rauls and S.~Ulbrich.
\newblock Computation of a {B}ouligand generalized derivative for the solution
  operator of the obstacle problem.
\newblock {\em SIAM J. Control Optim.}, 57(5):3223--3248, 2019.

\bibitem{RaulsUlbrich:2021}
A.-T. Rauls and S.~Ulbrich.
\newblock On the characterization of generalized derivatives for the solution
  operator of the bilateral obstacle problem.
\newblock {\em SIAM J. Control Optim.}, 59(5):3683--3707, 2021.

\bibitem{RenardyRogers:2004}
M.~Renardy and R.~C. Rogers.
\newblock {\em An introduction to partial differential equations}.
\newblock Texts in applied mathematics. Springer New York, New York, 2004.

\bibitem{Rodrigues:1987}
F.~Rodrigues.
\newblock {\em Obstacle Problems in Mathematical Physics}.
\newblock North-Holland Mathematics Studies. Elsevier Science Publishers, 1987.

\bibitem{ScheelScholtes:2000}
H.~Scheel and S.~Scholtes.
\newblock Mathematical programs with complementarity constraints: Stationarity,
  optimality, and sensitivity.
\newblock {\em Mathematics of Operations Research}, 25(1):1--22, 2000.

\bibitem{SchielaWachsmuth:2011}
A.~Schiela and D.~Wachsmuth.
\newblock Convergence analysis of smoothing methods for optimal control of
  stationary variational inequalities.
\newblock {\em ESAIM Math. Model. Numer. Anal.}, 47(3):771--787, 2013.

\bibitem{SimonWollner:2023}
N.~Simon and W.~Wollner.
\newblock First order limiting optimality conditions in the coefficient control
  of an obstacle problem.
\newblock {\em PAMM}, 22(1), 2023.

\bibitem{Tartar:2018}
L.~Tartar.
\newblock Estimation of homoginized coefficients.
\newblock In A.~V. Cherkaev and R.~Kohn, editors, {\em Topics in the
  mathematical modelling of composite materials}, Modern Birkhäuser Classics,
  chapter~1. Birkhäuser Cham, 2018.

\bibitem{Troltzsch:2009}
F.~Tröltzsch.
\newblock {\em Optimale {Steuerung} partieller {Differentialgleichungen}:
  {Theorie}, {Verfahren} und {Anwendungen}}.
\newblock Studium. Vieweg + Teubner, 2., überarb. aufl edition, 2009.

\bibitem{Wachsmuth:2016}
G.~Wachsmuth.
\newblock Towards {M}-stationarity for optimal control of the obstacle problem
  with control constraints.
\newblock {\em SIAM Journal on Control and Optimization}, 54(2):964--986, 2016.

\end{thebibliography}

\end{document}